\DeclareMathAlphabet{\mathpzc}{OT1}{pzc}{m}{it}
\definecolor{verde}{rgb}{0.,0.7,0.}
\definecolor{indigo}{rgb}{.18, .34, .78}
\definecolor{indigo1}{rgb}{.18, .24, .78}
\definecolor{indigo2}{rgb}{.18, .14, .78}
\definecolor{indigo3}{rgb}{.18, 0., .78}
\definecolor{rojo}{rgb}{1,0,0}
\definecolor{negro}{rgb}{0,0,0}
\definecolor{lila}{rgb}{.46, .16, .78}
\definecolor{lila1}{rgb}{.46, .16, .86}
\definecolor{lila2}{rgb}{.56, .16, .86}
	\definecolor{lila3}{rgb}{.63, .16, .78}
\definecolor{lila4}{rgb}{.7, .16, .78}
\definecolor{lila5}{rgb}{.78, .26, .78}
\definecolor{lila6}{rgb}{.6, 0., .78}
\theoremstyle{plain}
\newtheorem{thm}{Theorem}[section]
\newtheorem{lma}[thm]{Lemma}
\newtheorem{cor}[thm]{Corollary}
\newtheorem{defn}[thm]{Definition}
\newtheorem{rem}[thm]{Remark}
\newtheorem{prop}[thm]{Proposition}
\newcommand{\x}{\mbox{-}}
\def\r{{\mathfrak r}}
\newcommand{\s}{\hspace{0,06cm}}
\newcommand{\Ob}{{\mathcal Ob}}
\newcommand{\DblPs}{{\rm DblPs}}
\newcommand{\Fun}{{\rm Fun}}
\newcommand{\Aa}{{\mathbb A}}
\newcommand{\Bb}{{\mathbb B}}
\newcommand{\Cc}{{\mathbb C}}
\newcommand{\Dd}{{\mathbb D}}
\newcommand{\comp}{\circ}
\newcommand{\iso}{\cong}
\newcommand{\ot}{\otimes}
\newcommand{\C}{{\mathcal C}}
\newcommand{\Tau}{{\mathcal T}}
\newcommand{\M}{{\mathcal M}}
\newcommand{\D}{{\mathcal D}}
\newcommand{\A}{{\mathcal A}}
\newcommand{\B}{{\mathcal B}}
\newcommand{\Ll}{{\mathcal L}}
\newcommand{\crta}{\overline}
\newcommand{\Id}{\operatorname {Id}}
\newcommand{\id}{\operatorname {id}}
\newcommand{\Epsilon}{\varepsilon}
\def\Dd{{\mathbb D}}
\def\u#1{\underline{#1}}
\newcommand{\im}{{\rm Im}\,}
\newcommand{\cref}[1]{C.~\ref{c:#1}}
\newcommand{\lelabel}[1]{\label{le:#1}}
\newcommand{\leref}[1]{Lemma~\ref{le:#1}}
\newcommand{\eqlabel}[1]{\label{eq:#1}}
\newcommand{\equref}[1]{(\ref{eq:#1})}
\newcommand{\delabel}[1]{\label{de:#1}}
\newcommand{\deref}[1]{Definition~\ref{de:#1}}
\newcommand{\prlabel}[1]{\label{pr:#1}}
\newcommand{\prref}[1]{Proposition~\ref{pr:#1}}
\newcommand{\colabel}[1]{\label{co:#1}}
\newcommand{\coref}[1]{Corollary~\ref{co:#1}}
\newcommand{\rmlabel}[1]{\label{rm:#1}}
\newcommand{\rmref}[1]{Remark~\ref{rm:#1}}
\newcommand{\selabel}[1]{\label{se:#1}}
\newcommand{\seref}[1]{Section~\ref{se:#1}}
\newcommand{\sslabel}[1]{\label{ss:#1}}
\newcommand{\ssref}[1]{Subsection~\ref{ss:#1}}
\newcommand\tripplearrow{%
        \mathrel{\vcenter{\mathsurround0pt
                \ialign{##\crcr
                        \noalign{\nointerlineskip}$\longrightarrow$\crcr
                        \noalign{\nointerlineskip}$\longleftarrow$\crcr
                        \noalign{\nointerlineskip}$\longrightarrow$\crcr
                }%
        }}%
}
\newcommand\triarrows{%
        \mathrel{\vcenter{\mathsurround0pt
                \ialign{##\crcr
                        \noalign{\nointerlineskip}$\longrightarrow$\crcr
                        \noalign{\nointerlineskip}$\longrightarrow$\crcr
                        \noalign{\nointerlineskip}$\longrightarrow$\crcr
                }%
        }}%
}
\newcommand*{\threefrac}[3]{%
  \begin{array}{@{\,}c@{\,}}%
    #1\\
    \hline
    #2\\
    \hline
    #3%
  \end{array}%
}
\newcommand*{\fourfrac}[4]{%
 \begin{array}{@{\,}c@{\,}c@{\,}}%
    #1\\
    \hline
    #2\\
    \hline
    #3\\
    \hline
		#4%
  \end{array}%
	}
\newcommand*{\fivefrac}[5]{%
 \begin{array}{@{\,}c@{\,}c@{\,}c@{\,}}%
    #1\\
    \hline
    #2\\
    \hline
    #3\\
    \hline
		#4\\
    \hline
		#5%
  \end{array}%
	}
\newcommand{\fit}[3]{\ar@{:=}@/{#3}/[#1] |{\Downarrow #2} }
\begin{document}

\title{Alternative notion to intercategories: part I. \\
A tricategory of double categories
}
\author{Bojana Femi\'c \vspace{6pt} \\
{\small Mathematical Institite of  \vspace{-2pt}}\\
{\small Serbian Academy of Sciences and Arts } \vspace{-2pt}\\
{\small  Knez Mihailova 35,} \vspace{-2pt}\\
{\small  11 000 Belgrade, Serbia}}

\date{}
\maketitle


\begin{center}
{\bf Dedicated to Gabi B\"ohm}
\end{center}

\smallskip

\begin{abstract}
This is a first of a series of two papers. Our motive is to tackle the question raised in B\"ohm's 
``The Gray Monoidal Product of Double Categories'' from Applied Categorical Structures: which would be 
an alternative notion to intercategories of Grandis and Par\'e, so that monoids in B\"ohm's monoidal category $Dbl$ of 
strict double categories and double pseudo functors be an example of it? Before addressing this question we 
observe that although bicategories embed into pseudo double categories, this embedding is not monoidal, with the usual notion of a 
monoidal pseudo double category. We then prove that monoidal bicategories embed into the mentioned monoids of B\"ohm. 
%
In order to fit B\"ohm's monoid into an intercategory-type object, we start by upgrading the category $Dbl$ to a 2-category 
and end up rather with a tricategory $\DblPs$. We propose an alternative definition of intercategories as internal 
categories in this tricategory, enabling $Dbl$ to be an example of this gadget. The formal definition of a category internal to 
the (type of a) tricategory (of) $\DblPs$, as well as another important example of these in the literature, we leave for a subsequent paper. 
\end{abstract}

{\em Keywords}: double category, internal category, intercategory, tricategory, Gray monoidal category. 

{\em AMS Classification codes}: 18D40, 18N10, 18N20.

\section{Introduction}

It is well-known that 2-categories embed in strict double categories and that bicategories embed in pseudo double categories. 
However, it is not clear which of the definitions of a monoidal pseudo double category existent in the literature 
would be suitable to have the analogous result, that monoidal bicategories embed to monoidal pseudo double categories. 
This question we resolve in \ssref{mon embed}. Namely, seeing a monoidal bicategory as a one object tricategory, 
we consider the equivalent one object Gray 3-category (by the coherence of tricategories of \cite{GPS}), which is nothing but a monoid in the 
monoidal category $Gray$, {\em i.e.} a Gray monoid (see \cite{DS}, \cite[Lemma 4]{BN}). 
We then prove that $Gray$ embeds as a monoidal category in the monoidal category $Dbl$ from \cite{Gabi} of strict double categories and 
double functors which are given by isomorphisms in both directions. The latter functors are later introduced in \cite{Shul1} under the name 
{\em double pseudo functors}. 
For the above-mentioned embedding we give an explicit description of the monoidal structure of $Dbl$
(in \cite{Gabi} only an explicit description of the structure of a monoid is given). Analogously as in \cite{Gray}, 
we introduce a {\em cubical double functor} along the way. We also show why the other notions of monoidal double categories 
(monoids in the category of strict double categories and strict double functors from \cite{BMM},  
and pseudomonoids in the 2-category $PsDbl$ of pseudo double categories, pseudo double functors and vertical transformations from 
\cite{Shul}) do not obey the embedding in question. 


Then we turn to the following question of B\"ohm: if a monoid in her monoidal category $Dbl$ could fit some framework similar to 
intercategories of Grandis and Par\'e. Observe that neither of the two notions would be more general than the other. Namely, 
intercategories are categories internal in the 2-category $LxDbl$ of pseudo double categories, lax double functors and horizontal 
transformations, whereas in the structure of B\"ohm's monoid in $Dbl$ the relevant objects are {\em strict}  double categories and 
the functors are {\em double pseudo functors} in the sense of \cite{Shul1}. By Strictification Theorem of \cite[Section 7.5]{GP:Limits} 
every pseudodouble category is equivalent to a strict double category, thus on the level of objects in the ambient category basically 
nothing is changed. Though, going from lax double functors to double pseudo functors, one tightens in one direction and weakens in the other. 

In the search for a desired framework, we define 2-cells among 
double pseudo functors and we get that instead of a 2-category, we indeed have a tricategory structure, including modifications 
as 3-cells. This led us to propose an alternative notion for intercategories, as categories internal in this tricategory of strict double categories, which we denote by $\DblPs$. B\"ohm's monoids are example of this new gadget, as well as most of the examples of intercategories treated in \cite{GP:Fram}, except from duoidal categories, which rely on lax functors, rather than pseudo ones. 
Internal categories in this kind of tricategories we plan to develop in a subsequent paper. 

Contrarily to $LxDbl$, the 1-cells of the 2-category $PsDbl$ 
are particular cases of double pseudo functors. Having in mind the above Strictification Theorem and adding only the trivial 
3-cells to $PsDbl$, we also prove that thus obtained tricategory $PsDbl^*_3$ embeds in our tricategory $\DblPs$. 
As a byproduct to this proof we obtain a more general result: supposing that there is a connection (\cite{BS}) on 1v-components of 
strong vertical transformations (\cite[Section 7.4]{GP:Limits}), there is a bijection between 
strong vertical transformations and those strong horizontal transformations whose 1h-cell components are 1h-companions of some 1v-cells.
This we prove in \coref{bij}.

\bigskip 

The composition of the paper is the following. In Section 2 we recall the definition of a tricategory and give a detailed description of 
it; in \seref{mon} we 
give a description of the monoidal structure of $Dbl$ of B\"ohm, we define cubical double functors and prove that $Gray$ embeds 
into $Dbl$. Section 4 is dedicated to the construction of our tricategory $\DblPs$ (it occupies more than a half of the paper). 
In the last Section we prove that the tricategory $PsDbl^*_3$ embeds into $\DblPs$ and prove the bijection between vertical and horizontal 
strong transformation, supposing the mentioned connection. 

\bigskip

\section{Tricategories} \selabel{tricat}

Because of the complexity of its structure for a broader audience we unpack step-by-step the definition of a tricategory introduced in 
\cite{GPS}. The readers familiar with tricategories can skip this whole section. It has been proved in {\em. loc.cit.} 
that a tricategory is equivalent to a Gray-category, 
which is a semistrict version of construction between a 3-category and a tricategory. Gray-categories in turn were introduced in \cite{Gray} 
as categories enriched over the monoidal category {\bf Gray} of 2-categories and 2-functors with a particular tensor product 
constructed as an adjoint functor to the inner hom-functor $2\x\Fun(\A,-)$ for 2-categories, where $\A$ is a 2-category. This construction 
and the necessity of it are nicely explained for example in \cite[Section 5.1]{GP:Fram}. We summarize now the definition of a tricategory from \cite{GPS} 
with slight changes (in the direction of $r$ in (TD6) and accordingly in $\mu$ and $\rho$ in (TD8) ). 

A tricategory $\Tau$ consists of the following data (TDi) and axioms (TAj): 

(TD1) a set $\Ob\Tau$ of objects of $\Tau$; 

(TD2) a bicategory $\Tau(p,q)$ for objects $p,q$ of $\Tau$; 

(TD3) a pseudofunctor $\ot: \Tau(q,r)\times\Tau(p,q)\to\Tau(p,r)$, for $p,q,r\in\Ob\Tau$, called composition; 

(TD4) a pseudofunctor $I_p: 1\to\Tau(p,p)$, for $p\in\Ob\Tau$, where $1$ is the unit bicategory; 

(TD5) a pseudo natural equivalence $a: \ot\comp(\ot\times 1) \Rightarrow \ot\comp(1\times\ot)$, where the respective pseudofunctors 
act between bicategories $\Tau(r,s)\times\Tau(q,r)\times\Tau(p,q)\to\Tau(p,s)$, for $p,q,r,s\in\Ob\Tau$; 

(TD6) pseudo natural equivalences $l: \ot\comp(I_q\times\Id_{\Tau(p,q)})\to\Id_{\Tau(p,q)}$ and $r: \ot\comp(\Id_{\Tau(p,q)}\times I_p) \to \Id_{\Tau(p,q)}$ 
for objects $p,q$ in $\Tau$; 

(TD7) an invertible modification $\pi$ up to which the pentagon for $a$ commutes; 

(TD8) invertible modifications $\mu, \lambda$ and $\rho$ 
relating $a$ with $l$ and $r$, then $a$ with $l$ and $a$ with $r$, respectively;

(TA1) non abelian 4-cocycle condition for $\pi$; 

(TA2) left normalization for the 4-cocycle $\pi$, and 

(TA3) right normalization for the 4-cocycle $\pi$. 

A 3-category consists of the data (TD1) - (TD6), where $a,l$ and $r$ from (TD5) and (TD6) are identities, and 
with the following changes in the items (TD2) - (TD4): $\Tau(p,q)$'s are 2-categories 
and $\ot$ and $I_p$'s are 2-functors ($a, l$ and $r$ are identities). 

\medskip

We unpack now the conditions (TD2) -- (TD8). Observe that in (TD5-TD8) we give a description more detailed than in 
\cite[Section 4.4]{Gur-book}.


\u{(TD2):} That $\Ll:=\Tau(p,q)$ is a bicategory for every $p,q\in\Ob\Tau$ comprises the following items: 
\begin{enumerate} [1)]
\item 1-cells $A,B...$ acting from $p$ to $q$ (which we will write as horizontal simple arrows, straight or arched $p\stackrel{A}{\to}q$, 
$\xymatrix@C=8pt{ p \ar@/^1pc/[rr]^{B}\ar@{}[rr]| {}  &  &  q} $); 
\item 2-cells $F,G...$ (we will denote them by a double arrow in the vertical direction: 
$\xymatrix@C=8pt{ p \ar@/^1pc/[rr]^{A}\ar@{}[rr]| {\Downarrow F} \ar@/_1pc/[rr]_{ A'} &  & q }$, or in the form of a rectangular diagram 
whose vertical arrows are identities), 
3-cells $\alpha, \beta...$ (we will think them in the direction perpendicular to the plane of the paper, in the transversal direction, 
and will denote them by a triple arrow ...);

a strictly associative {\em transversal composition} of 3-cells denoted by $\cdot$, 
and an identity 3-cell $\Id_F$ (which is the strict unit for $\cdot$);
\item {\em vertical composition} of 2- and 3-cells denoted by $\odot$ 
such that $(a)$ $\Id_G\odot\Id_F=\Id_{G\odot F}$ and $(b)$ $(\beta'\cdot\beta)\odot(\alpha'\cdot\alpha)=(\beta'\odot\alpha')\cdot
(\beta\odot\alpha)$; 
\item for each 1-cell $A$ of $\Tau$ the identity 2-cell $\id_A$; 
\item associativity isomorphism 3-cell $\upalpha: (H\odot G)\odot F \Rrightarrow H\odot(G\odot F)$ for the vertical composition of 2-cells,
natural in them; 
\item left and right unity isomorphism 3-cells $\lambda_F: \id_B\odot F\Rrightarrow F$ and $\rho_F: F\odot\id_A\Rrightarrow F$, natural in any 2-cell 
$A\stackrel{F}{\Rightarrow} B$; 
\item the pentagon constraint for $\upalpha$ and triangle constraint for $\upalpha\x\lambda\x\rho$ commute;. 
\end{enumerate}
* (by the coherence Theorem 1.5 of \cite{GPS} the items 5)-7) above can be ignored); 
\bigskip


\u{(TD3):} That $\ot: \Tau(q,r)\times\Tau(p,q)\to\Tau(p,r)$ is a pseudofunctor, for every $p,q,r\in\Ob\Tau$, it comprises the following items: 
\begin{enumerate} [1)]
\item for 1-cells $p \stackrel{A}{\to} q \stackrel{B}{\to} r$ there is a composition 1-cell $p \stackrel{B\ot A}{\to} r$; 
\item {\em horizontal composition} of 2- and 3-cells denoted by $\ot$ such that $(a)$ $\Id_G\ot\Id_F=\Id_{G\ot F}$ and 
$(b)$ $(\beta'\odot\beta)\ot(\alpha'\odot\alpha)=(\beta'\ot\alpha')\odot(\beta\ot\alpha)$ (concretely, 
for two 2-cells $F,G$ between two composable pairs of 1-cells 
$
\xymatrix@C=8pt{ p \ar@/^1pc/[rr]^{A}\ar@{}[rr]| {\Downarrow F} \ar@/_1pc/[rr]_{ A'} &  & q 
\ar@/^1pc/[rr]^{B}\ar@{}[rr]|{\Downarrow G} \ar@/_1pc/[rr]_{ B'} &  & r} 
$
there is a horizontal composition 2-cell 
$\xymatrix{
p 
{\ar @{} [rr] |{\Downarrow F\ot G}} 
\ar@/^1.3pc/[rr]^{A\ot B} 
\ar@/_1.3pc/[rr]_{A'\ot B'}
&& r
}$ and for two 3-cells $\alpha, \beta$ between two horizontally composable pairs of 2-cells 
$
\xymatrix@C=8pt{ p \ar@/^1pc/[rr]^{A}\ar@{}[rr]| {\Downarrow F} \ar@/_1pc/[rr]_{ A'} &  & q 
\ar@/^1pc/[rr]^{B}\ar@{}[rr]|{\Downarrow G} \ar@/_1pc/[rr]_{ B'} &  & r} 
$ and 
$
\xymatrix@C=8pt{ p \ar@/^1pc/[rr]^{A}\ar@{}[rr]| {\Downarrow F'} \ar@/_1pc/[rr]_{ A'} &  & q 
\ar@/^1pc/[rr]^{B}\ar@{}[rr]|{\Downarrow G'} \ar@/_1pc/[rr]_{ B'} &  & r} 
$
there is a horizontal composition 3-cell $\beta\ot\alpha: G\ot F\Rrightarrow G'\ot F\s'$); 
\item for 2-cells in $\Tau$ composable pairwise horizontally and vertically: 
\[
\xymatrix{ 
p \ar@{:=}@/^1pc/[rr] |{\Downarrow F} \ar@/^{2pc}/[rr]^{A}  \ar[rr] 
  \ar@{:=}@/_1pc/[rr] |{\Downarrow F\s'}  \ar@/_{2pc}/[rr]_{A''}  &&
	q \ar@{:=}@/^1pc/[rr] |{\Downarrow G} \ar@/^{2pc}/[rr]^{B}  \ar[rr] 
  \ar@{:=}@/_1pc/[rr] |{\Downarrow G}  \ar@/_{2pc}/[rr]_{B''} && r
}\]
there are natural isomorphism 3-cells 
$$\xi: (G'\ot F\s') \odot (G\ot F) \Rrightarrow (G'\odot G)\ot(F\s'\odot F)$$ 
$$\xi_0: \id_{B\ot A} \Rrightarrow \id_B \ot \id_A, $$
so that the corresponding hexagonal constraint for $\xi$, the square for $\xi\x\xi_0\x\lambda$ and the square for $\xi\x\xi_0\x\rho$ 
commute (these will be ignored by the coherence Theorem 1.6 of \cite{GPS}); 
\end{enumerate}


(TD4) states that for each 0-cell $p$ in $\Tau$ there is a 1-cell $I_p$ and a 2-cell $\iota_P:I_p\to I_p$ so that there is an isomorphism 
3-cell $Id_{I_p}\iso \iota_p$;  

(TD5) states that we have associativity of $\ot$ on the levels of 1-, 2- and 3-cells; 

\noindent for three horizontally composable 1-cells $C,B,A$ there are 2-cells $a_{C,B,A}: (C\ot B)\ot A\Rightarrow C\ot(B\ot A)$ and its quasi-inverse $a'_{C',B',A'}$ 
so that there are invertible 3-cells $a_{C,B,A}\odot a_{C,B,A}'\Rrightarrow\Id_{(C\ot B)\ot A}$ and $\Id_{C\ot(B\ot A)}\Rrightarrow a_{C,B,A}'\odot a_{C,B,A}$; 

\noindent for each pair of triples of composable 1-cells $C,B,A$ and $C',B',A'$ and three 2-cells acting between them $H,G,F$, there are invertible 
3-cells (natural in $H,G,F$): 
$$a_{C',B',A'}\odot\big((H\ot G)\ot F\big) \stackrel{a_{H,G,F}}{\Rrightarrow} \big(H\ot (G\ot F)\big) \odot a_{C,B,A}$$
$$a_{C',B',A'}'\odot \big(H\ot (G\ot F)\big) \stackrel{a'_{H,G,F}}{\Rrightarrow} \big((H\ot G)\ot F\big)\odot a_{C,B,A}';$$

\noindent 
for three horizontally composable 3-cells $\gamma, \beta, \alpha$ the following diagram of (the transversal composition of) 3-cells commutes:
$$
\bfig
 \putmorphism(-150,400)(1,0)[p`q `A]{360}1a
\putmorphism(380,400)(1,0)[q`r`B]{360}1a
 \putmorphism(740,400)(1,0)[`s `C]{360}1a

\putmorphism(-150,420)(0,-1)[\phantom{Y_2}`p `=]{330}1l
\putmorphism(1090,420)(0,-1)[\phantom{Y_2}`s`=]{330}1l

 \putmorphism(-130,100)(1,0)[`q`A]{360}1b
 \putmorphism(230,100)(1,0)[`r`B]{360}1b
 \putmorphism(730,100)(1,0)[r``C]{360}1b
\put(350,230){\fbox{$a_{C,B,A}$}}

\putmorphism(-150,100)(0,-1)[\phantom{Y_2}``=]{380}1l
\putmorphism(230,100)(0,-1)[q` `=]{380}1l
\putmorphism(580,100)(0,-1)[\phantom{Y_2}``=]{380}1l
\putmorphism(720,100)(0,-1)[\phantom{Y_2}``=]{380}1l
\putmorphism(1090,100)(0,-1)[\phantom{Y_2}``=]{380}1l

\putmorphism(-150,-260)(1,0)[p`q `A']{360}1b
\putmorphism(230,-260)(1,0)[`r `B']{360}1b
 \putmorphism(730,-260)(1,0)[r`s`C']{360}1b

\put(-40,-120){\fbox{$F$}}
\put(310,-120){\fbox{$G$}}
\put(820,-120){\fbox{$H$}}

\putmorphism(100,-570)(1,0)[` `\Downarrow]{360}0a 
\putmorphism(550,-570)(1,0)[` `\big(\gamma\ot(\beta\ot\alpha)\big)\odot\Id]{360}0a

\efig
\quad\quad\stackrel{a_{H,G,F}}{\Rrightarrow}\quad
\bfig
 \putmorphism(-150,400)(1,0)[p`q `A]{360}1a
\putmorphism(380,400)(1,0)[q`r`B]{360}1a
 \putmorphism(740,400)(1,0)[`s `C]{360}1a

\putmorphism(-150,420)(0,-1)[\phantom{Y_2}`p `=]{380}1l
\putmorphism(210,420)(0,-1)[` `=]{380}1l
\putmorphism(370,420)(0,-1)[\phantom{Y_2}``=]{380}1l
\putmorphism(720,420)(0,-1)[\phantom{Y_2}``=]{380}1l
\putmorphism(1090,420)(0,-1)[\phantom{Y_2}`s`=]{380}1l

 \putmorphism(-130,50)(1,0)[`q`A']{360}1a
 \putmorphism(380,50)(1,0)[q``B']{360}1a
 \putmorphism(730,50)(1,0)[r``C']{360}1a
\put(350,-120){\fbox{$a_{C',B',A'}$}}

\putmorphism(-150,100)(0,-1)[\phantom{Y_2}``=]{380}1l
\putmorphism(1090,100)(0,-1)[\phantom{Y_2}``=]{380}1l

\putmorphism(-150,-260)(1,0)[p`q `A']{360}1b
\putmorphism(230,-260)(1,0)[`r `B']{360}1b
 \putmorphism(730,-260)(1,0)[r`s`C']{360}1b

\put(-40,210){\fbox{$F$}}
\put(450,210){\fbox{$G$}}
\put(820,210){\fbox{$H$}}

\putmorphism(-50,-570)(1,0)[` `\Downarrow]{360}0a 
\putmorphism(400,-570)(1,0)[` `\Id\odot\big((\gamma\ot\beta)\ot\alpha\big)]{360}0a

\efig
$$
\vspace{-0,1cm}
$$
\bfig
 \putmorphism(-150,500)(1,0)[p`q `A]{360}1a
\putmorphism(380,500)(1,0)[q`r`B]{360}1a
 \putmorphism(740,500)(1,0)[`s `C]{360}1a

\putmorphism(-150,520)(0,-1)[\phantom{Y_2}`p `=]{330}1l
\putmorphism(1090,520)(0,-1)[\phantom{Y_2}`s`=]{330}1l

 \putmorphism(-130,200)(1,0)[`q`A]{360}1b
 \putmorphism(230,200)(1,0)[`r`B]{360}1b
 \putmorphism(730,200)(1,0)[r``C]{360}1b
\put(350,330){\fbox{$a_{C,B,A}$}}

\putmorphism(-150,200)(0,-1)[\phantom{Y_2}``=]{380}1l
\putmorphism(230,200)(0,-1)[q` `=]{380}1l
\putmorphism(580,200)(0,-1)[\phantom{Y_2}``=]{380}1l
\putmorphism(720,200)(0,-1)[\phantom{Y_2}``=]{380}1l
\putmorphism(1090,200)(0,-1)[\phantom{Y_2}``=]{380}1l

\putmorphism(-150,-160)(1,0)[p`q `A']{360}1b
\putmorphism(230,-160)(1,0)[`r `B']{360}1b
 \putmorphism(730,-160)(1,0)[r`s`C']{360}1b

\put(-60,-20){\fbox{$F\s'$}}
\put(290,-20){\fbox{$G'$}}
\put(800,-20){\fbox{$H'$}}

\efig
\quad\stackrel{a_{H',G',F\s'}}{\Rrightarrow}\quad
\bfig
 \putmorphism(-150,500)(1,0)[p`q `A]{360}1a
\putmorphism(380,500)(1,0)[q`r`B]{360}1a
 \putmorphism(740,500)(1,0)[`s `C]{360}1a

\putmorphism(-150,520)(0,-1)[\phantom{Y_2}`p `=]{380}1l
\putmorphism(210,520)(0,-1)[` `=]{380}1l
\putmorphism(370,520)(0,-1)[\phantom{Y_2}``=]{380}1l
\putmorphism(720,520)(0,-1)[\phantom{Y_2}``=]{380}1l
\putmorphism(1090,520)(0,-1)[\phantom{Y_2}`s`=]{380}1l

 \putmorphism(-130,150)(1,0)[`q`A']{360}1a
 \putmorphism(380,150)(1,0)[q``B']{360}1a
 \putmorphism(730,150)(1,0)[r``C']{360}1a
\put(350,-20){\fbox{$a_{C',B',A'}$}}

\putmorphism(-150,200)(0,-1)[\phantom{Y_2}``=]{380}1l
\putmorphism(1090,200)(0,-1)[\phantom{Y_2}``=]{380}1l

\putmorphism(-150,-160)(1,0)[p`q `A']{360}1b
\putmorphism(230,-160)(1,0)[`r `B']{360}1b
 \putmorphism(730,-160)(1,0)[r`s`C']{360}1b

\put(-60,310){\fbox{$F\s'$}}
\put(430,310){\fbox{$G'$}}
\put(800,310){\fbox{$H'$}}

\efig
$$
(in terms of equation: $a_{H',G',F\s'} \cdot \big(\big(\gamma\ot(\beta\ot\alpha)\big)\odot\Id\big)
 = \big(\Id\odot\big((\gamma\ot\beta)\ot\alpha\big)\big) \cdot a_{H,G,F}$ \quad); 

(TD6) gives unity laws for $\ot$; \\
for a 1-cell $A:p\to q$ there are 2-cells $l_A: I_q\ot A\Rightarrow A$ and $r_A: A\ot I_p\Rightarrow A$ and their quasi-inverses 
$l_A'$ and $r_A'$ so that there are invertible 3-cells $l_A\odot l_A'\Rrightarrow \id_A, \hspace{0,2cm} \id_{I_q\ot A}\Rrightarrow l_A'\odot l_A, 
\hspace{0,2cm} r_A\odot r_A'\Rrightarrow \id_A, \hspace{0,2cm} \id_{A\ot I_p}\Rrightarrow l_A'\odot l_A$; \\
moreover, for any 2-cell $F:A\Rightarrow B$ 
there are invertible 3-cells (natural in $F$): 
$$l_B\odot(\Id_{I_q}\ot F)\stackrel{l_F}{\Rrightarrow} F\odot l_A, \quad l_B'\odot F\stackrel{l_F'}{\Rrightarrow} (\Id_{I_q}\ot F)\odot l'_A \hspace{0,14cm}, $$
$$r_B\odot(F\ot\Id_{I_p})\stackrel{r_F}{\Rrightarrow} F\odot r_A, \quad r_B'\odot F\stackrel{r_F'}{\Rrightarrow} (F\ot\Id_{I_p})\odot r'_A;$$
for a 3-cell $\alpha:F\Rrightarrow G$ one has the identities: 
$$l_G\cdot\big(\Id_{l_B}\odot(\Id_{\Id_{I_q}}\ot \alpha)\big)=(\alpha\odot\Id_{l_A})\cdot l_F$$
$$r_G\cdot\big(\Id_{r_B}\odot(\alpha\ot\Id_{\Id_{I_p}})\big)=(\alpha\odot\Id_{r_A})\cdot r_F;$$

(TD7) for every four composable 1-cells $D,C,B,A$ there is an invertible 3-cell 
$$
\bfig
 \putmorphism(-510,400)(1,0)[` `A]{360}1a
 \putmorphism(-150,400)(1,0)[(` `B]{360}1a
\putmorphism(280,400)(1,0)[``C]{360}1a
 \putmorphism(620,400)(1,0)[` )`D]{360}1a

\putmorphism(-490,420)(0,-1)[\phantom{Y_2}` `=]{330}1l
\putmorphism(950,420)(0,-1)[\phantom{Y_2}``=]{330}1l

  \putmorphism(-510,100)(1,0)[` `A]{360}1b
\putmorphism(-130,100)(1,0)[(``B]{360}1b
 \putmorphism(200,100)(1,0)[``C]{360}1b
 \putmorphism(620,100)(1,0)[`)`D]{360}1b
\put(-50,230){\fbox{$a_{D,C,B}\ot id_A$}}
\putmorphism(-490,100)(0,-1)[\phantom{Y_2}` `=]{330}1l
\putmorphism(950,100)(0,-1)[\phantom{Y_2}``=]{330}1l

 \putmorphism(-510,-200)(1,0)[(` `A]{360}1b
\putmorphism(-80,-200)(1,0)[``B]{360}1b
 \putmorphism(260,-200)(1,0)[` )`C]{360}1b
 \putmorphism(620,-200)(1,0)[``D]{360}1b

\putmorphism(-490,-200)(0,-1)[\phantom{Y_2}` `=]{330}1l
\putmorphism(950,-200)(0,-1)[\phantom{Y_2}``=]{330}1l
\put(150,-90){\fbox{$a_{D,CB,A}$}}

\put(-50,-400){\fbox{$id_D\ot a_{C,B,A}$}}

 \putmorphism(-510,-500)(1,0)[(` `A]{360}1b  
\putmorphism(-180,-500)(1,0)[``B]{360}1b 
 \putmorphism(240,-500)(1,0)[` )`C]{360}1b
 \putmorphism(620,-500)(1,0)[``D]{360}1b
\efig
\quad
\stackrel{\pi}{\Rrightarrow}
\quad
\bfig
 \putmorphism(-510,400)(1,0)[` `A]{360}1a
 \putmorphism(-150,400)(1,0)[(` `B]{360}1a
\putmorphism(280,400)(1,0)[``C]{360}1a
 \putmorphism(620,400)(1,0)[` )`D]{360}1a

\putmorphism(-490,420)(0,-1)[\phantom{Y_2}` `=]{330}1l
\putmorphism(950,420)(0,-1)[\phantom{Y_2}``=]{330}1l
\put(70,260){\fbox{$a_{DC,B,A}$}}

 \putmorphism(-510,100)(1,0)[` `A]{360}1a
 \putmorphism(-150,100)(1,0)[` `B]{360}1a
\putmorphism(280,100)(1,0)[(``C]{360}1a
 \putmorphism(620,100)(1,0)[` )`D]{360}1a

\putmorphism(-490,120)(0,-1)[\phantom{Y_2}` `=]{330}1l
\putmorphism(950,120)(0,-1)[\phantom{Y_2}``=]{330}1l
\put(-70,-60){\fbox{$a_{D,C,BA}$}}

 \putmorphism(-510,-200)(1,0)[(` `A]{360}1b  
\putmorphism(-180,-200)(1,0)[``B]{360}1b 
 \putmorphism(240,-200)(1,0)[` )`C]{360}1b
 \putmorphism(620,-200)(1,0)[``D.]{360}1b

\efig
$$ 
so that for four horizontally composable 2-cells
$
\xymatrix@C=8pt{ 
 \ar@/^1pc/[rr]^{A}\ar@{}[rr]| {\Downarrow F} \ar@/_1pc/[rr]_{ A'} &  &  
\ar@/^1pc/[rr]^{B}\ar@{}[rr]|{\Downarrow G} \ar@/_1pc/[rr]_{ B'} &  &   
\ar@/^1pc/[rr]^{C}\ar@{}[rr]|{\Downarrow H} \ar@/_1pc/[rr]_{ C'} &  &   
\ar@/^1pc/[rr]^{D}\ar@{}[rr]|{\Downarrow J} \ar@/_1pc/[rr]_{ D'} &  & }  
$ the following two transversal compositions of 3-cells coincide: \\
$$
\fourfrac{a\ot\id}{a_{\bullet,\bullet\bullet,\bullet}}{\id\ot a}{J\ot(H\ot(G\ot F))} 
\stackrel{\displaystyle{\frac{\pi}{\Id}}}{\Rrightarrow}
\threefrac{a_{\bullet\bullet,\bullet,\bullet}}{a_{\bullet,\bullet,\bullet\bullet}}{J\ot(H\ot(G\ot F))} 
\stackrel{ \displaystyle{\frac{\Id}{a_{J,H,GF}} }}{\Rrightarrow}
\threefrac{a_{\bullet\bullet,\bullet,\bullet}}{(J\ot H)\ot(G\ot F)}{a_{\bullet,\bullet,\bullet\bullet}} 
\stackrel{ \displaystyle{\frac{a_{JH,G,F}}{\Id}} }{\Rrightarrow}
\threefrac{((J\ot H)\ot G)\ot F}{a_{\bullet\bullet,\bullet,\bullet}}{a_{\bullet,\bullet,\bullet\bullet}} 
$$
$$\Downarrow \displaystyle{\threefrac{\Id}{\Id}{id\ot a}} \hspace{11,4cm} \interleave$$
$
\fourfrac{a\ot\id}{a_{\bullet,\bullet\bullet,\bullet}}{J\ot((H\ot G)\ot F)} {\id\ot a}
\stackrel{\displaystyle{\threefrac{\Id}{a_{\bullet,\bullet\bullet,\bullet}}{\Id}}}{\Rrightarrow}
\fourfrac{a\ot\id}{(J\ot(H\ot G))\ot F} {a_{\bullet,\bullet\bullet,\bullet}}{\id\ot a}
\stackrel{\displaystyle{ \threefrac{\Id}{\Id}{\pi* }}}{\Rrightarrow}
\fivefrac{a\ot\id}{(J\ot(H\ot G))\ot F}{a^{-1}\ot\id} {a_{\bullet\bullet,\bullet,\bullet}}{a_{\bullet,\bullet,\bullet\bullet}}
\stackrel{ \displaystyle{ \frac{a_{J,H,G}\ot\id}{\Id_{\equiv}} }}{\Rrightarrow}
\fivefrac{((J\ot H)\ot G)\ot F} {a\ot\id}{a^{-1}\ot\id} {a_{\bullet\bullet,\bullet,\bullet}}{a_{\bullet,\bullet,\bullet\bullet}} 
$\\
where the fractions denote vertical compositions of both 2- and 3-cells and the 2-cells $a_{\bullet,\bullet,\bullet}$ are 
evaluated at 1-cells $A,B,C,D$; 

(TD8) for composable 1-cells $p\stackrel{A}{\to}q\stackrel{B}{\to}r$ there exist 3-cells: 
$$
\bfig
 \putmorphism(-510,150)(1,0)[` `A]{360}1a
 \putmorphism(-150,150)(1,0)[(` `\Id]{360}1a
\putmorphism(210,150)(1,0)[`)`B]{360}1a

\putmorphism(-490,170)(0,-1)[\phantom{Y_2}` `=]{330}1l
\putmorphism(570,170)(0,-1)[\phantom{Y_2}``=]{330}1l
\put(-180,-20){\fbox{$r_B\ot\id$}}

 \putmorphism(-510,-150)(1,0)[` `A]{540}1a
 \putmorphism(20,-150)(1,0)[` `B]{540}1a
\efig
\quad\stackrel{\displaystyle{\mu_{B,A}}}{\Rrightarrow}
\bfig
 \putmorphism(-510,300)(1,0)[` `A]{360}1a
 \putmorphism(-150,300)(1,0)[(` `\Id]{360}1a
\putmorphism(210,300)(1,0)[`)`B]{360}1a

\putmorphism(-490,320)(0,-1)[\phantom{Y_2}` `=]{330}1l
\putmorphism(570,320)(0,-1)[\phantom{Y_2}``=]{330}1l
\put(-70,140){\fbox{$a_{B,\Id,A}$}}

 \putmorphism(-490,0)(1,0)[(` `A]{360}1b  
\putmorphism(-160,0)(1,0)[`)`\Id]{360}1b 
 \putmorphism(200,0)(1,0)[` `B]{360}1b

\putmorphism(-490,20)(0,-1)[\phantom{Y_2}` `=]{330}1l
\putmorphism(570,20)(0,-1)[\phantom{Y_2}``=]{330}1l
\put(-150,-210){\fbox{$\id\ot l_A$}}

 \putmorphism(-510,-300)(1,0)[` `A]{540}1a
 \putmorphism(20,-300)(1,0)[` `B]{540}1a

\efig
$$

$$
\bfig
 \putmorphism(-510,150)(1,0)[` `A]{360}1a
 \putmorphism(-150,150)(1,0)[(` `B]{360}1a
\putmorphism(210,150)(1,0)[`)`\Id]{360}1a

\putmorphism(-490,170)(0,-1)[\phantom{Y_2}` `=]{330}1l
\putmorphism(570,170)(0,-1)[\phantom{Y_2}``=]{330}1l
\put(-180,-20){\fbox{$l_B\ot\id$}}

 \putmorphism(-510,-150)(1,0)[` `A]{540}1a
 \putmorphism(20,-150)(1,0)[` `B]{540}1a
\efig
\quad\stackrel{\displaystyle{\lambda_{B,A}}}{\Rrightarrow}
\bfig
 \putmorphism(-510,300)(1,0)[` `A]{360}1a
 \putmorphism(-150,300)(1,0)[(` `B]{360}1a
\putmorphism(210,300)(1,0)[`)`\Id]{360}1a

\putmorphism(-490,320)(0,-1)[\phantom{Y_2}` `=]{330}1l
\putmorphism(570,320)(0,-1)[\phantom{Y_2}``=]{330}1l
\put(-70,140){\fbox{$a_{\Id,B,A}$}}

 \putmorphism(-490,0)(1,0)[(` `A]{360}1b  
\putmorphism(-160,0)(1,0)[`)`B]{360}1b 
 \putmorphism(200,0)(1,0)[` `\Id]{360}1b

\putmorphism(-490,20)(0,-1)[\phantom{Y_2}` `=]{330}1l
\putmorphism(570,20)(0,-1)[\phantom{Y_2}``=]{330}1l
\put(0,-210){\fbox{$l_{BA}$}}

 \putmorphism(-510,-300)(1,0)[` `A]{540}1a
 \putmorphism(20,-300)(1,0)[` `B]{540}1a

\efig
$$

$$
\bfig
 \putmorphism(-510,300)(1,0)[` `\Id]{360}1a
 \putmorphism(-150,300)(1,0)[(` `A]{360}1a
\putmorphism(210,300)(1,0)[`)`B]{360}1a

\putmorphism(-490,320)(0,-1)[\phantom{Y_2}` `=]{330}1l
\putmorphism(570,320)(0,-1)[\phantom{Y_2}``=]{330}1l
\put(-70,140){\fbox{$a_{B,A,\Id}$}}

 \putmorphism(-490,0)(1,0)[(` `\Id]{360}1b  
\putmorphism(-160,0)(1,0)[`)`A]{360}1b 
 \putmorphism(200,0)(1,0)[` `B]{360}1b

\putmorphism(-490,20)(0,-1)[\phantom{Y_2}` `=]{330}1l
\putmorphism(570,20)(0,-1)[\phantom{Y_2}``=]{330}1l
\put(-150,-210){\fbox{$\id\ot r_A$}}

 \putmorphism(-510,-300)(1,0)[` `A]{540}1a
 \putmorphism(20,-300)(1,0)[` `B]{540}1a

\efig
\quad\stackrel{\displaystyle{\rho_{B,A}}}{\Rrightarrow}
\bfig
 \putmorphism(-510,150)(1,0)[` `\Id]{360}1a
 \putmorphism(-150,150)(1,0)[(` `A]{360}1a
\putmorphism(210,150)(1,0)[`)`B]{360}1a

\putmorphism(-490,170)(0,-1)[\phantom{Y_2}` `=]{330}1l
\putmorphism(570,170)(0,-1)[\phantom{Y_2}``=]{330}1l
\put(-80,-20){\fbox{$r_{BA}$}}

 \putmorphism(-510,-150)(1,0)[` `A]{540}1a
 \putmorphism(20,-150)(1,0)[` `B]{540}1a
\efig
$$
so that for two horizontally composable 2-cells
$
\xymatrix@C=8pt{ 
 \ar@/^1pc/[rr]^{A}\ar@{}[rr]| {\Downarrow F} \ar@/_1pc/[rr]_{ A'} &  &  
\ar@/^1pc/[rr]^{B}\ar@{}[rr]|{\Downarrow G} \ar@/_1pc/[rr]_{ B'} &  & }  
$ the following three pairs of transversal compositions of 3-cells coincide, the first one involving $\mu$: \\
$$
\left\{\frac{(G\ot\Id)\ot F}{r_{B'}\ot\id_{A'}}\right\} \hspace{0,3cm}
\stackrel{\displaystyle{\frac{\Id}{\mu_{B', A'}}}}{\Rrightarrow} \hspace{0,3cm}
\left\{\threefrac{(G\ot\Id)\ot F}{a_{\bullet, \id,\bullet}}{\id_{B'}\ot l_{A'}}\right\} \hspace{0,3cm}
\stackrel{\displaystyle{\frac{a^{-1}_{G, \Id,F}}{\Id}}}{\Rrightarrow} \hspace{0,2cm}
\left\{\threefrac{a_{\bullet, \id,\bullet}}{G\ot(\Id\ot F)}{\id_{B'}\ot l_{A'}}\right\} \hspace{0,3cm}
\stackrel{\displaystyle{\frac{\Id}{\xi}}}{\Rrightarrow} \hspace{0,3cm}
\left\{\frac{a_{\bullet, \id,\bullet}}{[\frac{G}{\id_{B'}}]\ot[\frac{\Id\ot F}{l_{A'}}]}\right\}
%
$$
$$\hspace{0,8cm} \Downarrow  \displaystyle{\xi} \hspace{12,6cm} \Downarrow \displaystyle{\frac{\Id}{\omega_G\ot l_F}} $$
$$
\left\{[\frac{G\ot\Id}{r_{B'}}]\ot[\frac{F}{\id_{A'}}]\right\} 
\stackrel{\displaystyle{r_G\ot\omega_F}}{\Rrightarrow} 
\left\{[\frac{r_{B}}{G}]\ot[\frac{\id_{A}}{F}]\right\} 
\stackrel{\displaystyle{\xi^{-1}}}{\Rrightarrow} 
\left\{\frac{r_B\ot\id_A}{G\ot F}\right\} 
\stackrel{\displaystyle{\frac{\mu_{B,A}}{\Id}}}{\Rrightarrow} 
\left\{\threefrac{a_{\bullet, \id,\bullet}}{\id_{B}\ot l_A}{G\ot F}\right\}
\stackrel{ \displaystyle{\frac{\Id}{\xi}} } {\Rrightarrow}
\left\{\frac{a_{\bullet, \id,\bullet}}{[\frac{\id_{B}}{G}]\ot[\frac{l_{A}}{F}]}\right\}
$$
the second one involving $\lambda$: 
$$
\left\{\fourfrac{a_{\id,\bullet,\bullet}}{\Id\ot(G\ot F)}{a^{-1}_{\id,\bullet,\bullet}}{l_{B'}\ot\id_{A'}}\right\} \hspace{0,3cm}
\stackrel{\displaystyle{\frac{a_{\Id,G,F}}{\Id_{=}}}}{\Rrightarrow} \hspace{0,3cm}
\left\{\fourfrac{(\Id\ot G)\ot F}{a_{\id,\bullet,\bullet}}{a^{-1}_{\id,\bullet,\bullet}}{l_{B'}\ot\id_{A'}}\right\} \hspace{0,3cm}
\stackrel{\displaystyle{\equiv}}{\Rrightarrow} \hspace{0,3cm}
\left\{\frac{(\Id\ot G)\ot F}{l_{B'}\ot\id_{A'}}\right\} \hspace{0,3cm}
\stackrel{\displaystyle{\xi}}{\Rrightarrow} \hspace{0,3cm}
\left\{[\frac{\Id\ot G}{l_{B'}}]\ot[\frac{F}{\id_{A'}}] \right\}
%
$$
$$ \hspace{1cm}\Downarrow \displaystyle{\frac{\Id_{=}}{\lambda^*_{B'A'}}} \hspace{10,6cm} \Downarrow \displaystyle{ l_G\ot\omega_F} $$
$$
\left\{\threefrac{a_{\id,\bullet, \bullet}}{\Id\ot(G\ot F)}{l_{B'A'}}\right\}  \hspace{0,5cm} 
\stackrel{\displaystyle{\frac{\Id}{l_{GF}}}}{\Rrightarrow} \hspace{0,5cm} 
\left\{\fourfrac{a_{\id,\bullet,\bullet}}{a^{-1}_{\id,\bullet,\bullet}}{l_{B}\ot\id_{A}}{G\ot F}\right\} \hspace{0,5cm} 
\displaystyle{\equiv} \hspace{0,5cm} 
\left\{\frac{l_{B}\ot\id_{A}}{G\ot F}\right\} \hspace{0,5cm} 
\stackrel{\displaystyle{\xi}}{\Rrightarrow} \hspace{0,5cm} 
\left\{[\frac{l_{B}}{G}]\ot[\frac{\id_{A}}{F}] \right\}
$$
the third one involving $\rho$: 
$$
\left\{\threefrac{(G\ot F)\ot\Id}{a_{\bullet,\bullet, \id}}{\id_{B'}\ot r_{A'}}\right\} \hspace{1cm}
\stackrel{\displaystyle{\frac{a^{-1}_{G,F,\Id}}{\Id}}}{\Rrightarrow} \hspace{1cm}
\left\{\threefrac{a_{\bullet,\bullet, \id}}{G\ot (F\ot\Id)}{\id_{B'}\ot r_{A'}}\right\} \hspace{1cm}
\stackrel{\displaystyle{\frac{\Id}{\xi}}}{\Rrightarrow} \hspace{1cm}
\left\{\frac{a_{\bullet,\bullet, \id}} {[\frac{G}{\id_{B'}}]\ot[\frac{F\ot\Id}{r_{A'}}]} \right\}
$$
$$\hspace{1cm} \Downarrow \displaystyle{\frac{\Id}{\rho_{B'A'}}} \hspace{10,2cm} \Downarrow \displaystyle{\frac{\Id}{\omega_G\ot r_F}} $$
$$
\left\{\frac{(G\ot F)\ot\Id}{\r_{B'A'}}\right\} \hspace{0,6cm}
\stackrel{\displaystyle{r_{GF} }}{\Rrightarrow}  \hspace{0,6cm}
\left\{\frac{r_{BA}}{G\ot F}\right\} \hspace{0,6cm}
\stackrel{\displaystyle{\frac{\rho^{-1}_{BA}}{\Id}}}{\Rrightarrow} \hspace{0,5cm}
\left\{\threefrac{a_{\bullet,\bullet, \id}}{\id_{B}\ot r_{A}}{G\ot F}\right\} \hspace{0,5cm}
\stackrel{\displaystyle{\frac{\Id}{\xi}}}{\Rrightarrow} \hspace{0,5cm}
\left\{\frac{a_{\bullet,\bullet, \id}}{[\frac{\id_{B}}{G}]\ot[\frac{r_{A}}{F}] }\right\},
$$
here $\omega$ is the appropriate composition of unity constraints for the vertical composition mapping  
$\omega_F:\frac{F}{\id_{A'}}\to \frac{\id_{A}}{F}$ and similarly for $G$.

\section{Monoidal double categories into which monoidal bicategories embed} \selabel{mon}

Although bicategories embed into pseudo double categories, this embedding is not monoidal, 
if one takes for a definition of a monoidal double category any of the ones in \cite{BMM} 
(a monoid in the category of strict double categories and strict double functors) and in \cite{Shul} 
(a pseudomonoid in the 2-category of pseudo double categories, pseudo double functors and say vertical transformations). 
Namely, a monoidal bicategory is a one object tricategory, so its 0-cells have a product associative up to an {\em equivalence}. 
This is far from what happens in the mentioned two definitions of a monoidal double category. 
Even if we consider the triequivalence due to \cite{GPS} of a monoidal bicategory with a one object Gray-category, that is, a Gray monoid, 
one does not have monoidal embeddings, as we will show. Nevertheless, a Gray monoid, which is in fact a monoid in the monoidal category 
$(Gray,\ot)$ of 2-categories, 2-functors with the monoidal product due to Gray \cite{Gray}, can be seen as a monoid in the monoidal category 
$(Dbl,\ot)$ of strict double categories and double functors given by isomorpisms in both direction with the monoidal product constructed in \cite[Section 4.3]{Gabi}. The latter double functors were named later {\em double pseudo functors} in \cite[Definition 6.1]{Shul1}.  
We will show in this section that $(Gray,\ot)$ embeds monoidaly into $(Dbl,\ot)$.

\subsection{The monoidal structure in $(Dbl,\ot)$ } \sslabel{mon str}

The monoidal structure in $(Dbl,\ot)$ is constructed in the analogous way as in \cite{Gray}. For two double categories $\Aa, \Bb$ 
a double category $\llbracket\Aa,\Bb\rrbracket$ is defined in \cite[Section 2.2]{Gabi} which induces a functor $\llbracket-,-\rrbracket: Dbl^{op}\times Dbl\to Dbl$.  
Representability of the functor $Dbl(\Aa, \llbracket\Bb,-\rrbracket): Dbl\to Set$ is proved, which induces a functor 
$-\ot-: Dbl\times Dbl\to Dbl$. 
For two double categories $\Aa,\Bb$ we will give a full description of the double category $\Aa\ot\Bb$. We will do this using the natural isomorphism 
\begin{equation} \eqlabel{nat iso}
Dbl(\Aa\ot\Bb, \Cc)\iso Dbl(\Aa, \llbracket\Bb,\Cc\rrbracket),
\end{equation}
that is, characterizing a double functor $F:\Aa\to\llbracket\Bb,\Cc\rrbracket$ for another double category $\Cc$ and reading off 
the structure of the image double category $F(\Aa)(\Bb)$, setting $\Cc=\Aa\ot\Bb$. 

\medskip

Let us fix the notation in a double category $\Dd$. 
Objects we denote by $A,B, \dots$, horizontal 1-cells we will call 
for brevity 1h-cells and denote them by $f, f', g, F, \dots$, vertical 1-cells we will call 1v-cells and denote by $u,v, U, \dots$, 
and squares we will call just 2-cells and denote them by $\omega, \zeta, \dots$. We denote 
the horizontal identity 1-cell by $1_A$, vertical identity 1-cell by $1^A$ for an object $A\in\Dd$, 
horizontal identity 2-cell on a 1v-cell $u$ by $Id^u$, and vertical identity 2-cell on a 1h-cell $f$ by $Id_f$ (with subindices we denote 
those identity 1- and 2-cells which come from the horizontal 2-category lying in $\Dd$, for this reason we flip the r\^oles of sub and 
supra indices in identity 2-cells with respect to those for identity 1-cells). 
The composition of 1h-cells as well as the 
horizontal composition of 2-cells we will denote by $\odot$, while the composition of 1v-cells as well the 
vertical composition of 2-cells we will denote by juxtaposition. 

We start by noticing that a strict double functor $F:\Cc\to\Dd$ is given by 1) the data: images on objects, 1h-, 1v- and 2-cells of $\Cc$, and 2) rules (in $\Dd$): 

$F(u'u)=F(u')F(u), \quad F(1^A)=1^{F(A)},$ 

$F(\omega\zeta)=F(\omega)F(\zeta), \quad F(1_f)=1_{F(f)},$ 

$F(g\odot f)=F(g)\odot F(f), \quad F(\omega\odot\zeta)=F(\omega)\odot F(\zeta),$ 

$F(1_A)=1_{F(A)},  \quad F(Id^u)=Id^{F(u)}.$

Having in mind the definition of a double category $\llbracket\Aa,\Bb\rrbracket$ from \cite[Section 2.2]{Gabi}, writing out the list of the data and relations 
that determine a double functor $F:\Aa\to\llbracket\Bb,\Cc\rrbracket$, one gets the following characterization of it:

\begin{prop} \prlabel{char df}
A double functor $F:\Aa\to\llbracket\Bb,\Cc\rrbracket$ of double categories consists of the following: \\
1. double functors 
$$(-,A):\Bb\to\Cc\quad\text{ and}\quad (B,-):\Aa\to\Cc$$ 
such that $(-,A)\vert_B=(B,-)\vert_A=(B,A)$, 
for objects $A\in\Aa, B\in\Bb$, \\
2. given 1h-cells $A\stackrel{F}{\to} A'$ and $B\stackrel{f}{\to} B'$ and 1v-cells $A\stackrel{U}{\to} \tilde A$ and $B\stackrel{u}{\to} \tilde B$ 
there are 2-cells
$$
\bfig
 \putmorphism(-150,500)(1,0)[(B,A)`(B,A')`(B,F)]{600}1a
 \putmorphism(450,500)(1,0)[\phantom{A\ot B}`(B', A') `(f, A')]{680}1a
 \putmorphism(-150,50)(1,0)[(B,A)`(B', A)`(f, A)]{600}1a
 \putmorphism(450,50)(1,0)[\phantom{A\ot B}`(B', A') `(B', F)]{680}1a
\putmorphism(-180,500)(0,-1)[\phantom{Y_2}``=]{450}1r
\putmorphism(1100,500)(0,-1)[\phantom{Y_2}``=]{450}1r
\put(350,260){\fbox{$(f,F)$}}
\efig
$$

$$
\bfig
\putmorphism(-150,50)(1,0)[(B,A)`(B,A')`(B,F)]{600}1a
\putmorphism(-150,-400)(1,0)[(\tilde B, A)`(\tilde B,A') `(\tilde B,F)]{640}1a
\putmorphism(-180,50)(0,-1)[\phantom{Y_2}``(u,A)]{450}1l
\putmorphism(450,50)(0,-1)[\phantom{Y_2}``(u,A')]{450}1r
\put(0,-180){\fbox{$(u, F)$}}
\efig
\quad
\bfig
\putmorphism(-150,50)(1,0)[(B,A)`(B',A)`(f,A)]{600}1a
\putmorphism(-150,-400)(1,0)[(B, \tilde A)`(B', \tilde A) `(f,\tilde A)]{640}1a
\putmorphism(-180,50)(0,-1)[\phantom{Y_2}``(B,U)]{450}1l
\putmorphism(450,50)(0,-1)[\phantom{Y_2}``(B',U)]{450}1r
\put(0,-180){\fbox{$(f,U)$}}
\efig
$$

$$
\bfig
 \putmorphism(-150,500)(1,0)[(B,A)`(B,A) `=]{600}1a
\putmorphism(-180,500)(0,-1)[\phantom{Y_2}`(B, \tilde A) `(B,U)]{450}1l
\put(0,50){\fbox{$(u,U)$}}
\putmorphism(-150,-400)(1,0)[(\tilde B, \tilde A)`(\tilde B, \tilde A) `=]{640}1a
\putmorphism(-180,50)(0,-1)[\phantom{Y_2}``(u,\tilde A)]{450}1l
\putmorphism(450,50)(0,-1)[\phantom{Y_2}``(\tilde B, U)]{450}1r
\putmorphism(450,500)(0,-1)[\phantom{Y_2}`(\tilde B, A) `(u,A)]{450}1r
\efig
$$
of which $(f,F)$ is vertically invertible  and $(u,U)$ is horizontally invertible, which satisfy: 
\begin{enumerate} [a)]
\item (11)\quad $(1_B,F)=Id_{(B,F)}\quad\text{and}\quad (f,1_A)=Id_{(f,A)}$ 

(21)\quad  $(1^B,F)=Id_{(B,F)}\quad\text{and}\quad (u,1_A)=Id^{(u,A)}$ 

(12)\quad  $(1_B,U)=Id^{(B,U)}\quad\text{and}\quad (f,1^A)=Id_{(f,A)}$ 

(22)\quad  $(1^B,U)=Id^{(B,U)}\quad\text{and}\quad (u,1^A)=Id^{(u,A)};$

\item (11)  
$(f'f, F)=
\bfig

 \putmorphism(-150,0)(1,0)[(B,A)`(B,A')`(B,F)]{600}1a
 \putmorphism(450,0)(1,0)[\phantom{A\ot B}`(B', A') `(f,A')]{680}1a

 \putmorphism(-150,-450)(1,0)[(B,A)`(B',A)`(f,A)]{600}1a
 \putmorphism(450,-450)(1,0)[\phantom{A\ot B}`(B', A') `(B',F)]{680}1a
 \putmorphism(1100,-450)(1,0)[\phantom{A'\ot B'}`(B'', A') `(f', A')]{660}1a

\putmorphism(-180,0)(0,-1)[\phantom{Y_2}``=]{450}1r
\putmorphism(1100,0)(0,-1)[\phantom{Y_2}``=]{450}1r
\put(350,-240){\fbox{$(f,F)$}}
\put(1000,-700){\fbox{$(f',F)$}}

 \putmorphism(450,-900)(1,0)[(B', A)` (B'', A) `(f', A)]{680}1a
 \putmorphism(1100,-900)(1,0)[\phantom{A''\ot B'}`(B'', A') ` (B'', F)]{660}1a

\putmorphism(450,-450)(0,-1)[\phantom{Y_2}``=]{450}1l
\putmorphism(1750,-450)(0,-1)[\phantom{Y_2}``=]{450}1r
\efig
$ \\
and \\
$(f,F\s'F)=
\bfig
 \putmorphism(450,500)(1,0)[(B, A') `(B, A'') `(B, F')]{680}1a
 \putmorphism(1140,500)(1,0)[\phantom{A\ot B}`(B', A'') ` (f, A'')]{680}1a

 \putmorphism(-150,50)(1,0)[(B, A) `(B, A')`(B, F)]{600}1a
 \putmorphism(450,50)(1,0)[\phantom{A\ot B}`(B', A') `(f, A')]{680}1a
 \putmorphism(1130,50)(1,0)[\phantom{A\ot B}`(B', A'') ` (B', F\s')]{680}1a

\putmorphism(450,500)(0,-1)[\phantom{Y_2}``=]{450}1r
\putmorphism(1750,500)(0,-1)[\phantom{Y_2}``=]{450}1r
\put(1020,270){\fbox{$ (f,F\s')$}}

 \putmorphism(-150,-400)(1,0)[(B, A)`(B', A) `(f,A)]{640}1a
 \putmorphism(480,-400)(1,0)[\phantom{A'\ot B'}`(B', A') `(B', F)]{680}1a

\putmorphism(-180,50)(0,-1)[\phantom{Y_2}``=]{450}1l
\putmorphism(1120,50)(0,-1)[\phantom{Y_3}``=]{450}1r
\put(310,-200){\fbox{$ (f,F)$}}

\efig
$ \\

(21)\quad $(u'u, F)=(u', F)(u,F)\quad\text{and}\quad (u, F\s'F)=(u,F\s')\odot(u,F)$ 

(12)\quad $(f'f, U)=(f', U)\odot(f, U)\quad\text{and}\quad (f,U'U)=(f,U')(f,U)$ 

(22)\quad 
$$(u,U'U)=
\bfig
 \putmorphism(-150,500)(1,0)[(B,A)`(B,A) `=]{600}1a
\putmorphism(-180,500)(0,-1)[\phantom{Y_2}`(B, \tilde A) `(B,U)]{450}1l
\put(0,50){\fbox{$(u,U)$}}
\putmorphism(-150,-400)(1,0)[(\tilde B, \tilde A)`(\tilde B, \tilde A) `=]{640}1a
\putmorphism(-180,50)(0,-1)[\phantom{Y_2}``(u,\tilde A)]{450}1l
\putmorphism(450,50)(0,-1)[\phantom{Y_2}``(\tilde B, U)]{450}1r
\putmorphism(450,500)(0,-1)[\phantom{Y_2}`(\tilde B, A) `(u,A)]{450}1r
\putmorphism(-820,50)(1,0)[(B, \tilde A)``=]{520}1a
\putmorphism(-820,50)(0,-1)[\phantom{(B, \tilde A')}``(B,U')]{450}1l
\putmorphism(-820,-400)(0,-1)[(B, \tilde A')`(\tilde B, \tilde A')`(u,\tilde A')]{450}1l
\putmorphism(-820,-850)(1,0)[\phantom{(B, \tilde A)}``=]{520}1a
\putmorphism(-150,-400)(0,-1)[(\tilde B, \tilde A)`(\tilde B, \tilde A') `(\tilde B, U')]{450}1r
\put(-650,-630){\fbox{$(u,U')$}}
\efig
$$
and
$$(u'u,U)=
\bfig
 \putmorphism(-150,500)(1,0)[(B,A)`(B,A) `=]{600}1a
\putmorphism(-180,500)(0,-1)[\phantom{Y_2}`(B, \tilde A) `(B,U)]{450}1l
\put(0,50){\fbox{$(u,U)$}}
\putmorphism(-150,-400)(1,0)[(\tilde B, \tilde A)` `=]{500}1a
\putmorphism(-180,50)(0,-1)[\phantom{Y_2}``(u,\tilde A)]{450}1l
\putmorphism(450,50)(0,-1)[\phantom{Y_2}`(\tilde B, \tilde A)`(\tilde B, U)]{450}1r
\putmorphism(450,500)(0,-1)[\phantom{Y_2}`(\tilde B, A) `(u,A)]{450}1r
\putmorphism(450,50)(1,0)[\phantom{(B, \tilde A)}`(\tilde B, A)`=]{620}1a
\putmorphism(1070,50)(0,-1)[\phantom{(B, \tilde A')}``(u',A)]{450}1r
\putmorphism(1070,-400)(0,-1)[(\tilde B', A)`(\tilde B', \tilde A)`(\tilde B', U)]{450}1r
\putmorphism(450,-850)(1,0)[\phantom{(B, \tilde A)}``=]{520}1a
\putmorphism(450,-400)(0,-1)[\phantom{(B, \tilde A)}`(\tilde B', \tilde A) `(U',\tilde A)]{450}1l
\put(600,-630){\fbox{$(u',U)$}}
\efig
$$
\item (11)\quad 
$$
\bfig
 \putmorphism(-150,500)(1,0)[(B,A)`(B,A')`(B,F)]{600}1a
 \putmorphism(450,500)(1,0)[\phantom{A\ot B}`(B', A') `(f, A')]{680}1a
 \putmorphism(-150,50)(1,0)[(B,A)`(B', A)`(f, A)]{600}1a
 \putmorphism(450,50)(1,0)[\phantom{A\ot B}`(B', A') `(B', F)]{680}1a

\putmorphism(-180,500)(0,-1)[\phantom{Y_2}``=]{450}1r
\putmorphism(1100,500)(0,-1)[\phantom{Y_2}``=]{450}1r
\put(350,260){\fbox{$(f,F)$}}
\put(650,-180){\fbox{$(v,F)$}}

\putmorphism(-150,-400)(1,0)[(\tilde B,A)`(\tilde B',A) `(g,A)]{640}1a
 \putmorphism(450,-400)(1,0)[\phantom{A'\ot B'}` (\tilde B',A') `(\tilde B', F)]{680}1a

\putmorphism(-180,50)(0,-1)[\phantom{Y_2}``(u,A)]{450}1l
\putmorphism(450,50)(0,-1)[\phantom{Y_2}``]{450}1l
\putmorphism(610,50)(0,-1)[\phantom{Y_2}``(v,A)]{450}0l 
\putmorphism(1120,50)(0,-1)[\phantom{Y_3}``(v,A')]{450}1r
\put(-40,-180){\fbox{$(\omega,A)$}} 

\efig
=
\bfig
\putmorphism(-150,500)(1,0)[(B,A)`(B,A')`(B,F)]{600}1a
 \putmorphism(450,500)(1,0)[\phantom{A\ot B}`(B', A') `(f, A')]{680}1a

 \putmorphism(-150,50)(1,0)[(\tilde B,A)`(\tilde B,A')`(\tilde B,F)]{600}1a
 \putmorphism(450,50)(1,0)[\phantom{A\ot B}`(\tilde B',A') `(g, A')]{680}1a

\putmorphism(-180,500)(0,-1)[\phantom{Y_2}``(u,A)]{450}1l
\putmorphism(450,500)(0,-1)[\phantom{Y_2}``]{450}1r
\putmorphism(300,500)(0,-1)[\phantom{Y_2}``(u,A')]{450}0r
\putmorphism(1100,500)(0,-1)[\phantom{Y_2}``(v,A')]{450}1r
\put(-20,260){\fbox{$(u,F)$}}
\put(660,260){\fbox{$(\omega,A')$}}

\putmorphism(-150,-400)(1,0)[(\tilde B,A)`(\tilde B',A) `(g, A)]{640}1a
 \putmorphism(450,-400)(1,0)[\phantom{A'\ot B'}` (\tilde B,A') `(\tilde B',F)]{680}1a

\putmorphism(-180,50)(0,-1)[\phantom{Y_2}``=]{450}1l
\putmorphism(1120,50)(0,-1)[\phantom{Y_3}``=]{450}1r
\put(300,-200){\fbox{$(g,F)$}}

\efig
$$
and 
$$
\bfig
 \putmorphism(-150,500)(1,0)[(B,A)`(B,A')`(B,F)]{600}1a
 \putmorphism(450,500)(1,0)[\phantom{A\ot B}`(B', A') `(f, A')]{680}1a
 \putmorphism(-150,50)(1,0)[(B,A)`(B', A)`(f, A)]{600}1a
 \putmorphism(450,50)(1,0)[\phantom{A\ot B}`(B', A') `(B', F)]{680}1a

\putmorphism(-180,500)(0,-1)[\phantom{Y_2}``=]{450}1r
\putmorphism(1100,500)(0,-1)[\phantom{Y_2}``=]{450}1r
\put(350,260){\fbox{$(f,F)$}}
\put(650,-180){\fbox{$(B', \zeta)$}}

\putmorphism(-150,-400)(1,0)[(B,\tilde A)`(B',\tilde A) `(f,\tilde A)]{640}1a
 \putmorphism(450,-400)(1,0)[\phantom{A'\ot B'}` (B',\tilde A') `(B', G)]{680}1a

\putmorphism(-180,50)(0,-1)[\phantom{Y_2}``(B,U)]{450}1l
\putmorphism(450,50)(0,-1)[\phantom{Y_2}``]{450}1l
\putmorphism(610,50)(0,-1)[\phantom{Y_2}``(B',U)]{450}0l 
\putmorphism(1120,50)(0,-1)[\phantom{Y_3}``(B',V)]{450}1r
\put(-40,-180){\fbox{$(f,U)$}} 

\efig
=
\bfig
\putmorphism(-150,500)(1,0)[(B,A)`(B,A')`(B,F)]{600}1a
 \putmorphism(450,500)(1,0)[\phantom{A\ot B}`(B', A') `(f, A')]{680}1a

 \putmorphism(-150,50)(1,0)[(B,\tilde A)`(B,\tilde A')`(B,G)]{600}1a
 \putmorphism(450,50)(1,0)[\phantom{A\ot B}`(B',\tilde A') `(f,\tilde A')]{680}1a

\putmorphism(-180,500)(0,-1)[\phantom{Y_2}``(B,U)]{450}1l
\putmorphism(450,500)(0,-1)[\phantom{Y_2}``]{450}1r
\putmorphism(300,500)(0,-1)[\phantom{Y_2}``(B,V)]{450}0r
\putmorphism(1100,500)(0,-1)[\phantom{Y_2}``(B',V)]{450}1r
\put(-20,260){\fbox{$(B,\zeta)$}}
\put(660,260){\fbox{$(F,V)$}}

\putmorphism(-150,-400)(1,0)[(B,\tilde A)`(B',\tilde A) `(f, \tilde A)]{640}1a
 \putmorphism(450,-400)(1,0)[\phantom{A'\ot B'}` (B',\tilde A') `(B',G)]{680}1a

\putmorphism(-180,50)(0,-1)[\phantom{Y_2}``=]{450}1l
\putmorphism(1120,50)(0,-1)[\phantom{Y_3}``=]{450}1r
\put(300,-200){\fbox{$(f,G)$}}

\efig
$$
(22)\quad
$$
\bfig
 \putmorphism(-150,500)(1,0)[(B,A)`(B,A) `=]{600}1a
 \putmorphism(450,500)(1,0)[(B,A)` `(f,A)]{450}1a
\putmorphism(-180,500)(0,-1)[\phantom{Y_2}`(B, \tilde A) `(B,U)]{450}1l
\put(0,50){\fbox{$(u,U)$}}
\putmorphism(-150,-400)(1,0)[(\tilde B, \tilde A)` `=]{500}1a
\putmorphism(-180,50)(0,-1)[\phantom{Y_2}``(u,\tilde A)]{450}1l
\putmorphism(450,50)(0,-1)[\phantom{Y_2}`(\tilde B, \tilde A)`(\tilde B, U)]{450}1l
\putmorphism(450,500)(0,-1)[\phantom{Y_2}`(\tilde B, A) `(u,A)]{450}1l
\put(600,260){\fbox{$(\omega,A)$}}
\putmorphism(450,50)(1,0)[\phantom{(B, \tilde A)}``(g, A)]{500}1a
\putmorphism(1070,50)(0,-1)[\phantom{(B, A')}`(\tilde B', \tilde A)`(\tilde B',U)]{450}1r
\putmorphism(1070,500)(0,-1)[(B', A)`(\tilde B', A)`(v,A)]{450}1r
\putmorphism(450,-400)(1,0)[\phantom{(B, \tilde A)}``(g, \tilde A)]{500}1a
\put(600,-170){\fbox{$(g,U)$}}
\efig=
\bfig
 \putmorphism(-150,500)(1,0)[(B,A)`(B',A) `(f,A)]{600}1a
 \putmorphism(450,500)(1,0)[\phantom{(B,A)}` `=]{450}1a
\putmorphism(-180,500)(0,-1)[\phantom{Y_2}`(B, \tilde A) `(B,U)]{450}1l
\put(620,50){\fbox{$(v,U)$}}
\putmorphism(-150,-400)(1,0)[(\tilde B, \tilde A)` `(g, \tilde A)]{500}1a
\putmorphism(-180,50)(0,-1)[\phantom{Y_2}``(u,\tilde A)]{450}1l
\putmorphism(450,50)(0,-1)[\phantom{Y_2}`(\tilde B', \tilde A)`(v,\tilde A)]{450}1r
\putmorphism(450,500)(0,-1)[\phantom{Y_2}`(B', \tilde A) `(B',U)]{450}1r
\put(0,260){\fbox{$(f,U)$}}
\putmorphism(-150,50)(1,0)[\phantom{(B, \tilde A)}``(f, \tilde A)]{500}1a
\putmorphism(1070,50)(0,-1)[\phantom{(B, A')}`(\tilde B', \tilde A)`(\tilde B',U)]{450}1r
\putmorphism(1070,500)(0,-1)[(B', A)`(\tilde B', A)`(v,A)]{450}1r
\putmorphism(450,-400)(1,0)[\phantom{(B, \tilde A)}``=]{500}1b
\put(0,-170){\fbox{$(\omega, \tilde A)$}}
\efig
$$
and
$$
\bfig
 \putmorphism(-150,500)(1,0)[(B,A)`(B,A) `=]{600}1a
 \putmorphism(550,500)(1,0)[` `(B,F)]{400}1a
\putmorphism(-180,500)(0,-1)[\phantom{Y_2}`(B, \tilde A) `(B,U)]{450}1l
\put(0,50){\fbox{$(u,U)$}}
\putmorphism(-150,-400)(1,0)[(\tilde B, \tilde A)` `=]{500}1a
\putmorphism(-180,50)(0,-1)[\phantom{Y_2}``(u,\tilde A)]{450}1l
\putmorphism(450,50)(0,-1)[\phantom{Y_2}`(\tilde B, \tilde A)`(\tilde B, U)]{450}1l
\putmorphism(450,500)(0,-1)[\phantom{Y_2}`(\tilde B, A) `(u,A)]{450}1l
\put(620,280){\fbox{$(u,F)$}}
\putmorphism(450,50)(1,0)[\phantom{(B, \tilde A)}``(\tilde B,F)]{500}1a
\putmorphism(1070,50)(0,-1)[\phantom{(B, A')}`(\tilde B, \tilde A')`(\tilde B,V)]{450}1r
\putmorphism(1070,500)(0,-1)[(B, A')`(\tilde B, A')`(u,A')]{450}1r
\putmorphism(450,-400)(1,0)[\phantom{(B, \tilde A)}``(\tilde B, G)]{500}1a
\put(620,-170){\fbox{$ (\tilde{B},\zeta)$ } } 
\efig=
\bfig
 \putmorphism(-150,500)(1,0)[(B,A)`(B,A') `(B,F)]{600}1a
 \putmorphism(450,500)(1,0)[\phantom{(B,A)}` `=]{450}1a
\putmorphism(-180,500)(0,-1)[\phantom{Y_2}`(B, \tilde A) `(B,U)]{450}1l
\put(620,50){\fbox{$(u,V)$}}
\putmorphism(-150,-400)(1,0)[(\tilde B, \tilde A)` `(\tilde B, G)]{500}1a
\putmorphism(-180,50)(0,-1)[\phantom{Y_2}``(u,\tilde A)]{450}1l
\putmorphism(450,50)(0,-1)[\phantom{Y_2}`(\tilde B, \tilde A')`(u,\tilde A')]{450}1r
\putmorphism(450,500)(0,-1)[\phantom{Y_2}`(B, \tilde A') `(B,V)]{450}1r
\put(0,260){\fbox{$(B,\zeta)$}}
\putmorphism(-150,50)(1,0)[\phantom{(B, \tilde A)}``(B,G)]{500}1a
\putmorphism(1070,50)(0,-1)[\phantom{(B, A')}`(\tilde B, \tilde A')`(\tilde B,V)]{450}1r
\putmorphism(1070,500)(0,-1)[(B, A')`(\tilde B, A')`(u,A')]{450}1r
\putmorphism(450,-400)(1,0)[\phantom{(B, \tilde A)}``=]{500}1b
\put(0,-170){\fbox{$(u,G)$}}
\efig
$$
for any 2-cells 
\begin{equation} \eqlabel{omega-zeta}
\bfig
\putmorphism(-150,50)(1,0)[B` B'`f]{450}1a
\putmorphism(-150,-300)(1,0)[\tilde B`\tilde B' `g]{440}1b
\putmorphism(-170,50)(0,-1)[\phantom{Y_2}``u]{350}1l
\putmorphism(280,50)(0,-1)[\phantom{Y_2}``v]{350}1r
\put(0,-140){\fbox{$\omega$}}
\efig
\quad\text{and}\quad
\bfig
\putmorphism(-150,50)(1,0)[A` A'`F]{450}1a
\putmorphism(-150,-300)(1,0)[\tilde A`\tilde A'. `G]{440}1b
\putmorphism(-170,50)(0,-1)[\phantom{Y_2}``U]{350}1l
\putmorphism(280,50)(0,-1)[\phantom{Y_2}``V]{350}1r
\put(0,-140){\fbox{$\zeta$}}
\efig
\end{equation}
in $\Bb$, respectively $\Aa$. 
\end{enumerate}
\end{prop}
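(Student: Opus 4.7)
The plan is to unpack the definition of a strict double functor $F:\Aa\to\llbracket\Bb,\Cc\rrbracket$ against the explicit structure of $\llbracket\Bb,\Cc\rrbracket$ recalled from \cite[Section 2.2]{Gabi}, and to read off exactly the stated data and axioms. Recall that the 0-cells of $\llbracket\Bb,\Cc\rrbracket$ are double functors $\Bb\to\Cc$, the 1h-cells (resp.\ 1v-cells) are horizontal (resp.\ vertical) strong transformations between such functors, whose components at an object, 1h-cell and 1v-cell of $\Bb$ are respectively a 1-cell and two 2-cells of $\Cc$ subject to the usual coherence with identities and compositions; and the 2-cells of $\llbracket\Bb,\Cc\rrbracket$ are modifications, given by a 2-cell component at each object of $\Bb$, compatible with the 1h- and 1v-cell components of the source and target transformations.

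First I would translate the assignments of $F$ one sort at a time. An object $A\in\Aa$ yields a double functor $F(A):\Bb\to\Cc$, which we rename $(-,A)$; in particular $(-,A)\vert_B=(B,A)\in\Cc$. A 1h-cell $A\stackrel{F}{\to}A'$ goes to a horizontal strong transformation $(-,F):(-,A)\Rightarrow(-,A')$ whose object, 1h-cell and 1v-cell components at $B,f,u$ are respectively the 1h-cell $(B,F)$ and the 2-cells $(f,F)$ (vertically invertible) and $(u,F)$ displayed in the statement. Analogously a 1v-cell $U:A\to\tilde A$ goes to a vertical strong transformation with components $(B,U),(f,U),(u,U)$, with $(u,U)$ horizontally invertible; a 2-cell $\zeta$ of $\Aa$ goes to a modification whose components $(B,\zeta)$ at $B\in\Bb$ appear in the interchange laws. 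The assignment $B\mapsto\text{(structure at $B$)}$ then automatically assembles into a double functor $(B,-):\Aa\to\Cc$, fitting the compatibility $(-,A)\vert_B=(B,-)\vert_A=(B,A)$ of item~1.

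Next I would match axioms. The four families in (a) come from preservation by $F$ of the four kinds of identities of $\Aa$ (horizontal and vertical identities of objects, and horizontal and vertical unit 2-cells on 1h- and 1v-cells), combined with the strictness of the identity components of transformations and modifications in $\llbracket\Bb,\Cc\rrbracket$. The four families in (b) encode preservation by $F$ of the four kinds of compositions in $\Aa$, together with the fact that in $\llbracket\Bb,\Cc\rrbracket$ composition of transformations is computed componentwise while the coherence 2-cells on a composite 1h- or 1v-cell of $\Bb$ are built as pastings of the individual ones (this is exactly what produces the pasting diagrams displayed in (b)). The two laws in (c) realise the interchange axioms $F(\omega\odot\zeta)=F(\omega)\odot F(\zeta)$ expanded against the naturality/coherence of horizontal and vertical transformations with respect to 2-cells $\omega$ in $\Bb$ and $\zeta$ in $\Aa$.

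The main obstacle is purely notational bookkeeping: there is a $2\times 2$ matrix of cases (horizontal/vertical in each of $\Aa$ and $\Bb$), and pasting diagrams must be oriented consistently so that each square in (b) and (c) is the image under $F$ of the relevant equation in $\Aa$, or the naturality square of a component of $F(\text{something})$ along a structural 1- or 2-cell of $\Bb$. Once a coherent dictionary $F(\xi)(\eta)\leftrightarrow(\eta,\xi)$ is fixed, each axiom of a strict double functor into $\llbracket\Bb,\Cc\rrbracket$ matches one of the displayed equations and vice versa. The converse direction, that any collection of data satisfying 1 and (a)--(c) reassembles into such an $F$, is obtained by reading this dictionary backwards and invoking the description of $\llbracket\Bb,\Cc\rrbracket$; no further verification beyond what (a)--(c) already state is needed.
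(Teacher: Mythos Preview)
Your approach is essentially the same as the paper's: the paper does not give a separate proof but simply remarks that, having in mind the definition of $\llbracket\Bb,\Cc\rrbracket$, ``writing out the list of the data and relations that determine a double functor $F:\Aa\to\llbracket\Bb,\Cc\rrbracket$, one gets the following characterization''. Your proposal does exactly this unpacking and correctly identifies the dictionary $F(\xi)(\eta)\leftrightarrow(\eta,\xi)$.

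One small imprecision worth flagging: your description of (c) as ``the interchange axioms $F(\omega\odot\zeta)=F(\omega)\odot F(\zeta)$'' does not type-check, since $\omega$ lives in $\Bb$ and $\zeta$ in $\Aa$. The four equations in (c) are of two different natures: the ones involving $\omega$ are the naturality-with-respect-to-2-cells axioms that are \emph{part of the definition} of the 1h-cell $(-,F)$ and the 1v-cell $(-,U)$ in $\llbracket\Bb,\Cc\rrbracket$; the ones involving $\zeta$ are the modification axioms that are \emph{part of the definition} of the 2-cell $F(\zeta)=(-,\zeta)$ in $\llbracket\Bb,\Cc\rrbracket$. None of them arise from functoriality of $F$ on 2-cells (that is already absorbed in (b)); rather they come from the well-definedness of $F$'s images as cells of $\llbracket\Bb,\Cc\rrbracket$. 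Your subsequent clause ``naturality/coherence of horizontal and vertical transformations with respect to 2-cells $\omega$ in $\Bb$ and $\zeta$ in $\Aa$'' is closer to the mark, so the argument goes through once this is straightened out.
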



In analogy to \cite[Section 4.2]{GPS} we set: 

\begin{defn} \delabel{H dbl}
The characterization in the above Proposition gives rise to an application from the Cartesian product of double categories 
$H: \Aa\times\Bb\to\Cc$ such that $H(A,-)=(-, A)$ and $H(-, B)=(B,-)$. Such an application of double categories we 
will call {\em cubical double functor}. 
\end{defn}

We may now describe a double category $\Aa\ot\Bb$ by reading off the structure of the image double category $F(\Aa)(\Bb)$  
for any double functor $F:\Aa\to\llbracket\Bb,\Aa\ot\Bb\rrbracket$ in the right hand-side of \equref{nat iso} using the 
characterization of a double functor before \prref{char df}. With the notation $F(x)(y)=(y,x)=:x\ot y$ for any 0-, 1h-, 1v- or 2-cells 
$x$ of $\Aa$ and $y$ of $\Bb$ we obtain that a double category $\Aa\ot\Bb$ consists of the following: \\
\u{objects}: $A\ot B$ for objects $A\in\Aa, B\in\Bb$; \\ 
\u{1h-cells}: $A\ot f, F\ot B$ and horizontal compositions of such (modulo associativity and unity constraints) obeying the following rules: 
$$(A\ot f')\odot(A\ot f)=A\ot (f'\odot f), \quad (F\s'\ot B)\odot(F\ot B)=(F\s'\odot F)\ot B, \quad A\ot 1_B=1_{A\ot B}=1_A\ot B$$
where $f,f'$ are 1h-cells of $\Bb$ and $F, F\s'$ 1h-cells of $\Aa$; \\
\u{1v-cells}: $A\ot u, U\ot B$ and vertical compositions of such obeying the following rules: 
$$(A\ot u')(A\ot u)=A\ot u'u, \quad (U'\ot B)(U\ot B)=U'U\ot B, \quad A\ot 1^B=1^{A\ot B}=1^A\ot B$$
where $u,u'$ are 1v-cells of $\Bb$ and $U,U'$ 1v-cells of $\Aa$; \\
\u{2-cells}: $A\ot\omega, \zeta\ot B$:
$$
\bfig
\putmorphism(-150,50)(1,0)[A\ot B`A\ot B'`A\ot f]{600}1a
\putmorphism(-150,-400)(1,0)[A\ot\tilde B`A'\ot\tilde B' `A\ot g]{640}1a
\putmorphism(-180,50)(0,-1)[\phantom{Y_2}``A\ot u]{450}1l
\putmorphism(450,50)(0,-1)[\phantom{Y_2}``A\ot v]{450}1r
\put(0,-180){\fbox{$A\ot \omega$}}
\efig
\qquad
\bfig
\putmorphism(-150,50)(1,0)[A\ot B`A'\ot B`F\ot B]{600}1a
\putmorphism(-150,-400)(1,0)[\tilde A\ot B`\tilde A'\ot B `G\ot B]{640}1a
\putmorphism(-180,50)(0,-1)[\phantom{Y_2}``U\ot B]{450}1l
\putmorphism(450,50)(0,-1)[\phantom{Y_2}``V\ot B]{450}1r
\put(0,-180){\fbox{$\zeta\ot B$}}
\efig
$$
where $\omega$ and $\zeta$ are as in \equref{omega-zeta},
and four types of 2-cells coming from the 2-cells of point 2. in \prref{char df}: 
vertically invertible globular 2-cell $F\ot f: (A'\ot f)\odot(F\ot B) \Rightarrow(F\ot B')\odot(A\ot f)$, 
horizontally invertible globular 2-cell $U\ot u: (\tilde A\ot u)(U\ot B)\Rightarrow(U\ot\tilde B)(A\ot u)$, 
2-cells $F\ot u$ and $U\ot f$, and the horizontal and vertical compositions of these 
(modulo associativity and unity constraints in the horizontal direction and the interchange law) 
subject to the rules induced by a), b) and c) of point 2. in \prref{char df} and the following ones: 
$$A\ot(\omega'\odot\omega)=(A\ot\omega')\odot(A\ot\omega), \quad (\zeta'\odot\zeta)\ot B=(\zeta'\ot B)\odot(\zeta\ot B),$$
$$A\ot(\omega'\omega)=(A\ot\omega')(A\ot\omega), \quad (\zeta'\zeta)\ot B=(\zeta'\ot B)(\zeta\ot B),$$
$$A\ot\Id_f=\Id_{A\ot f}, \quad \Id_F\ot B=\Id_{F\ot B}, \quad A\ot\Id^f=\Id^{A\ot f}, \quad \Id^F\ot B=\Id^{F\ot B}.$$

\subsection{A monoidal embedding of $(Gray,\ot)$ into $(Dbl,\ot)$ } \sslabel{mon embed}

Let $E: (Gray,\ot)\hookrightarrow(Dbl,\ot)$ denote the embedding functor which to a 2-category assigns a strict double category whose 
all vertical 1-cells are identities and whose 2-cells are vertically globular cells. Then $E$ is a left adjoint to the functor that to a strict double category 
assigns its underlying horizontal 2-category. Let us denote by $\C=(Gray,\ot)$ and by $\D=\im(E)\subseteq(Dbl,\ot)$, the image category by $E$, 
then the corestriction of $E$ to $\D$ is the identity functor 
\begin{equation}\eqlabel{id fun}
F:\C\to\D.
\end{equation}

In order to examine the monoidality of $F$ let us first consider 
an assignment $t: F(\A\ot\B)\to F(\A)* F(\B)$ for two 2-categories $\A$ and $\B$, where $*$ denotes some monoidal product in the category of 
strict double categories which a priori could be the Cartesian one or the one from the monoidal category $(Dbl,\ot)$. 

Observe that given 1-cells $f:A\to A'$ in $\A$ and $g: B\to B'$ in $\B$ the composition 1-cells 
$(f\ot B')\odot(A\ot g)$ and $(A'\ot g)\odot(f\ot B)$ in $\A\ot\B$ are not equal both in $(Gray,\ot)$ and in $(Dbl,\ot)$. 
This means that their images $F\big((f\ot B')\odot(A\ot g)\big)$ and $F\big((A'\ot g)\odot(f\ot B)\big)$ are different as 1h-cells of the double category 
$F(\A\ot\B)$. Now if we map these two images by $t$ into the Cartesian product $F(\A)\times F(\B)$, we will get in both cases the 1h-cell 
$(f,g)$. Then $t$ with the codomain in the Cartesian product is a bad candidate for the monoidal structure of the identity functor $F$. 
This shows that the Cartesian monoidal product on the category of strict double categories is not a good choice for a monoidal structure 
if one wants to embed the Gray category of 2-categories into the latter category. In contrast, if the codomain of $t$ is the monoidal product 
of $(Dbl,\ot)$, we see that $t$ is identity on these two 1-cells. 

Similar considerations and comparing the monoidal product from \cite[Theorem I.4.9]{Gray} 
in $(Gray,\ot)$ to the one after \deref{H dbl} above in $(Dbl,\ot)$, show that for the candidate for (the one part of) a monoidal structure 
on the identity functor $F$ we may take the identity $s=\Id: F(\A\ot\B)\to F(\A)\ot F(\B)$, and that it is indeed a strict double functor of 
strict double categories. 
(Observe that the hexagonal and two square relations for the double functor $s$ concern 1h-cells, and that the associativity and unity constraints 
appearing in these relations are those 
for the composition of 1-cells in a 2-category and 1h-cells in a strict double category and that they all are identities). 
For the other part of a monoidal structure on $F$, namely $s_0: F(*_2)\to *_{Dbl}$, where $*_2$ is the trivial 2-category with a single object, 
and similarly $*_{Dbl}$ is the trivial double category, it is clear that we again may take identity. Now the hexagonal and two square relations 
for the monoidality of the functor $(F, s, s_0)$ concern 0-cells, and since both $s$ and $s_0$ are identities they come down to checking if 
$$F(\alpha_1)=\alpha_2, \quad F(\lambda_1)=\lambda_2, \quad\text{and}\quad F(\rho_1)=\rho_2$$
where the monoidal constraints with indexes 1 are those from $\C$ and those with indexes 2 from $\D$. 

\medskip

Given any monoidal closed category $(\M, \ot, I, \alpha, \lambda, \rho)$ in \cite[Section 4.1]{Gabi} the author constructs a mate 
\begin{equation} \eqlabel{mate a}
a^C_{A,B}: [A\ot B, C]\to[A,[B,C]]
\end{equation} 
for $\alpha$ under the adjunctions $(-\ot X, [X,-])$, for $X$ taking to be $A,B$ and $A\ot B$, 
and then she constructs a mate of $a$: 
\begin{equation} \eqlabel{l-a}
l^C_{A,B}: \big([A,B]\stackrel{[\Epsilon^C_A,1]}{\rightarrow} [[C,A]\ot C, B] \stackrel{a^B}{\rightarrow} [[C,A],[C,B]]\big)
\end{equation}
where $\Epsilon$ is the counit of the adjunction. By the mate correspondence one gets: 
\begin{equation} \eqlabel{a-l}
a^C_{A,B}: \big( [A\ot B, C]\stackrel{l^B_{A\ot B,C} }{\rightarrow} [[B,A\ot B],[B,C]] \stackrel{[\eta^B_A,1]}{\rightarrow} [A,[B,C]]\big)
\end{equation}
where $\eta$ is the unit of the adjunction. As above for the monoidal constraints, let us write $l_i, a_i, \Epsilon_i$ and $\eta_i$ 
with $i=1,2$ for the corresponding 2-functors in $\C$ (with $i=1$), respectively double functors in $\D$ (with $i=2$). 
Comparing the description of $l_1$ from \cite[Section 4.7]{Gabi}, obtained as indicated above: $\alpha_1$ determines $a_1$, which in turn determines 
$l_1$ by \equref{l-a}, to the construction of $l_2$ in \cite[Section 2.4]{Gabi}, on one hand, and the well-known 2-category $[\A,\B]=\Fun(\A,\B)$ 
of 2-functors between 2-categories $\A$ and $\B$, pseudo natural transformations and modifications (see {\em e.g.} \cite[Section 5.1]{GP:Fram}, \cite{Ben}) 
to the definition of the double category $\llbracket\Aa,\Bb\rrbracket$ from \cite[Section 2.2]{Gabi} for double categories $\Aa,\Bb$, 
on the other hand, one immediately obtains:

\begin{lma}
For two 2-categories $\A$ and $\B$, functor $F$ from \equref{id fun} and $l_1$ and $l_2$ as above, it is: 
\begin{itemize}
\item $F(l_1)=l_2$,
\item $F([\A,\B])=\llbracket F(\A),F(\B)\rrbracket$.
\end{itemize}
\end{lma}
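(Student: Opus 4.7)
The plan is to establish the second bullet first, since the first will follow essentially by naturality once the internal homs are identified.

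For the second bullet, I would unpack the two sides. The 2-category $[\A,\B]=\Fun(\A,\B)$ has 2-functors as objects, pseudo natural transformations as 1-cells, and modifications as 2-cells; applying $F$ makes it into the strict double category with these as objects, 1h-cells, and vertically globular 2-cells respectively, and with only identity 1v-cells. On the other side, the double category $\llbracket F(\A),F(\B)\rrbracket$ from \cite[Section 2.2]{Gabi} has double (pseudo) functors as objects, horizontal and vertical transformations as 1h- and 1v-cells, and modifications as 2-cells. The key observation is that when source and target lie in the image of $F$---so they carry only identity 1v-cells and only vertically globular 2-cells---every vertical transformation is forced to be an identity, and the data of a horizontal transformation reduces exactly to that of a pseudo natural transformation between the underlying 2-functors. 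I would make this correspondence explicit componentwise and verify that modifications match on both sides, yielding the equality of double categories.

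For the first bullet, I would use that both $l_1$ and $l_2$ are built by the same formula \equref{l-a}, as the mate of the associator under the closed monoidal adjunctions. Since $F$ is strict monoidal (by the discussion preceding the lemma) and preserves internal homs by the second bullet, it sends the adjunctions $(-\ot X,[X,-])$ in $\C$ to the adjunctions $(-\ot F(X),\llbracket F(X),-\rrbracket)$ in $\D$, carrying the units and counits $\eta_1,\Epsilon_1$ to $\eta_2,\Epsilon_2$. By naturality of the mate correspondence, applying $F$ to the composite defining $l_1$ produces precisely the composite defining $l_2$ in \cite[Section 2.4]{Gabi}, so $F(l_1)=l_2$.

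The main obstacle will be the componentwise dictionary in the second bullet: one must match the family of 2-cell components of a pseudo natural transformation (indexed by 1-cells of $\A$) with the globular 2-cell components of a horizontal transformation in the sense of \cite{Gabi}, and check that the naturality squares of the former translate into the coherence relations of the latter. Once this dictionary, together with the (trivial) collapse of vertical transformations and the matching for modifications, is installed, the identity of double categories is immediate, and the mate argument closes out the first bullet.
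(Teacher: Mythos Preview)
Your plan for the second bullet is correct and is exactly the paper's (omitted) argument: one matches the objects, 1-cells and 2-cells of the 2-category $[\A,\B]=\Fun(\A,\B)$ against those of B\"ohm's double category $\llbracket F(\A),F(\B)\rrbracket$ and observes that, when the target carries only identity 1v-cells, the vertical part collapses and the horizontal part is precisely the pseudo-natural-transformation data.

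Your plan for the first bullet, however, is circular in the logic of the surrounding section. You write that ``$F$ is strict monoidal (by the discussion preceding the lemma)'', but that discussion only establishes that the structure maps $s$ and $s_0$ are identities; it explicitly leaves the identities $F(\alpha_1)=\alpha_2$, $F(\lambda_1)=\lambda_2$, $F(\rho_1)=\rho_2$ as the remaining goal. The lemma is the first step toward that goal: the paper uses $F(l_1)=l_2$ together with $F(\eta_1)=\eta_2$ and \equref{a-l} to deduce $F(a_1)=a_2$, and only then (via the next Lemma) concludes $F(\alpha_1)=\alpha_2$. Your mate argument runs this dependency in reverse: to apply \equref{l-a} and get $F(l_1)=l_2$ you need $F(a_1)=a_2$, and since mates are bijective this is equivalent to $F(\alpha_1)=\alpha_2$ --- the very thing the section is working toward. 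So ``naturality of the mate correspondence'' does not do the work here; it merely shows the three equalities $F(l_1)=l_2$, $F(a_1)=a_2$, $F(\alpha_1)=\alpha_2$ are equivalent, without establishing any of them.

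The paper's approach is instead a direct comparison: $l_1$ has an explicit description in \cite[Section~4.7]{Gabi} (obtained there from $\alpha_1$ via \equref{mate a} and \equref{l-a}), and $l_2$ has an independent explicit construction in \cite[Section~2.4]{Gabi}; one writes both out on objects, 1-cells and 2-cells and sees they agree. That comparison is what you should do for the first bullet, parallel to what you already outlined for the second.
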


Because of the extent of the definitions and the detailed proofs we will omit them, we only record that the counits of the adjunctions 
$\Epsilon_i, i=1,2$ are basically given as evaluations and it is $F(\Epsilon_1)=\Epsilon_2$. The counits $\eta_i, i=1,2$ are defined in the natural way 
and it is also clear that $F(\eta_1)=\eta_2$. Now by the above Lemma and \equref{a-l} we get: $F(a_1)=a_2$. Then from the next Lemma 
we get that $F(\alpha_1)=\alpha_2$:

\begin{lma}
Suppose that there is an embedding functor $F:\C\to\D$ between monoidal closed categories which fulfills:
\begin{enumerate} [a)]
\item $F(X)\ot F(Y)=F(X\ot Y)$ for objects $X,Y\in\C$,
\item $F([X,Y])=[F(X),F(Y)]$,
\item $F(a_\C)=a_\D$, where the respective  $a$'s are given through \equref{mate a},
\end{enumerate} 
then it is $F(\alpha_\C)=\alpha_\D$, being $\alpha$'s the respective associativity constraints. 
\end{lma}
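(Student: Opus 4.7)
The plan is to write $\alpha$ explicitly as a composite built only from the ingredients that $F$ is already known to preserve, and then to conclude by functoriality.

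First, I would recall the mate correspondence used in \equref{mate a}: by construction $a^C_{A,B}$ is the mate of the associativity isomorphism $\alpha_{A,B,C}$ under the three adjunctions $(-\ot X)\dashv[X,-]$ with $X\in\{A,B,A\ot B\}$. This correspondence is a bijection, so it can be inverted to produce $\alpha_{A,B,C}$ from $a^C_{A,B}$ as a composite in which the only ingredients are instances of $a$, the counits $\Epsilon$, the units $\eta$, identity arrows, and $\ot$ applied to such arrows. Concretely, specialise $a$ to $C:=A\ot(B\ot C)$ and apply it to the transpose $\tilde{\id}:A\to[B\ot C,A\ot(B\ot C)]$ of $\id_{A\ot(B\ot C)}$ under $(-\ot(B\ot C))\dashv[B\ot C,-]$; then transpose the result back down through the two adjunctions $(-\ot B)\dashv[B,-]$ and $(-\ot A)\dashv[A,-]$. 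The outcome is exactly $\alpha_{A,B,C}$, and this recipe uses only $a$, $\Epsilon$, $\eta$, identities, and the monoidal product.

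Second, I would apply $F$ term by term to this composite. By hypothesis (a) the functor preserves $\ot$, by (b) it preserves the internal hom $[-,-]$, and by (c) it preserves $a$. Moreover, by the remark made in the paragraph immediately preceding the lemma, $F(\Epsilon_{\C})=\Epsilon_{\D}$ and $F(\eta_{\C})=\eta_{\D}$, since the counits are evaluations and the units are defined canonically in terms of $\ot$ and $[-,-]$. Since $F$ is a functor it respects composition and identities, so the image under $F$ of the composite in $\C$ is the corresponding composite in $\D$. By the same mate formula, that composite equals $\alpha_{\D}$, giving $F(\alpha_{\C})=\alpha_{\D}$.

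The main obstacle, which is more notational than conceptual, is to display the mate-correspondence formula for $\alpha$ in a form whose ingredients are visibly only those listed in (a)--(c) and the evaluations/units recorded just before the lemma; once this is done the argument reduces to pure functoriality. No coherence subtleties intervene, because the lemma only asserts pointwise equality of two given associativity constraints, not a coherence theorem for the pair $(F,s,s_0)$.
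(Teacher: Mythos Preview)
Your argument is correct and is essentially the same as the paper's: both rest on the fact that the mate correspondence of \equref{mate a} is a bijection built from $\ot$, $[-,-]$, and the adjunction (co)units, so any functor preserving these takes $\alpha_\C$ to $\alpha_\D$ once it is known to take $a_\C$ to $a_\D$. The only difference is presentational. The paper packages the mate bijection as the commuting square of hom-sets labeled~(1), applies $F$ to obtain square~(2), and then reads off $F(\alpha_\C)=\alpha_\D$ from assumption~(c) and the uniqueness of mates; you instead unwind the inverse mate correspondence into an explicit composite and push $F$ through it term by term. You are also more explicit than the paper about the need for $F(\Epsilon_\C)=\Epsilon_\D$ and $F(\eta_\C)=\eta_\D$: the paper's passage from diagram~(1) to diagram~(2) silently uses exactly this (so that the horizontal adjunction isomorphisms match up), whereas you cite the paragraph preceding the lemma for it. Strictly speaking that paragraph records those equalities only for the particular $F$ of \equref{id fun}, not as part of the lemma's hypotheses, but the paper's own proof relies on the same unstated assumption.
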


\begin{proof}
By the mate construction in \equref{mate a} we have a commuting diagram:
$$
\bfig
\putmorphism(-150,50)(1,0)[\C(A\ot(B\ot C),D)`\C(A, [B\ot C,D])`\iso]{1000}1a
\putmorphism(-150,-400)(1,0)[\C((A\ot B)\ot C,D) `\C(A,[B,[C,D]]) `\iso]{1040}{-1}a
\putmorphism(-120,50)(0,-1)[\phantom{Y_2}``\C(\alpha_\C,id)]{450}1l
\putmorphism(870,50)(0,-1)[\phantom{Y_2}``\C(id,a_\C)]{450}1r
\put(300,-180){\fbox{$(1)$}}
\efig
$$
Applying $F$ to it, by the assumptions $a)$ and $b)$ we obtain a commuting diagram:
$$
\bfig
\putmorphism(-150,50)(1,0)[\D(F(A)\ot(F(B)\ot F(C)),F(D))`\D(F(A), [F(B)\ot F(C),F(D)])`\iso]{1600}1a
\putmorphism(-150,-400)(1,0)[\D((F(A)\ot F(B))\ot F(C),F(D)) `\D(F(A),[F(B),[F(C),F(D)]]) `\iso]{1640}{-1}a
\putmorphism(-80,50)(0,-1)[\phantom{Y_2}``\D(F(\alpha_\C),id)]{450}1l
\putmorphism(1490,50)(0,-1)[\phantom{Y_2}``\D(id,F(a_\C))]{450}1r
\put(580,-180){\fbox{$(2)$}}
\efig
$$
Now by the assumption $c)$ and the mate construction in \equref{mate a} it follows $F(\alpha_\C)=\alpha_\D$. 
\end{proof}

So far we have proved that for the categories $\C$ and $\D$ as in \equref{id fun} we have $F(\alpha_\C)=\alpha_\D$. 
For the unity constraints $\rho_i, \lambda_i, i=1,2$ in the cases of both categories 
(see Sections 3.3 and 4.7 of \cite{Gabi}) it is: 
$$\rho_i^A=\Epsilon_{i,A}^{1_i}\comp(c_i\ot id_{1_i})\quad\text{and}\quad\lambda_i^A=\Epsilon_{i,A}^A\comp(1_A\ot id_A)$$
where $c_i: A\to[1_i,A]$ is the canonical isomorphism and $1_A: 1_i\to[A,A]$ the 2-functor (pseudofunctor) 
sending the single object of the terminal 2-category $1_1$ (double category $1_2$) to the identity 2-functor (pseudofunctor) $A\to A$ 
(here we have used the same notation for objects $A$ and inner home objects both in $\C$ and in $\D$). Then it is clear that also 
$F(\lambda_1)=\lambda_2$ and $F(\rho_1)=\rho_2$, which finishes the proof that the functor $F:\C\to\D$ is a monoidal embedding. 
Motivated by our findings from now on we will call a monoid in $(Dbl,\ot)$ a {\em monoidal double category due to B\"ohm}.

\begin{prop} \prlabel{mon embed}
The category $(Gray,\ot)$ monoidally embeds into $(Dbl,\ot)$, where the respective monoidal structures are those from \cite{Gray} and \cite{Gabi}. 
Consequently, a monoid in $(Gray,\ot)$ is a monoid in $(Dbl,\ot)$, and a monoidal bicategory can be seen as a monoidal double category 
due to B\"ohm. 
\end{prop}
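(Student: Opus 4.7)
The plan is to assemble the monoidal embedding directly from the pieces established in the preceding discussion. First I would fix the candidate for the embedding: the identity functor $F:\C\to\D$ from \equref{id fun}, equipped with the monoidal coherence data $s=\Id:F(\A\ot\B)\to F(\A)\ot F(\B)$ and $s_0=\Id:F(*_2)\to *_{Dbl}$. To show that $(F,s,s_0)$ is a monoidal functor, one needs to verify (i) that $s$ is a strict double functor of strict double categories, and (ii) that the hexagon and two triangle coherence axioms hold. Since $s$ and $s_0$ are identities, the coherence axioms reduce to the three identifications
\[
F(\alpha_1)=\alpha_2,\qquad F(\lambda_1)=\lambda_2,\qquad F(\rho_1)=\rho_2,
\]
and so the task splits into checking each of these.

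To verify (i), I would compare the explicit description of $\A\ot\B$ from \ssref{mon str} against the one for 2-categories in \cite{Gray}: for 2-categories $\A,\B$ the formulas defining 1h-cells, their composition, the globular 2-cells $F\ot f$, and the interchange rules all match on the nose, while the 1v-cell and non-globular 2-cell data in $F(\A)\ot F(\B)$ are all trivial because $F(\A)$ and $F(\B)$ lie in the image of $E$. Hence $s$ is well-defined and strict. For the associativity identification I would invoke the two Lemmas already proved: the first gives $F([\A,\B])=\llbracket F(\A),F(\B)\rrbracket$ and $F(l_1)=l_2$; combining this with the observation $F(\Epsilon_1)=\Epsilon_2$ and $F(\eta_1)=\eta_2$ and the mate formula \equref{a-l}, one gets $F(a_1)=a_2$, whence the second Lemma yields $F(\alpha_1)=\alpha_2$.

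For the unity identifications I would use the explicit mate-theoretic formulas for $\rho_i,\lambda_i$ in terms of the counits $\Epsilon_i$, the canonical isomorphism $c_i$, and the unit (pseudo)functor $1_A$. Since $F$ is the identity on the image category, each of these three ingredients is preserved by $F$ on the nose (the canonical isomorphism and the unit pseudofunctor have completely analogous descriptions in $(Gray,\ot)$ and in $(Dbl,\ot)$ when restricted to $\D$), so $F(\lambda_1)=\lambda_2$ and $F(\rho_1)=\rho_2$ follow immediately. This proves the first assertion. The second assertion then follows by functoriality of monoidal embeddings on monoids; the last sentence about monoidal bicategories is the coherence theorem of \cite{GPS}, which identifies monoidal bicategories with one-object Gray-categories, i.e.\ monoids in $(Gray,\ot)$, which by the first assertion sit inside monoids in $(Dbl,\ot)$.

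The main technical obstacle I would anticipate is the identification $F([\A,\B])=\llbracket F(\A),F(\B)\rrbracket$ used in the associativity step: one must check that under the embedding $E$ the 2-category $\Fun(\A,\B)$ of 2-functors, pseudo natural transformations and modifications coincides cell-by-cell with the double category $\llbracket F(\A),F(\B)\rrbracket$ restricted to the part having trivial vertical 1-cells. This is essentially a bookkeeping verification comparing the two constructions of internal homs, and is the place where the difference between the Cartesian and the B\"ohm monoidal product in $Dbl$ (emphasized earlier in the section) is critical, since only the latter yields an internal hom compatible with the 2-categorical $\Fun(\A,\B)$.
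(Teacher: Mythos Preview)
Your proposal is correct and follows essentially the same route as the paper: the same choice of $(F,s,s_0)$ with identity structure maps, the same reduction of the coherence axioms to $F(\alpha_1)=\alpha_2$, $F(\lambda_1)=\lambda_2$, $F(\rho_1)=\rho_2$, the same use of the two Lemmas (via $F(l_1)=l_2$, $F([\A,\B])=\llbracket F(\A),F(\B)\rrbracket$, $F(\Epsilon_1)=\Epsilon_2$, $F(\eta_1)=\eta_2$ and the mate formula \equref{a-l}) to obtain $F(a_1)=a_2$ and hence $F(\alpha_1)=\alpha_2$, and the same explicit formulas for $\rho_i,\lambda_i$ to conclude. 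The consequences for monoids and monoidal bicategories are drawn exactly as in the paper.
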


\bigskip


\subsection{A monoid in $(Dbl,\ot)$ } \sslabel{Gabi's monoid}

In \cite[Section 4.3]{Gabi} a complete list of data 
and conditions defining the structure of a monoid $\Aa$ in $(Dbl,\ot)$ is given. As a part of this structure we have the following occurrence. 
As a monoid in $(Dbl,\ot)$, we have that $\Aa$ is equipped with a strict double functor 
$M: \Aa\ot\Aa\to\Aa$. Since in the monoidal product $\Aa\ot\Aa$ 
horizontal and vertical 1-cells of the type $(f\ot 1)(1\ot g)$ and $(1\ot g)(f\ot 1)$ are not equal (here juxtaposition denotes the 
corresponding composition of the 1-cells), one can fix a choice for how to define an image 1-cell $f\oast g$ by $M$ 
(either $M\big((f\ot 1)(1\ot g)\big)$ or $M\big((1\ot g)(f\ot 1)\big)$). Any of the two choices 
yields a {\em double pseudo} functor from the {\em Cartesian} product double category 
\begin{equation} \eqlabel{oast}
\oast:\Aa\times\Aa\to\Aa.
\end{equation} 
Let us see this. 
If we take two pairs of horizontal 1-cells $(h,k),(h',k')$ in $A\times A$, for the images under $\oast$, fixing the second choice above, 
we get $(h'h)\oast(k'k)=M\big((1\ot k'k)(h'h\ot 1)\big)=
M(1\ot k')M(1\ot k)M(h'\ot 1)M(h\ot 1)$, whereas 
$(h'\oast k')(h\oast k)=M\big((1\ot k')(h'\ot 1)\big)M\big((1\ot k)(h\ot 1)\big)=M(1\ot k')M(h'\ot 1)M(1\ot k)M(h\ot 1)$. 
So, the two images differ in the flip on the middle factors. The analogous situation happens on the vertical level, thus the functor 
$\oast$ preserves both vertical and horizontal 1-cells only up to an isomorphism 2-cell. This makes it a double pseudo functor due to 
\cite[Definition 6.1]{Shul1}. 

As outlined at the end of \cite[Section 4.3]{Gabi}, monoids in (the non-Cartesian monoidal category) $(Dbl,\ot)$ are monoids in the Cartesian 
monoidal category $(Dbl,\oast)$ of strict double categories and double pseudo functors (in the sense of \cite{Shul1}).

\subsection{Monoidal double categories as intercategories and beyond} \sslabel{beyond}

A monoidal double category in \cite{Shul} is a pseudomonoid in the 2-category $PsDbl$ of pseudo double categories, pseudo double functors 
and vertical transformations, seen as a monoidal 2-category with the Cartesian product. As such it is a particular case of an intercategory \cite{GP}. 

An intercategory is a pseudocategory ({\em i.e.} weakly internal category) in the 2-category $LxDbl$ of pseudo 
double categories, lax double functors and horizontal transformations. 
It consists of pseudodouble categories $\Dd_0$ and $\Dd_1$ and pseudo double functors 
$s,t: \Dd_1\to\Dd_0, u: \Dd_0\to\Dd_1, m:\Dd_1\times_{\Dd_0}\Dd_1\to\Dd_1$ satisfying the corresponding properties. One may denote this structure formally by 
$$
 \Dd_1\times_{\Dd_0}\Dd_1\triarrows \Dd_1\tripplearrow \Dd_0
$$
where $\Dd_1\times_{\Dd_0}\Dd_1$ is a certain 2-pullback and the additional two arrows $\Dd_1\times_{\Dd_0}\Dd_1\to\Dd_1$ stand for the two projections. 
When $\Dd_0$ is the trivial double category 1 (the terminal object in $LxDbl$, consisting of a single object *), setting $\Dd_1=\Dd$ one has that 
$\Dd\times\Dd$ is the Cartesian product of pseudo double categories. 

As a pseudomonoid in $PsDbl$, a monoidal double category of Shulman consists of a pseudo double category $\Dd$ and pseudo 
double functors $m: \Dd\times\Dd\to\Dd$ and $u: 1\to\Dd$ which satisfy properties that make $\Dd$ precisely an intercategory 
$$ \Dd\times\Dd\triarrows \Dd\tripplearrow 1,$$
as explained in \cite[Section 3.1]{GP:Fram}. 

\bigskip

Motivated by \prref{mon embed} and the observation at the end of \cite[Section 4.3]{Gabi}, that there seems to be no easy way to regard a monoid in 
$(Dbl,\ot)$ as a suitably degenerated intercategory of \cite{GP}, we want now to upgrade the Cartesian monoidal category $(Dbl,\oast)$ 
from the end of \ssref{Gabi's monoid} to a 2-category, so to obtain an intercategory-type notion which would 
include monoidal double categories of B\"ohm. 

\medskip

This motivates our next section, in which we will introduce 2-cells and ``unfortunately'' rather than a 2-category we will obtain a 
tricategory of strict double categories whose 1-cells are double pseudo functors of Shulman. Since 1-cells of the 2-category $LxDbl$ 
(considered by Grandis and Par\'e to define intercategories) are lax double functors (they are lax in one and strict in the other direction),  and our 1-cells are double pseudo functors, that is, they are given by isomorphisms in both directions, we can not 
generalize intercategories this way, rather, we will propose an alternative notion to intercategories which will include 
most of the examples of intercategories treated in \cite{GP:Fram} but not duoidal categories, as they rely on lax functors, rather than pseudo ones.




\section{Tricategory of strict double categories and double pseudo functors} 

Let us denote the tricategory from the title of this section by $\DblPs$. 
As we are going to use double pseudo functors of \cite{Shul1}, which preserve compositions of 1-cells and identity 1-cells in 
both horizontal and vertical direction up to an isomorphism (and not strictly in one of the two directions as pseudo double functors 
say in \cite{Shul}), 
we have to introduce accordingly horizontal and vertical transformations. 
A pair consisting of a horizontal and a vertical pseudonatural transformation, which we define next, will be a part of the data 
constituting a 2-cell of the tricategory $\DblPs$. 
Note that while $PsDbl$ usually denotes a category or a 2-category of {\em pseudo} double categories and pseudo double functors, 
that is, in which 0- and 1-cells are weakened, in the notation $\DblPs$ we wish to stress that both 1- and 2-cells are weakened in both 
directions so to deal with {\em double pseudo} functors.

\subsection{Towards the 2-cells}

For the structure of a double pseudo functor we use the same notation as in \cite[Definition 6.1]{Shul1} with the only difference 
that 0-cells we denote by $A,B...$ and 1v-cells by $u,v...$. To simplify the notation, we will denote by juxtaposition the 
compositions of both 1h- and 1v-cells, from the notation of the 1-cells it will be clear which kind of 1-cells and therefore 
composition is meant. Let $\Aa,\Bb,\Cc$ be strict double categories throughout. 

\begin{defn} \delabel{hor psnat tr}
A {\em horizontal pseudonatural transformation} between double pseudo functors $F,G: \Aa\to\Bb$ consists of the following:
\begin{itemize}
\item for every 0-cell $A$ in $\Aa$ a 1h-cell $\alpha(A):F(A)\to G(A)$ in $\Bb$,
\item for every 1v-cell $u:A\to A'$ in $\Aa$ a 2-cell in $\Bb$:
$$
\bfig
\putmorphism(-150,50)(1,0)[F(A)`G(A)`\alpha(A)]{560}1a
\putmorphism(-150,-320)(1,0)[F(A')`G(A')`\alpha(A')]{600}1a
\putmorphism(-180,50)(0,-1)[\phantom{Y_2}``F(u)]{370}1l
\putmorphism(410,50)(0,-1)[\phantom{Y_2}``G(u)]{370}1r
\put(30,-110){\fbox{$\alpha_u$}}
\efig
$$
\item 
for every 1h-cell $f:A\to B$  in $\Aa$ there is a 2-cell in $\Bb$:
$$
\bfig
 \putmorphism(-170,500)(1,0)[F(A)`F(B)`F(f)]{540}1a
 \putmorphism(360,500)(1,0)[\phantom{F(f)}`G(B) `\alpha(B)]{560}1a
 \putmorphism(-170,120)(1,0)[F(A)`G(A)`\alpha(A)]{540}1a
 \putmorphism(360,120)(1,0)[\phantom{G(B)}`G(A) `G(f)]{560}1a
\putmorphism(-180,500)(0,-1)[\phantom{Y_2}``=]{380}1r
\putmorphism(940,500)(0,-1)[\phantom{Y_2}``=]{380}1r
\put(280,310){\fbox{$\delta_{\alpha,f}$}}
\efig
$$
\end{itemize}
so that the following are satisfied:
\begin{enumerate}
\item pseudonaturality of 2-cells:
for every 2-cell in $\Aa$
$\bfig
\putmorphism(-150,50)(1,0)[A` B`f]{400}1a
\putmorphism(-150,-270)(1,0)[A'`B' `g]{400}1b
\putmorphism(-170,50)(0,-1)[\phantom{Y_2}``u]{320}1l
\putmorphism(250,50)(0,-1)[\phantom{Y_2}``v]{320}1r
\put(0,-140){\fbox{$a$}}
\efig$ 
the following identity in $\Bb$ must hold:
$$
\bfig
\putmorphism(-150,500)(1,0)[F(A)`F(B)`F(f)]{600}1a
 \putmorphism(450,500)(1,0)[\phantom{F(A)}`G(B) `\alpha(B)]{640}1a

 \putmorphism(-150,50)(1,0)[F(A')`F(B')`F(g)]{600}1a
 \putmorphism(450,50)(1,0)[\phantom{F(A)}`G(B') `\alpha(B')]{640}1a

\putmorphism(-180,500)(0,-1)[\phantom{Y_2}``F(u)]{450}1l
\putmorphism(450,500)(0,-1)[\phantom{Y_2}``]{450}1r
\putmorphism(300,500)(0,-1)[\phantom{Y_2}``F(v)]{450}0r
\putmorphism(1100,500)(0,-1)[\phantom{Y_2}``G(v)]{450}1r
\put(0,260){\fbox{$F(a)$}}
\put(700,270){\fbox{$\alpha_v$}}

\putmorphism(-150,-400)(1,0)[F(A')`G(A') `\alpha(A')]{640}1a
 \putmorphism(450,-400)(1,0)[\phantom{A'\ot B'}` G(B') `G(g)]{680}1a

\putmorphism(-180,50)(0,-1)[\phantom{Y_2}``=]{450}1l
\putmorphism(1120,50)(0,-1)[\phantom{Y_3}``=]{450}1r
\put(320,-200){\fbox{$\delta_{\alpha,g}$}}

\efig
\quad=\quad
\bfig
\putmorphism(-150,500)(1,0)[F(A)`F(B)`F(f)]{600}1a
 \putmorphism(450,500)(1,0)[\phantom{F(A)}`G(B) `\alpha(B)]{680}1a
 \putmorphism(-150,50)(1,0)[F(A)`G(A)`\alpha(A)]{600}1a
 \putmorphism(450,50)(1,0)[\phantom{F(A)}`G(B) `G(f)]{680}1a

\putmorphism(-180,500)(0,-1)[\phantom{Y_2}``=]{450}1r
\putmorphism(1100,500)(0,-1)[\phantom{Y_2}``=]{450}1r
\put(350,260){\fbox{$\delta_{\alpha,f}$}}
\put(650,-180){\fbox{$G(a)$}}

\putmorphism(-150,-400)(1,0)[F(A')`G(A') `\alpha(A')]{640}1a
 \putmorphism(490,-400)(1,0)[\phantom{F(A')}` G(B') `G(g)]{640}1a

\putmorphism(-180,50)(0,-1)[\phantom{Y_2}``F(u)]{450}1l
\putmorphism(450,50)(0,-1)[\phantom{Y_2}``]{450}1l
\putmorphism(610,50)(0,-1)[\phantom{Y_2}``G(u)]{450}0l 
\putmorphism(1120,50)(0,-1)[\phantom{Y_3}``G(v)]{450}1r
\put(40,-180){\fbox{$\alpha_u$}} 

\efig
$$

\item vertical functoriality: for any composable 1v-cells $u$ and $v$ in $\Aa$:
$$
\bfig
 \putmorphism(-150,500)(1,0)[F(A)`F(A) `=]{500}1a
 \putmorphism(450,500)(1,0)[` `\alpha(A)]{380}1a
\putmorphism(-180,500)(0,-1)[\phantom{Y_2}`F(A') `F(u)]{450}1l
\put(20,250){\fbox{$F^{vu}$}}
\putmorphism(-150,-400)(1,0)[F(A'')`F(A'') `=]{500}1a
\putmorphism(-180,50)(0,-1)[\phantom{Y_2}``F(v)]{450}1l
\putmorphism(380,500)(0,-1)[\phantom{Y_2}` `F(vu)]{900}1l
\put(600,45){\fbox{$\alpha^{vu}$} }
\putmorphism(940,500)(0,-1)[G(A)`G(A'')`G(vu)]{900}1r
\putmorphism(450,-400)(1,0)[``\alpha(A'')]{360}1a
\efig= 
\bfig
 \putmorphism(-150,500)(1,0)[F(A)`G(A)  `\alpha(A)]{600}1a
 \putmorphism(450,500)(1,0)[\phantom{(B,A)}` `=]{450}1a
\putmorphism(-180,500)(0,-1)[\phantom{Y_2}`F(A') `F(u)]{450}1l
\put(620,50){\fbox{$G^{vu}$}}
\putmorphism(-150,-400)(1,0)[F(A'')` `\alpha(A'')]{500}1a
\putmorphism(-180,50)(0,-1)[\phantom{Y_2}``F(v)]{450}1l
\putmorphism(450,50)(0,-1)[\phantom{Y_2}`G(A'')`G(v)]{450}1r
\putmorphism(450,500)(0,-1)[\phantom{Y_2}`G(A') `G(u)]{450}1r
\put(40,260){\fbox{$\alpha_u$}}
\putmorphism(-150,50)(1,0)[\phantom{(B, \tilde A)}``\alpha(A')]{500}1a
\putmorphism(1000,500)(0,-1)[G(A)`G(A'')`G(vu)]{900}1r
\putmorphism(480,-400)(1,0)[\phantom{F(A)}`\phantom{F(A)}`=]{500}1b
\put(40,-170){\fbox{$\alpha_v$}}
\efig
$$
and  
$$
\bfig
 \putmorphism(-150,250)(1,0)[F(A)`F(A)`=]{600}1a 
 \putmorphism(450,250)(1,0)[\phantom{A\ot B}`G(A) `\alpha(A)]{680}1a
\putmorphism(500,250)(0,-1)[\phantom{Y_2}``F(id_A)]{450}1l

 \putmorphism(-150,-200)(1,0)[F(A)`F(A)`=]{600}1a 
 \putmorphism(450,-200)(1,0)[\phantom{A\ot B}`G(A) `\alpha(A)]{680}1a
\putmorphism(-180,250)(0,-1)[\phantom{Y_2}``=]{450}1r
\putmorphism(1100,250)(0,-1)[\phantom{Y_2}``G(id_A)]{450}1r
\put(0,0){\fbox{$F^A$}}
\put(700,30){\fbox{$\alpha_{id_A}$}}
\efig
\quad=
\bfig
 \putmorphism(-100,250)(1,0)[F(A)`G(A)`\alpha(A)]{550}1a
 \putmorphism(450,250)(1,0)[\phantom{A\ot B}`G(A) `=]{680}1a
\putmorphism(500,250)(0,-1)[\phantom{Y_2}``=]{450}1l

 \putmorphism(-150,-200)(1,0)[F(A)`G(A)`\alpha(A)]{600}1a
 \putmorphism(450,-200)(1,0)[\phantom{F( B)}`G(A) `=]{680}1a 
\putmorphism(-180,250)(0,-1)[\phantom{Y_2}``=]{450}1r
\putmorphism(1100,250)(0,-1)[\phantom{Y_2}``G(id_A)]{450}1r
\put(0,0){\fbox{$\Id_{\alpha(A)}$}}
\put(700,30){\fbox{$G^A$}}
\efig
$$

\item horizontal functoriality for $\delta_{\alpha,-}$: for any composable 1h-cells $f$ and $g$ in $\Aa$ the 2-cell 
$\delta_{\alpha,gf}$ is given by: 
$$
\bfig
 \putmorphism(-130,500)(1,0)[F(A)`F(C)`F(gf)]{580}1a
 \putmorphism(450,500)(1,0)[\phantom{F(B)}`G(C) `\alpha(C)]{580}1a
 \putmorphism(-130,135)(1,0)[F(A)`G(A)`\alpha(A)]{580}1a
 \putmorphism(450,135)(1,0)[\phantom{F(B)}`G(C) `G(gf)]{580}1a
\putmorphism(-180,520)(0,-1)[\phantom{Y_2}``=]{380}1r
\putmorphism(1030,520)(0,-1)[\phantom{Y_2}``=]{380}1r
\put(330,300){\fbox{$\delta_{\alpha,gf}$}}
\efig= 
\bfig
\putmorphism(-150,900)(1,0)[F(A)`F(C)`F(gf)]{1200}1a

 \putmorphism(-150,450)(1,0)[F(A)`F(B)`F(f)]{600}1a
 \putmorphism(450,450)(1,0)[\phantom{F(B)}`F(C) `F(g)]{680}1a
 \putmorphism(1120,450)(1,0)[\phantom{F(B)}`G(C) `\alpha(C)]{600}1a

\putmorphism(-180,900)(0,-1)[\phantom{Y_2}``=]{450}1r
\putmorphism(1060,900)(0,-1)[\phantom{Y_2}``=]{450}1r
\put(350,650){\fbox{$F_{gf}$}}
\put(1000,200){\fbox{$\delta_{\alpha,g}$}}

  \putmorphism(-150,0)(1,0)[F(A)` F(B) `F(f)]{600}1a
\putmorphism(450,0)(1,0)[\phantom{F(A)}` G(B) `\alpha(B)]{680}1a
 \putmorphism(1120,0)(1,0)[\phantom{F(A)}`G(C) ` G(g)]{620}1a

\putmorphism(450,450)(0,-1)[\phantom{Y_2}``=]{450}1l
\putmorphism(1710,450)(0,-1)[\phantom{Y_2}``=]{450}1r

 \putmorphism(-150,-450)(1,0)[F(A)`G(A)`\alpha(A)]{600}1a
 \putmorphism(450,-450)(1,0)[\phantom{F(B)}`G(B) `G(f)]{680}1a
 \putmorphism(1120,-450)(1,0)[\phantom{F(B)}`G(C) `G(g)]{620}1a

\putmorphism(-180,0)(0,-1)[\phantom{Y_2}``=]{450}1r
\putmorphism(1040,0)(0,-1)[\phantom{Y_2}``=]{450}1r
\put(350,-240){\fbox{$\delta_{\alpha,f}$}}
\put(1000,-660){\fbox{$G^{-1}_{gf}$}}

 \putmorphism(450,-900)(1,0)[G(A)` G(C) `G(gf)]{1300}1a

\putmorphism(450,-450)(0,-1)[\phantom{Y_2}``=]{450}1l
\putmorphism(1750,-450)(0,-1)[\phantom{Y_2}``=]{450}1r
\efig
$$ 
and unit:
$$
\bfig
 \putmorphism(-150,420)(1,0)[F(A)`F(A)`=]{500}1a
\putmorphism(-180,420)(0,-1)[\phantom{Y_2}``=]{370}1l
\putmorphism(320,420)(0,-1)[\phantom{Y_2}``=]{370}1r
 \putmorphism(-150,50)(1,0)[F(A)`F(A)`F(id_A)]{500}1a
 \put(-80,250){\fbox{$(F_A)^{-1}$}} 
\putmorphism(330,50)(1,0)[\phantom{F(A)}`G(A) `\alpha(A)]{560}1a
 \putmorphism(-170,-350)(1,0)[F(A)`G(A)`\alpha(A)]{520}1a
 \putmorphism(350,-350)(1,0)[\phantom{F(A)}`G(A) `G(id_A)]{560}1a

\putmorphism(-180,50)(0,-1)[\phantom{Y_2}``=]{400}1r
\putmorphism(910,50)(0,-1)[\phantom{Y_2}``=]{400}1r
\put(240,-170){\fbox{$\delta_{\alpha,id_A}$}}
\put(540,-550){\fbox{$G_A$}}

\putmorphism(350,-350)(0,-1)[\phantom{Y_2}``=]{350}1l
\putmorphism(920,-350)(0,-1)[\phantom{Y_3}``=]{350}1r
 \putmorphism(350,-700)(1,0)[G(A)` G(A) `=]{580}1a
\efig
\quad=\quad
\bfig
\putmorphism(-150,50)(1,0)[F(A)` G(A) `\alpha(A)]{450}1a
\putmorphism(-150,-300)(1,0)[F(A)` G(A) `\alpha(A)]{440}1b
\putmorphism(-170,50)(0,-1)[\phantom{Y_2}``=]{350}1l
\putmorphism(280,50)(0,-1)[\phantom{Y_2}``=]{350}1r
\put(-80,-140){\fbox{$\Id_{\alpha(A)}$}}
\efig
$$
\end{enumerate}
\end{defn}

\begin{rem} \rmlabel{hor tr as subcase}
Recall horizontal transformations from \cite[Section 2.2]{GP:Adj} and their version when $R=S=\Id, 
\Aa=\Bb, \Cc=\Dd$, which constitute the 2-cells of the 2-category $\mathcal{L}x\mathcal{D}bl$ from \cite{GP}. Considering them 
as acting between pseudo double functors of strict double categories, one has that they are particular cases of our 
horizontal pseudonatural transformations so that the 2-cells $\delta_{\alpha,f}$ and $F_{gf}, F_A$ are identities. 
Similarly, vertical transformations from \cite{Shul} and \cite{GG} are particular cases of the vertical analogon of 
\deref{hor psnat tr}. 
\end{rem}

\begin{rem}
Our above definition generalizes also horizontal pseudotransformations from \cite[Section 2.2]{Gabi} to the case of {\em double 
pseudo functors} instead of strict double functors. In \cite[Section 2.2]{Gabi} horizontal pseudotransformations appear 
as 1h-cells of the double category $\llbracket\Aa,\Bb\rrbracket$ defined therein, which we mentioned in \ssref{mon str}. 
On the other hand, our definition of horizontal pseudotransformations differs from {\em strong horizontal transformations} from 
\cite[Section 7.4]{GP:Limits} in that therein the authors work with {\em pseudo} double categories (whereas we work here with strict ones), 
and they work with double functors which are lax in one direction and strict in the other, whereas we work with double functors 
which are pseudo in both directions. 
\end{rem}

Given a 1h-cell $f:A\to B$ and two horizontally composable horizontal pseudonatral transformations $\alpha_1$ and $\beta_1$, 
then setting in the axiom 1) of the above definition $u=v=id_A$ and $a=\delta_{\alpha_1,f}$ (and using axiom 2) ) one gets the identity: 
$$
\bfig

 \putmorphism(-650,0)(1,0)[F\s'F(A)` F\s'G(B)` F\s'\big(\alpha_1(B)F(f)\big)]{1160}1a
 \putmorphism(640,0)(1,0)[` G'G(B) `\beta_1(G(B))]{840}1a

\putmorphism(-730,0)(0,-1)[\phantom{Y_2}``=]{450}1r
\putmorphism(1600,0)(0,-1)[\phantom{Y_2}``=]{450}1r

 \putmorphism(-640,-450)(1,0)[F\s'F(A) ` G'F(A) ` ]{700}1a
 \putmorphism(-620,-420)(1,0)[` `\beta_1(F(A)) ]{700}0a

 \putmorphism(-20,-450)(1,0)[\phantom{F''G''G'(A)} ` `= ]{570}1a 
 \putmorphism(50,-450)(1,0)[\phantom{F''G''G'(A)} ` G'F(A) ` ]{640}0a
	
\putmorphism(840,-450)(1,0)[ `G'G(B) `]{780}1a
 \putmorphism(740,-430)(1,0)[ ``G'\big(\alpha_1(B)F(f)\big)]{780}0a
	
 \putmorphism(1620,-450)(1,0)[\phantom{A'\ot B'}` G'G(B) ` =]{720}1a

\put(40,-220){\fbox{$ \delta_{\beta_1,\alpha_1(B)F(f)}$  }}
\put(960,-690){\fbox{$G'(\delta_{\alpha_1,f})$}}

\putmorphism(50,-420)(0,-1)[\phantom{Y_2}``=]{450}1l
\putmorphism(750,-420)(0,-1)[``]{450}1l
\putmorphism(780,-420)(0,-1)[\phantom{Y_2}``G'(id)]{450}0l
\putmorphism(1600,-420)(0,-1)[\phantom{Y_2}``]{450}1r
\putmorphism(1570,-420)(0,-1)[\phantom{Y_2}``G'(id)]{450}0r
\putmorphism(2300,-420)(0,-1)[\phantom{Y_2}``=]{450}1r
\put(130,-690){\fbox{$ G'^\bullet$ }}
\put(1880,-690){\fbox{$  (G'^\bullet)^{-1} $}}

 \putmorphism(30,-880)(1,0)[G'F(A)` G'F(A) ` = ]{670}1b
 \putmorphism(690,-880)(1,0)[\phantom{A''\ot B'}`G'G(B) ` G'(G(f)\alpha_1(A)) ]{930}1b
 \putmorphism(1600,-880)(1,0)[\phantom{A''\ot B'}`G'G(B) ` = ]{740}1b
\efig
$$
\begin{equation} \eqlabel{Axiom 4}
\end{equation}
=
$$
\bfig

 \putmorphism(-650,0)(1,0)[F\s'F(A)` F\s'F(A)` =]{660}1a
 \putmorphism(150,0)(1,0)[`F\s'G(B) ` F\s'\big(\alpha_1(B)F(f)\big)]{1030}1a
 \putmorphism(1340,0)(1,0)[` F\s'G(B) `=]{620}1a

\putmorphism(-730,0)(0,-1)[\phantom{Y_2}``=]{450}1r
\putmorphism(50,0)(0,-1)[\phantom{Y_2}``F\s'(id)]{450}1l
\putmorphism(1150,0)(0,-1)[``]{450}1r
\putmorphism(1130,0)(0,-1)[\phantom{Y_2}``F\s'(id)]{450}0r
\putmorphism(2000,0)(0,-1)[\phantom{Y_2}``=]{450}1r

\put(-500,-250){\fbox{$ F\s'^\bullet$ }}
\put(1480,-250){\fbox{$  (F\s'^\bullet)^{-1} $}}

 \putmorphism(-650,-450)(1,0)[F\s'F(A) ` F\s'F(A) `= ]{700}1a

 \putmorphism(-50,-450)(1,0)[\phantom{F''G''G'(A)} ` `F\s'(G(f)\alpha_1(A)) ]{1080}1b
 \putmorphism(500,-450)(1,0)[\phantom{F''G''G'(A)} ` F\s'G(B) ` ]{680}0a
	
\putmorphism(1350,-450)(1,0)[ `F\s'G(B) `=]{630}1b
	
 \putmorphism(1980,-450)(1,0)[\phantom{A'\ot B'}` G'G(B) ` \beta_1(G(B))]{800}1b

\put(370,-220){\fbox{$  F\s'(\delta_{\alpha_1,f}) $}}
\put(1280,-690){\fbox{$ \delta_{\beta_1,G(f)\alpha_1(A)}$ }}

\putmorphism(50,-420)(0,-1)[\phantom{Y_2}``=]{450}1l
\putmorphism(2800,-440)(0,-1)[\phantom{Y_2}``=]{450}1r

 \putmorphism(30,-880)(1,0)[F\s'F(A)` G\s'F(A) ` \beta_1(F(A)) ]{1300}1b
 \putmorphism(1320,-880)(1,0)[\phantom{A''\ot B'}`G'G(B). ` G'(G(f)\alpha_1(A)) ]{1440}1b
\efig
$$

In particular, given three horizontally composable horizontal pseudonatural transformations $\alpha_1, \beta_1, \gamma_1$ 
(see \leref{horiz comp hor.ps.tr.} below for the definition of this composition), 
substituting: $f\mapsto \alpha_1(A), F\mapsto F\s', F\s'\mapsto F'', B\mapsto G(A), \alpha_1\mapsto\beta_1, \beta_1\mapsto\gamma_1, 
G\mapsto G', A\mapsto F(A), G'\mapsto G''$ one gets to:
$$
\bfig

 \putmorphism(-650,0)(1,0)[F''F\s'F(A)` F''G''G'(A)` F''\big((\beta_1\comp\alpha_1)(A)\big)]{1160}1a
 \putmorphism(740,0)(1,0)[` G''G''G'(A) `\gamma_1(G'G(A))]{760}1a

\putmorphism(-730,0)(0,-1)[\phantom{Y_2}``=]{450}1r
\putmorphism(1600,0)(0,-1)[\phantom{Y_2}``=]{450}1r

 \putmorphism(-640,-450)(1,0)[F''F\s'F(A) ` G''F\s'F(A) ` ]{700}1a
 \putmorphism(-620,-420)(1,0)[` `\gamma_1(F\s'F(A)) ]{700}0a

 \putmorphism(20,-450)(1,0)[\phantom{F''G''G'(A)} ` `= ]{470}1a 
 \putmorphism(50,-450)(1,0)[\phantom{F''G''G'(A)} ` G''F\s'F(A) ` ]{640}0a
	
\putmorphism(880,-450)(1,0)[ `G''G'G(A) `]{740}1a
 \putmorphism(710,-430)(1,0)[ ``G''\big((\beta_1\comp\alpha_1)(A)\big)]{740}0a
	
 \putmorphism(1670,-450)(1,0)[\phantom{A'\ot B'}` G''G'G(A) ` =]{660}1a

\put(40,-220){\fbox{$ \delta_{\gamma_1,(\beta_1\comp\alpha_1)(A)}$  }}
\put(880,-690){\fbox{$G''(\delta_{\beta_1,\alpha_1(A)})$}}

\putmorphism(50,-420)(0,-1)[\phantom{Y_2}``=]{450}1l
\putmorphism(750,-420)(0,-1)[``]{450}1l
\putmorphism(780,-420)(0,-1)[\phantom{Y_2}``G''(id)]{450}0l
\putmorphism(1600,-420)(0,-1)[\phantom{Y_2}``]{450}1r
\putmorphism(1570,-420)(0,-1)[\phantom{Y_2}``G''(id)]{450}0r
\putmorphism(2300,-420)(0,-1)[\phantom{Y_2}``=]{450}1r
\put(130,-690){\fbox{$ G''^\bullet$ }}
\put(1880,-690){\fbox{$  (G''^\bullet)^{-1}$ }}

 \putmorphism(30,-880)(1,0)[G''F\s'F(A)` G''F\s'G(A) ` = ]{670}1b
 \putmorphism(730,-880)(1,0)[\phantom{A''\ot B'}`G''G'G(A) ` G''(\crta{(\beta_1\comp\alpha_1)(A)}) ]{890}1b
 \putmorphism(1680,-880)(1,0)[\phantom{A''\ot B'}`G''G'G(A) ` = ]{660}1b
\efig
$$
\begin{equation} \eqlabel{3d's}
\end{equation}
=
$$
\bfig

 \putmorphism(-650,0)(1,0)[F''F\s'F(A)` F''F\s'F(A)` =]{660}1a
 \putmorphism(200,0)(1,0)[`F''G'G(A) ` F''\big((\beta_1\comp\alpha_1)(A)\big)]{980}1a
 \putmorphism(1400,0)(1,0)[` F''G'G(A) `=]{560}1a

\putmorphism(-730,0)(0,-1)[\phantom{Y_2}``=]{450}1r
\putmorphism(50,0)(0,-1)[\phantom{Y_2}``F''(id)]{450}1l
\putmorphism(1150,0)(0,-1)[``]{450}1r
\putmorphism(1130,0)(0,-1)[\phantom{Y_2}``F''(id)]{450}0r
\putmorphism(2000,0)(0,-1)[\phantom{Y_2}``=]{450}1r

\put(-500,-250){\fbox{$ F''^\bullet$ }}
\put(1480,-250){\fbox{$  (F''^\bullet)^{-1} $}}

 \putmorphism(-650,-450)(1,0)[F''F\s'F(A) ` F''F\s'F(A) `= ]{700}1a

 \putmorphism(20,-450)(1,0)[\phantom{F''G''G'(A)} ` `F''(\crta{(\beta_1\comp\alpha_1)(A)}) ]{920}1b
 \putmorphism(500,-450)(1,0)[\phantom{F''G''G'(A)} ` F''G\s'G(A) ` ]{680}0a
	
\putmorphism(1380,-450)(1,0)[ `F''G'G(A) `=]{590}1b
	
 \putmorphism(2020,-450)(1,0)[\phantom{A'\ot B'}` G''G'G(A) ` \gamma_1(G'G(A))]{760}1b

\put(340,-220){\fbox{$  F''(\delta_{\beta_1,\alpha_1(A)})$ }}
\put(1280,-690){\fbox{$ \delta_{\gamma_1,(\beta_1\comp\alpha_1)(A)}$ }}

\putmorphism(50,-420)(0,-1)[\phantom{Y_2}``=]{450}1l
\putmorphism(2800,-440)(0,-1)[\phantom{Y_2}``=]{450}1r

 \putmorphism(30,-880)(1,0)[F''F\s'F(A)` G''F\s'F(A) ` \gamma_1(F\s'F(A)) ]{1300}1b
 \putmorphism(1380,-880)(1,0)[\phantom{A''\ot B'}`G''G'G(A) ` G''(\crta{(\beta_1\comp\alpha_1)(A)}) ]{1380}1b
\efig
$$
where $\crta{(\beta_1\comp\alpha_1)(A)}=\crta{\beta_1(G(A))\comp F\s'(\alpha_1(A))} = G'(\alpha_1(A))\comp\beta_1(F(A));$
observe that the top 1h-cell in the above identity is $\gamma_1\comp(\beta_1\comp\alpha_1)$.

\begin{lma} \lelabel{delta F(alfa)}
For a double pseudofunctor $H:\Bb\to\Cc$ and a horizontal pseudonatural transformation $\alpha: F\to G$ of double pseudofunctors 
$F,G: \Aa\to\Bb$, $H(\alpha)$ is a horizontal pseudonatural transformation with $(H(\alpha))_u=H(\alpha_u)$ and $\delta_{H(\alpha),f}$ 
satisfying:
$$
\bfig
\putmorphism(-150,500)(1,0)[HF(A)`HG(B)`H(\alpha(B)F(f))]{1240}1a

 \putmorphism(-150,50)(1,0)[HF(A)`HF(B)`HF(f)]{600}1a
 \putmorphism(450,50)(1,0)[\phantom{F(A)}`HG(B) `H(\alpha(B))]{640}1a

\putmorphism(-180,500)(0,-1)[\phantom{Y_2}``=]{450}1l
\putmorphism(1100,500)(0,-1)[\phantom{Y_2}``=]{450}1r
\put(320,270){\fbox{$H_{\alpha(B),F(f)}$}}

\putmorphism(-150,-400)(1,0)[HF(A)`HG(A) `H(\alpha(A))]{640}1a
 \putmorphism(450,-400)(1,0)[\phantom{A'\ot B'}` HG(B) `HG(f)]{680}1a

\putmorphism(-180,50)(0,-1)[\phantom{Y_2}``=]{450}1l
\putmorphism(1120,50)(0,-1)[\phantom{Y_3}``=]{450}1r
\put(320,-200){\fbox{$\delta_{H(\alpha),f}$}}

\efig
\quad=\quad
\bfig
\putmorphism(-580,500)(0,-1)[\phantom{Y_2}``=]{450}1l
\putmorphism(1520,500)(0,-1)[\phantom{Y_2}``=]{450}1r
\putmorphism(-550,500)(1,0)[HF(A)`\phantom{HF(A)}`=]{550}1a
\putmorphism(-550,50)(1,0)[HF(A)`\phantom{HF(A)}`=]{550}1a
\put(-500,260){\fbox{$H^{F(A)}$}}
\put(1060,260){\fbox{$(H^{G(B)})^{-1}$}}

 \putmorphism(980,500)(1,0)[\phantom{HF(A)}`HG(B) `=]{550}1a
 \putmorphism(1000,50)(1,0)[\phantom{HF(A)}`HG(B) `=]{550}1a

\putmorphism(-20,500)(1,0)[HF(A)`HG(B)`H(\alpha(B)F(f))]{1000}1a
 \putmorphism(-20,50)(1,0)[HF(A)`HG(B)`H\big(G(f)\alpha(A)\big)]{1000}1a

\putmorphism(-80,500)(0,-1)[\phantom{Y_2}``]{450}1r
\putmorphism(-100,500)(0,-1)[\phantom{Y_2}``H(id)]{450}0r

\putmorphism(1020,500)(0,-1)[\phantom{Y_2}``]{450}1l
\putmorphism(1050,500)(0,-1)[\phantom{Y_2}``H(id)]{450}0l

\put(300,310){\fbox{$H(\delta_{\alpha,f})$}}
\put(300,-180){\fbox{$H_{G(f),\alpha(A)}$}}

\putmorphism(-150,-400)(1,0)[HF(A)`HG(A) `H(\alpha(A))]{600}1a
 \putmorphism(450,-400)(1,0)[\phantom{HF(A)}` HG(B). `HG(f)]{600}1a

\putmorphism(-80,50)(0,-1)[\phantom{Y_2}``=]{450}1l
\putmorphism(1020,50)(0,-1)[\phantom{Y_3}``=]{450}1r

\efig
$$
\end{lma}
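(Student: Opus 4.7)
The plan is to define the three pieces of data for $H(\alpha)$ as specified---namely $H(\alpha)(A):=H(\alpha(A))$, $(H(\alpha))_u:=H(\alpha_u)$, and $\delta_{H(\alpha),f}$ by the pasting in the statement---and then check the three axioms of \deref{hor psnat tr} one by one. For each axiom, the strategy is the same: apply $H$ to the corresponding identity for $\alpha$, and then absorb the resulting $H$-image of a composite 2-cell into the target form by inserting and cancelling the pseudofunctoriality constraints of $H$ (namely $H^{\bullet}$ for vertical composition of 1v-cells, $H_{\bullet,\bullet}$ for horizontal composition of 1h-cells, and the unit constraints $H^A, H_A$), together with their naturality squares against $H$ applied to the structural 2-cells of $\alpha$.

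More concretely, for axiom (1), the pseudonaturality of $\alpha_u$ against a 2-cell $a$ of $\Aa$ becomes an identity in $\Cc$ by applying $H$; after applying $H$ both composites acquire a middle layer $H_{\bullet,\bullet}$ or $(H_{\bullet,\bullet})^{-1}$ that must be moved across by coherence. Substituting the defining pasting of $\delta_{H(\alpha),f}$ then makes the two sides agree on the nose. For axiom (2) (vertical functoriality), I would use the invertible constraint $H^{vu}$ of $H$ to mediate between $H(F(v)F(u))$ and $H(F(v))H(F(u))$ (and similarly for $G$), so that $H$ applied to axiom (2) of $\alpha$ yields the desired equality once the $H^{\bullet}$'s are distributed across $H(\alpha_u)$ and $H(\alpha_v)$; the unit case follows from applying $H$ to the unit equation for $\alpha$ together with $H^A$.

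Axiom (3), the horizontal functoriality of $\delta$, is the core computation and where I expect the main bookkeeping obstacle. Here one must show that $\delta_{H(\alpha),gf}$ defined via the given pasting equals the two-step composition built out of $\delta_{H(\alpha),f}$ and $\delta_{H(\alpha),g}$ together with the compositor constraints $(HF)_{g,f}=H(F_{g,f})\odot H_{F(g),F(f)}$ and $(HG)_{g,f}$ of the composite pseudofunctors $HF$ and $HG$. The strategy is to start from axiom (3) for $\alpha$, apply $H$ (producing one $H(\delta_{\alpha,f})$, one $H(\delta_{\alpha,g})$, plus $H$ of the $F$- and $G$-compositors), and then reorganize by inserting $H_{\bullet,\bullet}$ together with $H_{\bullet,\bullet}^{-1}$ pairs so as to identify each $H(\delta_{\alpha,f})$ (respectively $H(\delta_{\alpha,g})$) embedded in its unit-conjugated form with the pasting that defines $\delta_{H(\alpha),f}$ (respectively $\delta_{H(\alpha),g}$).

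The principal technical obstacle is purely diagrammatic: keeping track of which identity 2-cell in the domain needs to be expanded as $H^{\bullet}$ composed with its inverse so that the resulting rectangles align with the blocks appearing in the definition of $\delta_{H(\alpha),-}$. This is analogous to the bookkeeping already performed in equation \equref{Axiom 4} for compositions of three horizontal pseudonatural transformations, and no essentially new idea is required---only careful application of the coherence of $H$ as a double pseudofunctor (which can be invoked via \cite{Shul1}) and of the axioms for $\alpha$. Once the three axioms are verified, the statement follows.
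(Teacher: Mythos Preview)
The paper does not supply a proof of this lemma: it is stated and immediately used (to rewrite \equref{Axiom 4} as \equref{Cor}), with the verification left to the reader. Your proposal is exactly the routine verification the paper implicitly has in mind: take the displayed equation as the definition of $\delta_{H(\alpha),f}$ (which is legitimate since $H_{\alpha(B),F(f)}$ is invertible), set $(H(\alpha))_u=H(\alpha_u)$, and check the three axioms of \deref{hor psnat tr} by applying $H$ to the corresponding axiom for $\alpha$ and then redistributing the compositor and unit constraints of $H$ via the coherence axioms of \cite[Definition~6.1]{Shul1}. There is no alternative argument to compare against, and your outline is correct; the only caveat is that in axiom~(3) you must also remember to identify the compositors of the composite double pseudofunctors $HF$ and $HG$ with the appropriate pastings of $H_{-,-}$ and $H(F_{-,-})$ (respectively $H(G_{-,-})$), which you already note.
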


In view of the above Lemma, the identity \equref{Axiom 4} is equivalent to:
$$
\bfig
 \putmorphism(450,450)(1,0)[F\s'F(B)` F\s'G(B) `F\s'(\alpha_1(B))]{1100}1a
 \putmorphism(1590,450)(1,0)[\phantom{F(B)}`G'G(B) `\beta_1(G(B))]{960}1a

\put(1300,220){\fbox{$\delta_{\beta_1,\alpha_1(B)}$}}

  \putmorphism(-550,0)(1,0)[F\s'F(A) ` F\s'F(B) `F\s'F(f)]{1000}1a
\putmorphism(480,0)(1,0)[\phantom{F(A)}` G\s'F(B) `\beta_1(F(B))]{1070}1a
 \putmorphism(1600,0)(1,0)[\phantom{F(A)}` G'G(B) ` G'(\alpha_1(B))]{960}1a

\putmorphism(450,450)(0,-1)[\phantom{Y_2}``=]{450}1l
\putmorphism(2510,450)(0,-1)[\phantom{Y_2}``=]{450}1r

 \putmorphism(-550,-450)(1,0)[F\s'F(A) ` G'F(A)` \beta_1(F(A))]{1000}1a
 \putmorphism(500,-450)(1,0)[\phantom{F(B)}` G'F(B) ` G'F(f)]{1050}1a

\putmorphism(-550,0)(0,-1)[\phantom{Y_2}``=]{450}1r
\putmorphism(1540,0)(0,-1)[\phantom{Y_2}``=]{450}1r
\putmorphism(2510,0)(0,-1)[\phantom{Y_2}``=]{900}1r
\put(250,-240){\fbox{$\delta_{\beta_1,F(f)}$}}
\put(1780,-620){\fbox{$\delta_{G'(\alpha_1),f}$}}

 \putmorphism(550,-900)(1,0)[G'F(A)` G'G(A) ` G'(\alpha_1(A))]{1000}1a
 \putmorphism(1500,-900)(1,0)[\phantom{G''G'F(A)}` G'G(B) ` G'G(f)]{1060}1a

\putmorphism(450,-450)(0,-1)[\phantom{Y_2}``=]{450}1l
\efig
$$
\begin{equation}\eqlabel{Cor}
\end{equation}
=
$$
\bfig
   \putmorphism(-550,450)(1,0)[F\s'F(A) ` F\s'F(B) `F\s'F(f)]{1000}1a
\putmorphism(590,450)(1,0)[` F\s'G(B) `F\s'(\alpha_1(B))]{960}1a
\putmorphism(-550,450)(0,-1)[\phantom{Y_2}``=]{450}1r
\putmorphism(1540,450)(0,-1)[\phantom{Y_2}``=]{450}1l

\put(250,220){\fbox{$\delta_{F\s'(\alpha_1), f}$}}

  \putmorphism(-550,0)(1,0)[F\s'F(A) ` F\s'G(A) `F\s'(\alpha_1(A))]{1000}1a
\putmorphism(490,0)(1,0)[\phantom{F(A)}` F\s'G(B) `F\s'G(f)]{1060}1a 
 \putmorphism(1590,0)(1,0)[\phantom{F(A)}` G'G(B) ` \beta_1(G(B))]{960}1a

 \putmorphism(450,-450)(1,0)[F\s'G(A) ` G'G(A)` \beta_1(G(A))]{1000}1a 
 \putmorphism(1520,-450)(1,0)[\phantom{F(B)}` G'G(B) ` G'G(f)]{1040}1a

\putmorphism(-550,0)(0,-1)[\phantom{Y_2}``=]{900}1r
\putmorphism(450,0)(0,-1)[\phantom{Y_2}``=]{450}1r
\putmorphism(2510,0)(0,-1)[\phantom{Y_2}``=]{450}1r

\put(1340,-240){\fbox{$\delta_{\beta_1,G(f)}$}}
\put(-300,-620){\fbox{$\delta_{\beta_1, \alpha_1(A)}$}}

 \putmorphism(-550,-900)(1,0)[F\s'F(A) `\phantom{G''G'F(A)} ` \beta_1(F(A))]{1060}1a
 \putmorphism(450,-900)(1,0)[G\s'F(A)` G'G(A). ` G'(\alpha_1(A))]{1000}1a

\putmorphism(1550,-450)(0,-1)[\phantom{Y_2}``=]{450}1l
\efig
$$

We outlined the above identities \equref{3d's} and \equref{Cor} for the following reason. The horizontal composition of the 
2-cell components $t^\alpha_f$ of {\em double pseudonatural transformations} from \deref{double 2-cells}
is defined in \prref{horiz comp 2-cells}. In the proof that this 
horizontal composition of $t$'s  satisfies axiom (T3-1) of \deref{double 2-cells} the vertical version of the identity \equref{Cor} 
is used. In order for it to be associative the vertical version of the identity \equref{3d's} is used.

\begin{lma} \lelabel{horiz comp hor.ps.tr.}
Horizontal composition of two horizontal pseudonatural transformations $\alpha_1: F\Rightarrow G: \Aa\to\Bb$ and $\beta_1: F\s'\Rightarrow G':\Bb\to\Cc$, denoted by $\beta_1\comp\alpha_1$, is well-given by: 
\begin{itemize}
\item for every 0-cell $A$ in $\Aa$ a 1h-cell in $\Cc$:
$$(\beta_1\comp\alpha_1)(A)=\big( F\s'F(A)\stackrel{F\s'(\alpha_1(A))}{\longrightarrow}F\s'G(A) \stackrel{\beta_1(G(A))}{\longrightarrow} G'G(A) \big),$$ 
\item for every 1v-cell $u:A\to A'$ in $\Aa$ a 2-cell in $\Cc$:
$$(\beta_1\comp\alpha_1)_u=
\bfig
\putmorphism(-320,500)(1,0)[F\s'F(A)`F\s'G(A)`F\s'(\alpha_1(A))]{770}1a
 \putmorphism(500,500)(1,0)[\phantom{F(A)}`G'G(A) `\beta_1(G(A))]{720}1a

 \putmorphism(-320,50)(1,0)[F\s'F(A')`F\s'G(A')`F\s'(\alpha_1(A'))]{770}1a
 \putmorphism(520,50)(1,0)[\phantom{F(A)}`G'G(A') `\beta_1(G(A'))]{730}1a

\putmorphism(-280,500)(0,-1)[\phantom{Y_2}``F\s'F(u)]{450}1l
\putmorphism(450,500)(0,-1)[\phantom{Y_2}``]{450}1r
\putmorphism(300,500)(0,-1)[\phantom{Y_2}``F\s'G(u)]{450}0r
\putmorphism(1180,500)(0,-1)[\phantom{Y_2}``G'G(u)]{450}1r
\put(-160,290){\fbox{$F\s'((\alpha_1)_u)$}}
\put(700,300){\fbox{$(\beta_1)_{G(u)}$}}
\efig
$$
\item for every 1h-cell $f:A\to B$  in $\Aa$ a 2-cell in $\Cc$:
$$\delta_{\beta_1\comp\alpha_1,f}=
\bfig

 \putmorphism(-250,0)(1,0)[F\s'F(A)`F\s'F(B)` F\s'F(f)]{700}1a
 \putmorphism(450,0)(1,0)[\phantom{F\s'F(B)}`F\s'G(B) `F\s'(\alpha_1(B))]{760}1a

 \putmorphism(-250,-450)(1,0)[F\s'F(A)`F\s'G(A)`F\s'(\alpha_1(A))]{700}1a
 \putmorphism(480,-450)(1,0)[\phantom{A\ot B}`F\s'G(B) `F\s'G(f)]{740}1a
 \putmorphism(1200,-450)(1,0)[\phantom{A'\ot B'}` G'G(B) `\beta_1(G(B))]{760}1a

\putmorphism(-280,0)(0,-1)[\phantom{Y_2}``=]{450}1r
\putmorphism(1180,0)(0,-1)[\phantom{Y_2}``=]{450}1r
\put(350,-240){\fbox{$ \delta_{F\s'(\alpha_1),f}  $}}
\put(1000,-700){\fbox{$\delta_{\beta_1,G(f)}$}}

 \putmorphism(450,-900)(1,0)[F\s'G(A)` G'G(A) `\beta_1(G(A))]{760}1a
 \putmorphism(1180,-900)(1,0)[\phantom{A''\ot B'}`G'G(B) ` G'G(f)]{760}1a

\putmorphism(450,-450)(0,-1)[\phantom{Y_2}``=]{450}1l
\putmorphism(1950,-450)(0,-1)[\phantom{Y_2}``=]{450}1r
\efig
$$
where $\delta_{F\s'(\alpha_1),f}$ is from \leref{delta F(alfa)}. 
\end{itemize}
\end{lma}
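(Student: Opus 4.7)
The plan is to verify that the proposed assignments satisfy the three axioms (with their unit clauses) of \deref{hor psnat tr} for the composite $\beta_1\comp\alpha_1$. First note that the data is well-typed: the 2-cell $\delta_{F\s'(\alpha_1),f}$ appearing in the definition of $\delta_{\beta_1\comp\alpha_1,f}$ is supplied by \leref{delta F(alfa)} applied to the double pseudofunctor $F\s'$ and the horizontal pseudonatural transformation $\alpha_1$, so every paste in the stated definition composes in $\Cc$. The general strategy is columnwise: in each defining paste, the left column is an $F\s'$-image of $\alpha_1$-data and the right column is a $\beta_1$-component evaluated at a $G$-image of an $\Aa$-cell. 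Consequently, each axiom for $\beta_1\comp\alpha_1$ decomposes into the corresponding axiom for $\alpha_1$ (whiskered by $F\s'$ and corrected by $F\s'$-coherences) stacked with the corresponding axiom for $\beta_1$ evaluated at the appropriate $G$-images.

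For axiom 1 (pseudonaturality at a 2-cell $a$ of $\Aa$), I would rewrite the $F\s'$-column by pseudonaturality of $\alpha_1$ at $a$, and then the right column by pseudonaturality of $\beta_1$ at $G(a)$; the two sides of the identity match at the level of the intermediate 1v-cell $F\s'G(u)$. For axiom 2 (vertical functoriality at composable 1v-cells $u,v$) the analogous split by vertical functoriality of $\alpha_1$ in the $F\s'$-column and vertical functoriality of $\beta_1$ at $G(u), G(v)$ in the right column does the job, with the composite-functor coherences $(F\s'F)^{vu}, (G'G)^{vu}$ assembled in the standard way from $F\s'^{vu}, F^{vu}, G'^{vu}, G^{vu}$. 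The unit clauses of axioms 1 and 2 are analogous, using the unit parts of each axiom for $\alpha_1, \beta_1$ combined with the unit coherences $F\s'^{F(A)}, G'^{G(A)}$ and the $F\s'$-image of $F^A$.

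The main obstacle is axiom 3 (horizontal functoriality of $\delta_{\beta_1\comp\alpha_1,-}$). Given composable 1h-cells $f, g$ in $\Aa$, I would first break $\delta_{F\s'(\alpha_1), gf}$ into $f$- and $g$-pieces using axiom 3 for $\alpha_1$ via \leref{delta F(alfa)}, which produces the $F\s'$-coherences $F\s'_{gf}, F\s'^{G(B)}$ on top; and independently break $\delta_{\beta_1, G(gf)}$ into the corresponding $G(f)$- and $G(g)$-pieces using axiom 3 for $\beta_1$, which introduces the coherence $G_{gf}$ of $G$. This produces a large vertical paste of four $\delta$-blocks in the order $\delta_{F\s'\alpha_1,f}, \delta_{F\s'\alpha_1,g}, \delta_{\beta_1,G(f)}, \delta_{\beta_1,G(g)}$, separated by coherence insertions. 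The target paste, namely the vertical composite of $\delta_{\beta_1\comp\alpha_1, f}$ with $\delta_{\beta_1\comp\alpha_1,g}$, contains the same four blocks in the interleaved order $\delta_{F\s'\alpha_1,f}, \delta_{\beta_1,G(f)}, \delta_{F\s'\alpha_1,g}, \delta_{\beta_1,G(g)}$. The reorganization therefore reduces to commuting a $\delta_{F\s'\alpha_1,g}$-block past a $\delta_{\beta_1,G(f)}$-block; since these two 2-cells occupy independent positions of the underlying grid in $\Cc$ (the former is entirely confined to $F\s'$-column cells, the latter to the $\beta_1$-column of $G$-image cells), this is the interchange law in $\Cc$. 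The unit clause for $\delta_{\beta_1\comp\alpha_1, id_A}$ collapses by combining the unit clauses for $\delta_{\alpha_1,-}$ and $\delta_{\beta_1,-}$ with the unit coherences of $F\s'$ and $G'$. The principal difficulty throughout is bookkeeping: organizing the large two-dimensional pastes so that \leref{delta F(alfa)}, the axioms of $\alpha_1, \beta_1$, and the interchange law apply in visible positions.
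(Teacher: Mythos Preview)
The paper states this lemma without proof, treating the verification as routine, so there is nothing to compare against directly. Your outline is correct and is the expected argument: the columnwise split of each paste into an $F\s'$-whiskered $\alpha_1$-piece and a $\beta_1$-piece at $G$-images is exactly how one reduces each axiom for $\beta_1\comp\alpha_1$ to the corresponding axiom for $\alpha_1$ (under $F\s'$, via \leref{delta F(alfa)}) together with the same axiom for $\beta_1$ instantiated at $G$-cells. Your identification of the only nontrivial step---the block swap in axiom~3, handled by interchange in $\Cc$---is accurate.

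One small sharpening: for axiom~3 it is cleaner to decompose $\delta_{F\s'(\alpha_1),gf}$ by invoking axiom~3 for the horizontal pseudonatural transformation $F\s'(\alpha_1)$ itself (which \leref{delta F(alfa)} guarantees), rather than going back through $\alpha_1$ and reassembling the $F\s'$-coherences by hand. This absorbs the bookkeeping with $F\s'_{gf}$ and $F\s'^{G(B)}$ into a single already-established identity and keeps the paste smaller.
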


{\em Vertical pseudonatural transformations} between double pseudo functors $F,G: \Aa\to\Bb$ are defined in an analogous way, 
consisting of a 1v-cell $\alpha(A):F(A)\to G(A)$ in $\Bb$ for every 0-cell $A$ in $\Aa$, 
for every 1h-cell $f:A\to B$ in $\Aa$ a 2-cell on the left hand-side below and for every 1v-cell $u:A\to A'$  in $\Aa$ a 2-cell 
on the right hand-side below, both in $\Bb$:
$$
\bfig
\putmorphism(-150,180)(1,0)[F(A)`F(B)`F(f)]{560}1a
\putmorphism(-150,-190)(1,0)[G(A)`G(B)`G(f)]{600}1a
\putmorphism(-180,180)(0,-1)[\phantom{Y_2}``\alpha(A)]{370}1l
\putmorphism(410,180)(0,-1)[\phantom{Y_2}``\alpha(B)]{370}1r
\put(30,40){\fbox{$\alpha_f$}}
\efig
\qquad\qquad
\bfig
 \putmorphism(-90,500)(1,0)[F(A)`F(A) `=]{540}1a
\putmorphism(-120,500)(0,-1)[\phantom{Y_2}`F(A') `F(u)]{400}1l
\putmorphism(-90,-300)(1,0)[G(A')`G(A') `=]{540}1a
\putmorphism(-120,100)(0,-1)[\phantom{Y_2}``\alpha(A')]{400}1l
\putmorphism(450,100)(0,-1)[\phantom{Y_2}``G(u)]{400}1r
\putmorphism(450,500)(0,-1)[\phantom{Y_2}`G(A) `\alpha(A)]{400}1r
\put(60,50){\fbox{$\delta_{\alpha,u}$}}
\efig
$$
Observe that we use the same notation for the 2-cells $\alpha_{\bullet}$ and $\delta_{\alpha,\bullet}$ both for a horizontal and a vertical 
pseudonatural transformation $\alpha$, the difference is indicated by the notation for the respective 1-cell, recall that horizontal ones are denoted by $f,g..$ and vertical ones by $u,v...$. 

For vertical pseudonatural transformations results analogous to \leref{delta F(alfa)} and \equref{Cor} hold, as well as to \leref{horiz comp hor.ps.tr.} 
which we state here in order to fix the structures that we use:

\begin{lma} \lelabel{horiz comp vert.ps.tr.}
Horizontal composition of two vertical pseudonatural transformations $\alpha_0: F\Rightarrow G: \Aa\to\Bb$ and 
$\beta_0: F\s'\Rightarrow G':\Bb\to\Cc$, denoted by $\beta_0\comp\alpha_0$, is well-given by: 
\begin{itemize}
\item for every 0-cell $A$ in $\Aa$ a 1v-cell on the left below, and 
for every 1h-cell $f:A\to B$ in $\Aa$ a 2-cell on the right below, both in $\Cc$:
$$(\beta_0\comp\alpha_0)(A)=
\bfig
\putmorphism(-280,500)(0,-1)[F\s'F(A)`F\s'G(A) `F\s'(\alpha_0(A))]{450}1l
 \putmorphism(-280,70)(0,-1)[\phantom{F(A)}`G'G(A) `\beta_0(G(A))]{450}1l
\efig \qquad
(\beta_0\comp\alpha_0)_f= 
\bfig
\putmorphism(-250,500)(1,0)[F\s'F(A)`F\s'F(B)` F\s'F(f)]{700}1a
 \putmorphism(-250,50)(1,0)[F\s'G(A)`F\s'G(B)` F\s'G(f)]{700}1a
 \putmorphism(-250,-400)(1,0)[G'G(A)`G'G(B)` G'G(f)]{700}1a

\putmorphism(-280,500)(0,-1)[\phantom{Y_2}``F\s'(\alpha_0(A))]{450}1l
 \putmorphism(-280,70)(0,-1)[\phantom{F(A)}` `\beta_0(G(A))]{450}1l

\putmorphism(450,500)(0,-1)[\phantom{Y_2}``F\s'(\alpha_0(B))]{450}1r
\putmorphism(450,70)(0,-1)[\phantom{Y_2}``\beta_0(G(B))]{450}1r
\put(-160,290){\fbox{$F\s'((\alpha_0)_f)$}}
\put(-120,-150){\fbox{$(\beta_0)_{G(f)}$}}
\efig
$$
\item for every 1v-cell $u:A\to A'$ in $\Aa$ a 2-cell in $\Cc$: 
$$\delta_{\beta_0\comp\alpha_0,u}=
\bfig
 \putmorphism(-150,500)(1,0)[F\s'F(A)`F\s'F(A) `=]{550}1a
\putmorphism(-180,500)(0,-1)[\phantom{Y_2}`F\s'F(A') `F\s'F(u)]{450}1l
\put(-80,-160){\fbox{$\delta_{F\s'(\alpha_0),u}$}}
\putmorphism(-150,-400)(1,0)[F\s'G(A')`F\s'G(A') `=]{550}1a
\putmorphism(-180,50)(0,-1)[\phantom{Y_2}``F\s'(\alpha_0(A'))]{450}1l
\putmorphism(380,500)(0,-1)[\phantom{Y_2}` `F\s'(\alpha_0(A))]{450}1r
\putmorphism(380,50)(0,-1)[F\s'G(A)` `F\s'G(u)]{450}1r
\putmorphism(520,60)(1,0)[`F\s'G(A)`=]{460}1a
\putmorphism(370,-850)(1,0)[\phantom{G'G(A)}``=]{440}1b
\putmorphism(960,50)(0,-1)[\phantom{(B, \tilde A')}``\beta_0(G(A))]{450}1r
\putmorphism(960,-400)(0,-1)[G'G(A)`G'G(A')` G'G(u)]{450}1r
\putmorphism(400,-400)(0,-1)[\phantom{(B, \tilde A)}`G'G( A') `\beta_0(G(A'))]{450}1l
\put(500,-630){\fbox{$\delta_{\beta_0,G(u)}$}}
\efig
$$
where $\delta_{F\s'(\alpha_0),u}$ is defined analogously as in \leref{delta F(alfa)}. 
\end{itemize}
\end{lma}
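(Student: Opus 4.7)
The plan is to verify that the data specified in the statement (the 1v-cell components $(\beta_0\comp\alpha_0)(A)$, the 2-cells $(\beta_0\comp\alpha_0)_f$ for each 1h-cell $f$, and the 2-cells $\delta_{\beta_0\comp\alpha_0,u}$ for each 1v-cell $u$) define a vertical pseudonatural transformation, that is, satisfy the three axioms analogous to those in \deref{hor psnat tr} (pseudonaturality of 2-cells, horizontal functoriality of $(-)_f$, and vertical functoriality of $\delta_{-,u}$), just with the roles of horizontal and vertical cells interchanged. The outline parallels \leref{horiz comp hor.ps.tr.}, so the strategy is to mimic that proof using the vertical versions of \leref{delta F(alfa)} and \equref{Cor} whose existence is asserted in the paragraph preceding the statement.

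First I would address the pseudonaturality axiom: for a 2-cell $a$ in $\Aa$ with horizontal boundary $f,g$ and vertical boundary $u,v$, one needs to show that pasting $F'F(a)$ with $(\beta_0\comp\alpha_0)_g$ coincides with pasting $(\beta_0\comp\alpha_0)_f$ with $G'G(a)$, mediated by $\delta_{\beta_0\comp\alpha_0,u}$ and $\delta_{\beta_0\comp\alpha_0,v}$. This follows by first applying pseudonaturality of $\alpha_0$ to produce the image of $a$ under $F'$, using that $F'$ is a double pseudofunctor so $F'$-images of pasted 2-cells paste to the $F'$-image, and then applying pseudonaturality of $\beta_0$ at the image $G(a)$. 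The horizontal functoriality axiom for $(-)_f$ under composition of 1h-cells $f$ and $f'$ is similar: one expands $(\beta_0\comp\alpha_0)_{f'f}$ by plugging in the definition, applies the $F'^{f'f}$ and $G'^{f'f}$ isomorphisms together with the horizontal functoriality of $\alpha_0$ and $\beta_0$, and reassembles into $(\beta_0\comp\alpha_0)_{f'}\odot(\beta_0\comp\alpha_0)_f$; the unit axiom for identity 1h-cells follows an analogous but shorter calculation using $F'^A$ and $G'^A$.

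The main obstacle, as already advertised in the text, is the vertical functoriality axiom for the family $\delta_{\beta_0\comp\alpha_0,-}$. For two composable 1v-cells $u,v$ in $\Aa$ one must show that $\delta_{\beta_0\comp\alpha_0,vu}$ coincides with the vertical pasting of $\delta_{\beta_0\comp\alpha_0,v}$ onto $\delta_{\beta_0\comp\alpha_0,u}$, modified by the constraints $F^{vu},G^{vu}$ and their primed analogues that appear on the inner 1v-cells $F'F(vu)$ and $G'G(vu)$. The calculation is structurally the vertical transpose of the derivation leading to \equref{3d's}: one inserts the $F'$-image of vertical functoriality for $\alpha_0$ and the vertical functoriality for $\beta_0$ at $G(u),G(v)$, and then re-threads the pseudofunctor constraints $F'^\bullet,G'^\bullet$ through the middle using the vertical version of \leref{delta F(alfa)}. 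The corresponding unit clause $\delta_{\beta_0\comp\alpha_0,1^A}=\Id^{(\beta_0\comp\alpha_0)(A)}$ drops out from the unit clauses for $\alpha_0$ and $\beta_0$ together with $F'^A,G'^A$. The bookkeeping is long but the only non-mechanical input is the vertical analogue of \equref{Cor}, and so the proof reduces to a diagram chase of this type, which is why we omit the detailed pasting diagrams and merely indicate how the stated formulas assemble.
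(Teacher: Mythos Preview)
Your proposal is correct and is in fact more detailed than what the paper provides: the paper gives no explicit proof of this lemma, merely stating it as the vertical analogue of \leref{horiz comp hor.ps.tr.} (which is itself stated without proof) and noting beforehand that the vertical analogues of \leref{delta F(alfa)} and \equref{Cor} hold. Your verification of the three axioms via the vertical analogues of those auxiliary results is exactly the intended route.

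One small remark: your reference to \equref{3d's} as the structural template for the vertical functoriality check is slightly off, since \equref{3d's} concerns associativity of a triple horizontal composite rather than functoriality of a single composite with respect to composition of 1v-cells; the relevant input is simply the vertical functoriality of $\alpha_0$ and $\beta_0$ together with the pseudofunctor constraints of $F'$ (via the vertical analogue of \leref{delta F(alfa)}), and no associativity-type identity is needed. This does not affect the correctness of your argument.
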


Horizontal compositions of horizontal and of vertical pseudonatural transformations are not strictly associative. 

\medskip

We proceed by defining vertical compositions of horizontal and of vertical pseudonatural transformations. From the respective definitions 
it will be clear that these vertical compositions are strictly associative.

\begin{lma} \lelabel{vert comp hor.ps.tr.}
Vertical composition of two horizontal pseudonatural transformations $\alpha_1: F\Rightarrow G: \Aa\to\Bb$ and 
$\beta_1: G\Rightarrow H:\Aa\to\Bb$, denoted by $\frac{\alpha_1}{\beta_1}$, is well-given by: 
\begin{itemize}
\item for every 0-cell $A$ in $\Aa$ a 1h-cell in $\Bb$:
$$(\frac{\alpha_1}{\beta_1})(A)=\big( F(A)\stackrel{\alpha_1(A)}{\longrightarrow}G(A) \stackrel{\beta_1(A)}{\longrightarrow} H(A) \big),$$ 
\item for every 1v-cell $u:A\to A'$ in $\Aa$ a 2-cell in $\Bb$:
$$(\frac{\alpha_1}{\beta_1})(u)=
\bfig
\putmorphism(-150,50)(1,0)[F(A)`G(A)`\alpha_1(A)]{560}1a
 \putmorphism(430,50)(1,0)[\phantom{F(A)}`H(A) `\beta_1(A)]{620}1a

\putmorphism(-150,-320)(1,0)[F(A')`G(A')`\alpha_1(A')]{600}1a
\putmorphism(-180,50)(0,-1)[\phantom{Y_2}``F(u)]{370}1l
\putmorphism(380,50)(0,-1)[\phantom{Y_2}``G(u)]{370}1r
\put(-30,-130){\fbox{$(\alpha_1)_u$}}

 \putmorphism(460,-320)(1,0)[\phantom{F(A)}`H(A') `\beta_1(A')]{630}1a

\putmorphism(1060,50)(0,-1)[\phantom{Y_2}``H(u)]{370}1r
\put(640,-110){\fbox{$(\beta_1)_u$}}
\efig
$$
\item for every 1h-cell $f:A\to B$  in $\Aa$ a 2-cell in $\Bb$:
$$\delta_{\frac{\alpha_1}{\beta_1},f}=
\bfig

 \putmorphism(-200,-50)(1,0)[F(A)`F(B)` F(f)]{650}1a
 \putmorphism(430,-50)(1,0)[\phantom{F(B)}`G(B) `\alpha_1(B)]{700}1a

 \putmorphism(-200,-450)(1,0)[F(A)`G(A)`\alpha_1(A)]{650}1a
 \putmorphism(430,-450)(1,0)[\phantom{A\ot B}`G(B) `G(f)]{700}1a
 \putmorphism(1050,-450)(1,0)[\phantom{A'\ot B'}` H(B) `\beta_1(B)]{700}1a

\putmorphism(-230,-50)(0,-1)[\phantom{Y_2}``=]{400}1r
\putmorphism(1050,-50)(0,-1)[\phantom{Y_2}``=]{400}1r
\put(300,-240){\fbox{$ \delta_{\alpha_1,f}  $}}
\put(1000,-660){\fbox{$\delta_{\beta_1,f}$}}

 \putmorphism(450,-850)(1,0)[G(A)` H(A) `\beta_1(A)]{700}1a
 \putmorphism(1080,-850)(1,0)[\phantom{A''\ot B'}`G(B). ` H(f)]{700}1a

\putmorphism(450,-450)(0,-1)[\phantom{Y_2}``=]{400}1l
\putmorphism(1750,-450)(0,-1)[\phantom{Y_2}``=]{400}1r
\efig
$$
\end{itemize}
\end{lma}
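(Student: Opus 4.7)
The plan is to verify the three axioms of \deref{hor psnat tr} for the data proposed in the lemma, namely the object components $(\frac{\alpha_1}{\beta_1})(A)$, the 2-cell components $(\frac{\alpha_1}{\beta_1})_u$ at a 1v-cell, and the components $\delta_{\frac{\alpha_1}{\beta_1},f}$ at a 1h-cell. The defining feature of this composite is that each of its prescribed cells in $\Bb$ is the horizontal paste (in the 1h-direction) of the corresponding $\alpha_1$-cell, sitting between the $F$- and $G$-layers, with the corresponding $\beta_1$-cell, sitting between the $G$- and $H$-layers. Consequently each axiom for $\frac{\alpha_1}{\beta_1}$ decomposes into the horizontal paste of the same axiom applied to $\alpha_1$ and to $\beta_1$, glued through the interchange law in $\Bb$.

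For axiom (1), given a 2-cell $a$ in $\Aa$ with horizontal boundaries $f,g$ and vertical boundaries $u,v$, both sides of the pseudonaturality equation for $\frac{\alpha_1}{\beta_1}$ split along the middle column into an $F$-to-$G$ half and a $G$-to-$H$ half. Applying axiom (1) for $\alpha_1$ to the left halves and axiom (1) for $\beta_1$ to the right halves, then reassembling via interchange, produces the required identity. For axiom (2), one checks $(\frac{\alpha_1}{\beta_1})^{vu}$ by horizontally pasting $\alpha_1^{vu}$ with $\beta_1^{vu}$, and the unit condition by pasting the unit axioms for $\alpha_1$ and $\beta_1$; in the latter case the $G$-compositor $G^A$ appears twice adjacent in the middle and cancels with its inverse.

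The most delicate part is axiom (3): one expands $\delta_{\frac{\alpha_1}{\beta_1},gf}$ by its defining formula and then substitutes the axiom (3) expressions for $\delta_{\alpha_1,gf}$ and $\delta_{\beta_1,gf}$. The resulting stack carries four compositor cells, namely $F_{gf}$ and $G_{gf}^{-1}$ from the $\alpha_1$ piece and $G_{gf}$ and $H_{gf}^{-1}$ from the $\beta_1$ piece. The middle pair $G_{gf}/G_{gf}^{-1}$ becomes adjacent after several applications of the interchange law, cancels, and leaves precisely the axiom (3) expression for the composite in terms of $F_{gf}, H_{gf}^{-1}, \delta_{\frac{\alpha_1}{\beta_1},f}$ and $\delta_{\frac{\alpha_1}{\beta_1},g}$. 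The unit case $\delta_{\frac{\alpha_1}{\beta_1},id_A}$ is handled analogously by horizontally pasting the unit axioms of $\alpha_1$ and $\beta_1$ and cancelling the middle $G$-compositor.

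The main obstacle is diagrammatic bookkeeping rather than any conceptual difficulty: each verification requires drawing a sufficiently large grid of 2-cells in $\Bb$ and rearranging them by interchange in the right order so that the middle $G$-compositors become adjacent and cancel. Once the grids are drawn, no argument beyond interchange and the already-established axioms for $\alpha_1$ and $\beta_1$ is required, which also makes the strict associativity of this vertical composition transparent, since horizontal pasting in $\Bb$ is strictly associative on the middle column.
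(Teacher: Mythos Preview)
Your verification is correct and follows exactly the approach the paper implicitly has in mind. In fact, the paper does not supply a proof of this lemma at all: it is stated as a definition-style lemma, with the surrounding text remarking only that ``from the respective definitions it will be clear that these vertical compositions are strictly associative.'' Your write-up therefore fills in precisely the routine diagrammatic check that the paper leaves to the reader, and the mechanism you identify (horizontal pasting of the $\alpha_1$- and $\beta_1$-axioms along the common $G$-column, with the $G$-compositors cancelling in the middle for axiom~3 and the unit parts) is the intended one.
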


\bigskip

\begin{lma} \lelabel{vert comp vert.ps.tr.}
Vertical composition of two vertical pseudonatural transformations $\alpha_0: F\Rightarrow G: \Aa\to\Bb$ and 
$\beta_0: G\Rightarrow H:\Aa\to\Bb$, denoted by $\frac{\alpha_0}{\beta_0}$, is well-given by: 
\begin{itemize}
\item for every 0-cell $A$ in $\Aa$ a 1v-cell on the left below, and for every 1h-cell $f:A\to B$ in $\Aa$ a 2-cell on the right below, both in $\Bb$:
$$(\frac{\alpha_0}{\beta_0})(A)=
\bfig
\putmorphism(-280,500)(0,-1)[F(A)`G(A) `\alpha_0(A)]{450}1l
 \putmorphism(-280,70)(0,-1)[\phantom{F(A)}`H(A) `\beta_0(A)]{450}1l
\efig \qquad
(\frac{\alpha_0}{\beta_0})(f)= 
\bfig
\putmorphism(-250,500)(1,0)[F(A)`F(B)` F(f)]{550}1a
 \putmorphism(-250,50)(1,0)[G(A)`G(B)` G(f)]{550}1a
 \putmorphism(-250,-400)(1,0)[H(A)`H(B)` H(f)]{550}1a

\putmorphism(-280,500)(0,-1)[\phantom{Y_2}``\alpha_0(A)]{450}1l
 \putmorphism(-280,70)(0,-1)[\phantom{F(A)}` `\beta_0(A)]{450}1l

\putmorphism(300,500)(0,-1)[\phantom{Y_2}``\alpha_0(B)]{450}1r
\putmorphism(300,70)(0,-1)[\phantom{Y_2}``\beta_0(B)]{450}1r
\put(-120,290){\fbox{$(\alpha_0)_f$}}
\put(-120,-150){\fbox{$(\beta_0)_f$}}
\efig
$$ 
\item for every 1v-cell $u:A\to A'$ in $\Aa$ a 2-cell in $\Bb$: 
$$\delta_{\frac{\alpha_0}{\beta_0},u}=
\bfig
 \putmorphism(-150,500)(1,0)[F(A)`F(A) `=]{460}1a
\putmorphism(-130,500)(0,-1)[\phantom{Y_2}`F(A') `F(u)]{400}1l
\put(0,250){\fbox{$\delta_{\alpha_0,u}$}}
\putmorphism(-150,-300)(1,0)[G(A')`G(A') `=]{460}1a
\putmorphism(-130,110)(0,-1)[\phantom{Y_2}``\alpha_0(A')]{400}1l
\putmorphism(380,500)(0,-1)[\phantom{Y_2}` `\alpha_0(A)]{400}1r
\putmorphism(380,100)(0,-1)[G(A)` `G(u)]{400}1l
\putmorphism(480,110)(1,0)[`G(A)`=]{460}1a
\putmorphism(390,-700)(1,0)[\phantom{G(A)}`H(A').`=]{570}1a
\putmorphism(920,100)(0,-1)[\phantom{(B, \tilde A')}``\beta_0(A)]{400}1r
\putmorphism(920,-300)(0,-1)[H(A)`` H(u)]{400}1r
\putmorphism(400,-300)(0,-1)[\phantom{(B, \tilde A)}`H( A') `\beta_0(A')]{400}1l
\put(530,-190){\fbox{$\delta_{\beta_0,u}$}}
\efig
$$
\end{itemize}
\end{lma}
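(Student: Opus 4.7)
The plan is to verify, one by one, each axiom of a vertical pseudonatural transformation (the vertical analogue of \deref{hor psnat tr}) for the data $\bigl((\frac{\alpha_0}{\beta_0})(A),\,(\frac{\alpha_0}{\beta_0})(f),\,\delta_{\frac{\alpha_0}{\beta_0},u}\bigr)$ displayed in the statement. The guiding principle is that vertical composition simply stacks the $\alpha_0$-components above the $\beta_0$-components, so each axiom for $\frac{\alpha_0}{\beta_0}$ should decompose, via the interchange law in $\Bb$, into the corresponding axiom for $\alpha_0$ pasted over that for $\beta_0$. Strict associativity of the vertical composition thus defined will be immediate from the strict associativity of vertical composition of 1v- and 2-cells in $\Bb$.

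First I would verify the horizontal functoriality of $(\frac{\alpha_0}{\beta_0})(-)$: for composable 1h-cells $A\stackrel{f}{\to}B\stackrel{g}{\to}C$ the candidate pasting for $(\frac{\alpha_0}{\beta_0})(gf)$ is the $3\times 1$ vertical strip obtained by stacking $(\alpha_0)_{gf}$ above $(\beta_0)_{gf}$; applying the horizontal functoriality axioms for $\alpha_0$ and $\beta_0$ separately and then the interchange law should yield the required equality with the horizontal composition of $(\frac{\alpha_0}{\beta_0})(g)$ and $(\frac{\alpha_0}{\beta_0})(f)$, adjusted by the comparison 2-cells $F_{gf}^{-1}, G_{gf}, H_{gf}$. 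The unit axiom for $id_A$ is entirely analogous, using $F^A, G^A, H^A$.

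Next I would check the vertical functoriality of $\delta_{\frac{\alpha_0}{\beta_0},-}$: for composable 1v-cells $u,v$ in $\Aa$, the candidate pasting for $\delta_{\frac{\alpha_0}{\beta_0},vu}$ is a four-region grid in which $\delta_{\alpha_0,-}$ and $\delta_{\beta_0,-}$ each appear twice; applying the vertical functoriality axioms of $\delta_{\alpha_0,-}$ and $\delta_{\beta_0,-}$ and again the interchange law should rearrange this grid into the prescribed pasting for $vu$. The unit normalization for $\delta_{\frac{\alpha_0}{\beta_0},1^A}$ then follows from the corresponding normalization axioms for $\alpha_0$ and $\beta_0$.

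The main obstacle will be the pseudonaturality axiom, which relates $(\frac{\alpha_0}{\beta_0})(f)$ and $\delta_{\frac{\alpha_0}{\beta_0},u}$ with an arbitrary 2-cell $a$ of $\Aa$. Here the plan is to expand the relevant pasting for $\frac{\alpha_0}{\beta_0}$ into a three-storey rectangle whose left column carries $F(a), G(a), H(a)$ and whose right column carries the three $\alpha_0,\beta_0$-components; inserting the pseudonaturality pasting for $\alpha_0$ into the upper two storeys and that for $\beta_0$ into the lower two storeys produces two grids which meet along the middle $G(a)$-storey, and after a single application of the interchange law the required identity should drop out. At no point do we need to invoke anything beyond the axioms satisfied by $\alpha_0$ and $\beta_0$ and strict associativity of vertical composition in $\Bb$, which confirms that the construction is well-defined.
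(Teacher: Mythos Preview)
Your proposal is correct and matches the paper's treatment: the paper states this lemma without proof, remarking only that ``from the respective definitions it will be clear that these vertical compositions are strictly associative,'' so the routine axiom-by-axiom verification you outline (stacking the $\alpha_0$- and $\beta_0$-data and invoking the interchange law in $\Bb$) is exactly what is implicitly intended.
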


\subsection{2-cells of the tricategory and their compositions} 

Now we may define what will be the 2-cells of our tricategory $\DblPs$ of strict double categories and double pseudofunctors.

\begin{defn} \delabel{double 2-cells}
A {\em double pseudonatural transformation} $\alpha:F\to G$ between double pseudofunctors 
is a quadruple $(\alpha_0, \alpha_1, t^\alpha, r^\alpha)$, where: 

(T1) $\alpha_0:F\Rightarrow G$ is a vertical pseudonatural transformation, and \\ \indent 
 $\alpha_1:F\Rightarrow G$ is a horizontal pseudonatural transformation, 

(T2) the 2-cells $\delta_{\alpha_1,f}$ and $\delta_{\alpha_0,u}$ are invertible when 
$f$ is a 1h-cell component of a horizontal pseudonatural transformation, and 
$u$ is a 1v-cell component of a vertical pseudonatural transformation;

(T3) for every 1h-cell $f:A\to B$ and 1v-cell $u:A\to A'$ in $\Aa$ there are 2-cells in $\Bb$:
$$
\bfig
\putmorphism(-150,500)(1,0)[F(A)`F(B)`F(f)]{600}1a
 \putmorphism(430,500)(1,0)[\phantom{F(A)}`G(B) `\alpha_1(B)]{610}1a
 \putmorphism(-150,100)(1,0)[G(A)`G(B)`G(f)]{1200}1b
\putmorphism(-180,500)(0,-1)[\phantom{Y_2}``\alpha_0(A)]{400}1l
\putmorphism(1050,500)(0,-1)[\phantom{Y_2}``=]{400}1r
\put(350,260){\fbox{$t^\alpha_f$}}
\efig
\qquad\text{and}\qquad
\bfig
 \putmorphism(-150,500)(1,0)[F(A)`G(A)  `\alpha_1(A)]{500}1a
\putmorphism(-180,500)(0,-1)[\phantom{Y_2}`F(A') `F(u)]{400}1l
\putmorphism(-150,-300)(1,0)[G(A')` `=]{400}1a
\putmorphism(-180,100)(0,-1)[\phantom{Y_2}``\alpha_0(A')]{400}1l
\putmorphism(350,500)(0,-1)[\phantom{Y_2}`G(A') `G(u)]{800}1r
\put(50,100){\fbox{$r^\alpha_u$}}
\efig
$$
satisfying: \\
(T3-1)
$$
\bfig
\putmorphism(-150,500)(1,0)[F(A)`F(B)`F(f)]{600}1a
 \putmorphism(450,500)(1,0)[\phantom{F(A)}`G(B) `\alpha_1(B)]{600}1a

 \putmorphism(-150,50)(1,0)[F(A')`F(B')`F(g)]{600}1a
 \putmorphism(450,50)(1,0)[\phantom{F(A)}`G(B') `\alpha_1(B')]{600}1a

\putmorphism(-180,500)(0,-1)[\phantom{Y_2}``F(u)]{450}1l
\putmorphism(450,500)(0,-1)[\phantom{Y_2}``]{450}1r
\putmorphism(300,500)(0,-1)[\phantom{Y_2}``F(v)]{450}0r
\putmorphism(1050,500)(0,-1)[\phantom{Y_2}``G(v)]{450}1r
\put(0,260){\fbox{$F(a)$}}
\put(630,270){\fbox{$(\alpha_1)_v$}}

\putmorphism(-150,-400)(1,0)[G(A')`G(B') `G(g)]{1200}1a

\putmorphism(-180,50)(0,-1)[\phantom{Y_2}``\alpha_0(A')]{450}1l
\putmorphism(1050,50)(0,-1)[\phantom{Y_3}``=]{450}1r
\put(370,-140){\fbox{$t^\alpha_g$}}

\efig=
\bfig
 \putmorphism(-150,500)(1,0)[F(A)`F(A) `=]{500}1a
\putmorphism(-180,500)(0,-1)[\phantom{Y_2}`F(A') `F(u)]{450}1r
\put(0,30){\fbox{$\delta_{\alpha_0,u}$}}
\putmorphism(-150,-400)(1,0)[G(A')`G(A') `=]{520}1a
\putmorphism(-180,50)(0,-1)[\phantom{Y_2}``\alpha_0(A')]{450}1r
\putmorphism(380,500)(0,-1)[\phantom{Y_2}` `\alpha_0(A)]{450}1r
\putmorphism(380,50)(0,-1)[G(A)` `G(u)]{450}1r
\putmorphism(350,500)(1,0)[F(A)`F(B)`F(f)]{600}1a
 \putmorphism(950,500)(1,0)[\phantom{F(A)}`G(B) `\alpha_1(B)]{650}1a
 \putmorphism(470,50)(1,0)[`G(B)`G(f)]{1150}1b
\putmorphism(1570,500)(0,-1)[\phantom{Y_2}``=]{450}1r
\put(880,260){\fbox{$t^\alpha_f$}}
\putmorphism(480,-400)(1,0)[`G(B') `G(g)]{1140}1a
\putmorphism(1570,50)(0,-1)[\phantom{Y_2}``G(v)]{450}1r
\put(850,-200){\fbox{$G(a)$}}
\efig
$$
and 
$$
\bfig
 \putmorphism(-150,500)(1,0)[F(A)`F(B)`F(f)]{600}1a
 \putmorphism(450,500)(1,0)[\phantom{F(A)}`G(B) `\alpha_1(B)]{600}1a
\putmorphism(-180,500)(0,-1)[\phantom{Y_2}`F(A') `F(u)]{450}1l
\put(680,50){\fbox{$r^\alpha_v$}}
\putmorphism(-150,-400)(1,0)[G(A')` `G(g)]{500}1a
\putmorphism(-180,50)(0,-1)[\phantom{Y_2}``\alpha_0(A')]{450}1l
\putmorphism(450,50)(0,-1)[\phantom{Y_2}`G(B')`\alpha_0(B')]{450}1r
\putmorphism(450,500)(0,-1)[\phantom{Y_2}`F(B') `F(v)]{450}1r
\put(0,260){\fbox{$F(a)$}}
\putmorphism(-150,50)(1,0)[\phantom{(B, \tilde A)}``F(g)]{500}1a
\putmorphism(1000,500)(0,-1)[`G(B')`G(v)]{900}1r
\putmorphism(480,-400)(1,0)[\phantom{F(A)}`\phantom{F(A)}`=]{500}1b
\put(0,-170){\fbox{$(\alpha_0)_g$}}
\efig
=
\bfig
\putmorphism(-150,500)(1,0)[F(A)`F(B)`F(f)]{600}1a
 \putmorphism(450,500)(1,0)[\phantom{F(A)}`G(B) `\alpha_1(B)]{680}1a
 \putmorphism(-150,100)(1,0)[F(A)`G(A)`\alpha_1(A)]{600}1a
 \putmorphism(450,100)(1,0)[\phantom{F(A)}`G(B) `G(f)]{680}1a

\putmorphism(-180,500)(0,-1)[\phantom{Y_2}``=]{400}1r
\putmorphism(1100,500)(0,-1)[\phantom{Y_2}``=]{400}1r
\put(350,260){\fbox{$\delta_{\alpha_1,f}$}}
\putmorphism(-180,100)(0,-1)[\phantom{Y_2}`F(A') `F(u)]{400}1l
\putmorphism(-150,-680)(1,0)[G(A')` `=]{480}1a
\putmorphism(-180,-280)(0,-1)[\phantom{Y_2}``\alpha_0(A')]{400}1l
\putmorphism(450,100)(0,-1)[\phantom{Y_2}`G(A') `G(u)]{760}1l
\put(0,-300){\fbox{$r^\alpha_u$}}
\putmorphism(1100,100)(0,-1)[\phantom{Y_2}`G(B') `G(v)]{760}1r
 \putmorphism(450,-660)(1,0)[\phantom{F(A)}` `G(g)]{540}1a
\put(620,-300){\fbox{$G(a)$}} 

\efig
$$
for every 2-cell $a$ in $\Aa$, 

(T3-2) for every composable 1h-cells $f$ and $g$ and every composable 1v-cells $u$ and $v$ it is: 
$$
\bfig

 \putmorphism(440,400)(1,0)[F(B)`F(C)` F(g)]{550}1a
 \putmorphism(1000,400)(1,0)[\phantom{F(B)}`G(C) `\alpha_1(C)]{550}1a

 \putmorphism(-200,0)(1,0)[F(A)`F(B) `F(f)]{600}1a
 \putmorphism(320,0)(1,0)[\phantom{A'\ot B'}` G(B) `\alpha_1(B)]{640}1a
 \putmorphism(900,0)(1,0)[\phantom{A'\ot B'}` G(C) `G(g)]{680}1a

\putmorphism(400,400)(0,-1)[\phantom{Y_2}``=]{400}1l
\putmorphism(1550,400)(0,-1)[\phantom{Y_2}``=]{400}1r
\put(1000,200){\fbox{$ \delta_{\alpha_1,g}$ }}
\put(560,-210){\fbox{$t^\alpha_f$}}

 \putmorphism(-230,-400)(1,0)[G(A)` G(B) ` G(f)]{1210}1a

\putmorphism(-230,0)(0,-1)[\phantom{Y_2}``\alpha_0(A)]{400}1r
\putmorphism(960,0)(0,-1)[\phantom{Y_2}``=]{400}1r
\efig
=
\bfig
 \putmorphism(-100,200)(1,0)[F(A)`F(B)`F(f)]{520}1a
 \putmorphism(420,200)(1,0)[\phantom{F(B)}`F(C) `F(g)]{520}1a
 \putmorphism(930,200)(1,0)[\phantom{F(B)}`G(C) `\alpha_1(C)]{570}1a
\put(20,20){\fbox{$(\alpha_0)_f$}}
\putmorphism(-100,200)(0,-1)[\phantom{Y_2}``\alpha_0(A)]{400}1l
\put(1100,20){\fbox{$t^\alpha_g$}}

\putmorphism(430,200)(0,-1)[\phantom{Y_2}``\alpha_0(B)]{400}1r
\putmorphism(1510,200)(0,-1)[\phantom{Y_2}``=]{400}1r

  \putmorphism(-150,-200)(1,0)[G(A)` G(B) `G(f)]{600}1a
\putmorphism(450,-200)(1,0)[\phantom{F(A)}`G(C) ` G(g)]{1050}1a
\efig
$$ 
and
$$
\bfig
 \putmorphism(920,450)(1,0)[F(A)`F(A) `\alpha_1(A)]{460}1a
\putmorphism(920,450)(0,-1)[\phantom{Y_2}`F(A') `F(u)]{400}1l
\put(1050,250){\fbox{$r^\alpha_u$}}
\putmorphism(920,-400)(1,0)[G(A')`G(A') `=]{460}1a
\putmorphism(1380,450)(0,-1)[\phantom{Y_2}` `G(u)]{850}1r
\putmorphism(440,50)(0,-1)[F(A')`F(A'') `F(v)]{450}1l
\putmorphism(530,60)(1,0)[``=]{300}1a
\putmorphism(560,-850)(1,0)[`G(A'')`=]{430}1a
\putmorphism(920,50)(0,-1)[\phantom{(B, \tilde A')}``\alpha_0(A')]{450}1r
\putmorphism(920,-400)(0,-1)[`` G(v)]{450}1r
\putmorphism(440,-400)(0,-1)[\phantom{(B, \tilde A)}`G(A'') `\alpha_0(A'')]{450}1l
\put(530,-190){\fbox{$\delta_{\alpha_0,v}$}}
\efig
=
\bfig
 \putmorphism(-150,410)(1,0)[F(A)`G(A)  `\alpha_1(A)]{530}1a
\putmorphism(-160,400)(0,-1)[\phantom{Y_2}`F(A') `F(u)]{380}1l
\putmorphism(370,400)(0,-1)[\phantom{Y_2}`G(A') `G(u)]{380}1r
\putmorphism(-160,50)(0,-1)[\phantom{Y_2}`F(A'')`F(v)]{430}1l
\putmorphism(370,50)(0,-1)[\phantom{Y_2}`G(A')`G(v)]{820}1r
\put(-60,240){\fbox{$(\alpha_1)_u$}}
\putmorphism(-70,20)(1,0)[``\alpha_1(A')]{330}1a
\put(20,-210){\fbox{$r^\alpha_v$ }}
\putmorphism(-160,-350)(0,-1)[\phantom{Y_2}``\alpha_0(A'')]{400}1l
\putmorphism(-180,-750)(1,0)[G(A')` `=]{440}1b
\efig
$$

(T3-3) for every composable 1h-cells $f$ and $g$ and every composable 1v-cells $u$ and $v$ it is: 
$$t^\alpha_{gf}=
\bfig
\putmorphism(-150,850)(1,0)[F(A)`F(C)`F(gf)]{1200}1a

 \putmorphism(-180,450)(1,0)[F(A)`F(B)`F(f)]{600}1a
 \putmorphism(420,450)(1,0)[\phantom{F(B)}`F(C) `F(g)]{680}1a
 \putmorphism(1090,450)(1,0)[\phantom{F(B)}`G(C) `\alpha_1(C)]{570}1a
\putmorphism(-180,840)(0,-1)[\phantom{Y_2}``=]{400}1l
\putmorphism(1060,840)(0,-1)[\phantom{Y_2}``=]{400}1r
\put(350,650){\fbox{$F_{gf}$}}
\put(0,250){\fbox{$(\alpha_0)_f$}}
\putmorphism(-180,450)(0,-1)[\phantom{Y_2}``\alpha_0(A)]{400}1l
\put(1000,270){\fbox{$t^\alpha_g$}}

\putmorphism(450,450)(0,-1)[\phantom{Y_2}``\alpha_0(B)]{400}1r
\putmorphism(1660,450)(0,-1)[\phantom{Y_2}``=]{400}1r

  \putmorphism(-150,50)(1,0)[G(A)` G(B) `G(f)]{600}1a
\putmorphism(450,50)(1,0)[\phantom{F(A)}`G(C) ` G(g)]{1200}1a


\putmorphism(-180,50)(0,-1)[\phantom{Y_2}``=]{400}1r
\putmorphism(1640,50)(0,-1)[\phantom{Y_2}``=]{400}1r
\put(650,-180){\fbox{$G^{-1}_{gf}$}}

 \putmorphism(-150,-350)(1,0)[G(A)` G(C) `G(gf)]{1800}1b
\efig
$$ 
and
$$r^\alpha_{vu}=
\bfig
\putmorphism(-600,500)(0,-1)[F(A)`F(A'')`F(vu)]{850}1r
 \putmorphism(-510,500)(1,0)[` F(A)`=]{400}1a
 \putmorphism(-30,500)(1,0)[`G(A)  `\alpha_1(A)]{450}1a
 \putmorphism(400,500)(1,0)[\phantom{(B,A)}` `=]{450}1a
\putmorphism(-180,500)(0,-1)[\phantom{Y_2}`F(A') `F(u)]{450}1r
\put(650,50){\fbox{$G^{vu}$}}
 \putmorphism(-150,-750)(1,0)[G(A'')` `=]{480}1a
\putmorphism(-180,50)(0,-1)[\phantom{Y_2}``F(v)]{400}1r
\putmorphism(-180,-350)(0,-1)[F(A'')``\alpha_0(A'')]{400}1r
\putmorphism(-500,-340)(1,0)[` `=]{200}1a

\putmorphism(450,50)(0,-1)[\phantom{Y_2}`G(A'')`G(v)]{800}1r
\putmorphism(450,500)(0,-1)[\phantom{Y_2}`G(A') `G(u)]{450}1r
\put(80,260){\fbox{$(\alpha_1)_u$}}
\putmorphism(-150,50)(1,0)[\phantom{(B, \tilde A)}``\alpha_1(A')]{500}1a
\putmorphism(960,500)(0,-1)[G(A)`G(A'').`G(vu)]{1250}1r
\putmorphism(470,-750)(1,0)[\phantom{F(A)}`\phantom{F(A)}`=]{450}1a

\put(120,-230){\fbox{$r^\alpha_v$}}
\put(-580,250){\fbox{$(F^{vu})^{-1}$}}
\efig
$$
\end{defn}

By the axiom (v) of a double pseudofunctor in \cite[Definition 6.1]{Shul1} one has:

\begin{lma}
Given three composable 1h-cells $f,g,h$ and three composable 1v-cells $u,v,w$ for a double pseudonatural transformation $\alpha$ 
it is: $t^\alpha_{(hg)f}=t^\alpha_{h(gf)}$ and $r^\alpha_{(wv)u}=r^\alpha_{w(vu)}$. 
\end{lma}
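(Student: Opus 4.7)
\medskip
\noindent\textit{Proof sketch.} I would verify the two equalities symmetrically. For the first, the plan is to expand both sides of $t^\alpha_{(hg)f}=t^\alpha_{h(gf)}$ using the defining formula (T3-3) applied twice, collect the resulting pastings, and reduce the comparison to the coherence axiom (v) of the double pseudofunctors $F$ and $G$.

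First I would apply (T3-3) to $t^\alpha_{(hg)f}$ with the decomposition of the 1h-cell as $(hg)\odot f$. The resulting pasting has $F_{(hg)f}$ at the top, the horizontal juxtaposition $(\alpha_0)_f\odot t^\alpha_{hg}$ in the middle, and $G^{-1}_{(hg)f}$ at the bottom. I then expand the inner $t^\alpha_{hg}$ by a second application of (T3-3), which inserts $F_{hg}$, $(\alpha_0)_g$, $t^\alpha_h$, and $G^{-1}_{hg}$ horizontally tensored with appropriate identities. For $t^\alpha_{h(gf)}$ I decompose as $h\odot(gf)$ first, obtaining $F_{h(gf)}$, $(\alpha_0)_{gf}$, $t^\alpha_h$, $G^{-1}_{h(gf)}$, and then expand $(\alpha_0)_{gf}$ using the horizontal-functoriality axiom for the vertical pseudonatural transformation $\alpha_0$ (the analogue, for vertical transformations, of axiom 3) of \deref{hor psnat tr}), which introduces $F_{gf}$, $(\alpha_0)_f$, $(\alpha_0)_g$ and $G^{-1}_{gf}$.

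At this point, using strict associativity of the horizontal composition of 1h-cells and the interchange law in $\Bb$, the two pastings have \emph{the same middle layer}, namely $(\alpha_0)_f\odot(\alpha_0)_g\odot t^\alpha_h$ (suitably bracketed). Hence both sides differ only in the top compositor-pasting of $F$ and the bottom compositor-pasting of $G^{-1}$. The top of the left-hand pasting is $F_{(hg)f}\odot(Id\ot F_{hg}\ot Id)$, while the top of the right-hand pasting is $F_{h(gf)}\odot(Id\ot F_{gf}\ot Id)$; these agree by axiom (v) of \cite[Definition 6.1]{Shul1} applied to $F$. The bottom layer matches by the same axiom applied to $G$ (after inverting).

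For the identity $r^\alpha_{(wv)u}=r^\alpha_{w(vu)}$, the argument is dual: expand both sides via (T3-3) applied to composable 1v-cells, expand the inner $r^\alpha$ and the nested $(\alpha_1)_{vu}$ using the vertical-functoriality axiom 2) of \deref{hor psnat tr} for the horizontal pseudonatural transformation $\alpha_1$, and reduce to axiom (v) of \cite[Definition 6.1]{Shul1} now for the vertical compositors $F^{vu}$ and $G^{vu}$. The main obstacle will be the purely combinatorial one of keeping track of the several nested pasting diagrams and tensor/juxtaposition placements so that the interchange law cleanly exposes the compositor coherence as the only nontrivial ingredient.
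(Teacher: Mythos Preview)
Your approach is correct and is essentially an expanded version of the paper's proof, which consists of a single line: ``By the axiom (v) of a double pseudofunctor in \cite[Definition 6.1]{Shul1} one has.'' You unpack precisely the reduction the paper leaves implicit: apply (T3-3) twice on each side, collapse the middle layer to $(\alpha_0)_f\odot(\alpha_0)_g\odot t^\alpha_h$, and compare the remaining compositor layers via the associativity coherence (axiom (v)) for $F$ and for $G$.

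One small labeling slip: the axiom you invoke to expand $(\alpha_0)_{gf}$ is the analogue, for vertical pseudonatural transformations, of axiom \emph{2} of \deref{hor psnat tr} (vertical functoriality of $(\alpha_1)_\bullet$), not of axiom 3 (which concerns $\delta_{\alpha,-}$). The content you describe is right; only the cross-reference is off.
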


\begin{rem} \rmlabel{axioms used for t} 
The horizontal and vertical compositions of $t$'s and  $r$'s are defined in the next two Propositions below. 
Axiom (T3-2) in the above definition is introduced in order for $t$'s to satisfy the interchange law (up to isomorphism). 
\end{rem}

The following identity for horizontally composable pseudonatural transformations 
$
\xymatrix{
\Aa \ar@{:= }[r]|{\Downarrow \alpha_i} 
\ar@/^{1pc}/[r]^F \ar@/_{1pc}/[r]_G &  
\Bb \ar@{:= }[r]|{\Downarrow \beta_i} \ar@/^{1pc}/[r]^{F'} \ar@/_{1pc}/[r]_{G'} & \Cc
}
$
for $i=0,1$ and a 1h-cell $f:A\to B$ and a 1v-cell $u:A\to A'$, 
is used in order to prove that the horizontal composition of $t$'s satisfies the axiom (T3-2): 
$$
\bfig
 \putmorphism(-170,400)(1,0)[F\s'F(A)`F\s'F(B)` F\s'F(f)]{600}1a
 \putmorphism(470,400)(1,0)[\phantom{F(B)}`F\s'G(B) `F\s'(\alpha_1(B))]{640}1a
\putmorphism(-170,400)(0,-1)[\phantom{Y_2}``=]{400}1l
\putmorphism(1070,400)(0,-1)[\phantom{Y_2}``=]{400}1r
\put(300,210){\fbox{$\delta_{F\s'(\alpha_1),f}$}}
 \putmorphism(-170,0)(1,0)[F\s'F(A)`F\s'G(A)`F\s'(\alpha_1(A))]{670}1a
 \putmorphism(540,0)(1,0)[\phantom{F(B)}`F\s'G(B) `F\s'G(f)]{550}1a
 \putmorphism(1160,0)(1,0)[\phantom{F(B)}`G'G(B) `\beta_1(G(B))]{570}1a
\put(0,-220){\fbox{$(\beta_0)_{\alpha_1(A)}$}}
\putmorphism(-170,0)(0,-1)[\phantom{Y_2}``\beta_0(F(A))]{400}1l
\put(1020,-220){\fbox{$t^\beta_{G(f)}$}}

\putmorphism(460,0)(0,-1)[\phantom{Y_2}``\beta_0(G(A))]{400}1r
\putmorphism(1700,0)(0,-1)[\phantom{Y_2}``=]{400}1r

  \putmorphism(-170,-400)(1,0)[G'F(A)` G'G(A) `G'(\alpha_1(A))]{670}1b
\putmorphism(550,-400)(1,0)[\phantom{F(A)}`G'G(B) ` G'G(f)]{1200}1b
\efig
$$
$$=
\bfig
\putmorphism(-100,0)(0,-1)[\phantom{Y_2}``=]{400}1l
\putmorphism(1710,0)(0,-1)[\phantom{Y_2}``=]{400}1r
 \putmorphism(-170,400)(1,0)[F\s'F(A)`F\s'F(B)` F\s'F(f)]{600}1a
 \putmorphism(470,400)(1,0)[\phantom{F(B)}`F\s'G(B) `F\s'(\alpha_1(B))]{640}1a
 \putmorphism(1180,400)(1,0)[\phantom{F(B)}`G'G(B) `\beta_1(G(B))]{550}1a
\put(0,210){\fbox{$(\beta_0)_{F(f)}$}}
\putmorphism(-100,400)(0,-1)[\phantom{Y_2}``\beta_0(F(A))]{400}1l
\put(1100,200){\fbox{$t^\beta_{\alpha_1(B)}$}}

\putmorphism(430,400)(0,-1)[\phantom{Y_2}``\alpha_0(B)]{400}1r
\putmorphism(1710,400)(0,-1)[\phantom{Y_2}``=]{400}1r

  \putmorphism(-150,0)(1,0)[G'F(A)` G'F(B) `G'F(f)]{600}1a
\putmorphism(490,0)(1,0)[\phantom{F(A)}`G'G(B) ` G'(\alpha_1(B))]{1260}1a

 \putmorphism(-150,-400)(1,0)[G'F(A)` G'G(A) `G'(\alpha_1(A))]{760}1a
 \putmorphism(600,-400)(1,0)[\phantom{A'\ot B'}` G'G(B) `G'G(f)]{1140}1a
\put(560,-210){\fbox{$\delta_{G'(\alpha_1),f}$}}

\efig
$$

and 

$$
\bfig
\putmorphism(440,670)(0,-1)[`F\s'F(A') `F\s'F(u)]{450}1l
\putmorphism(440,660)(1,0)[F\s'F(A)`F\s'F(A)`=]{500}1a
\putmorphism(1100,660)(1,0)[`G'F(A)`\beta_1(F(A))]{480}1a

\putmorphism(1550,670)(0,-1)[\phantom{Y_2}` `G'(\alpha_0(A))]{450}1r
\putmorphism(1550,200)(0,-1)[\phantom{Y_2}` `G'G(u)]{820}1r
\putmorphism(920,200)(1,0)[F\s'G(A)`G'G(A) `\beta_1(G(A))]{680}1a

\putmorphism(440,-250)(1,0)[F\s'G(A')`F\s'G(A')`=]{550}1a
\putmorphism(920,670)(0,-1)[\phantom{(B, \tilde A')}``F\s'(\alpha_0(A))]{450}1l
\putmorphism(920,200)(0,-1)[`` F\s'G(u)]{450}1r
\putmorphism(440,200)(0,-1)[\phantom{(B, \tilde A)}` `F\s'(\alpha_0(A'))]{450}1l
\put(500,10){\fbox{$\delta_{F\s'(\alpha_0),u}$}}
\put(1050,450){\fbox{$(\beta_1)_{\alpha_0(A)}$}}
\put(1130,-440){\fbox{$r^\beta_{G(u)}$}}

\putmorphism(920,-230)(0,-1)[\phantom{Y_2}` `\beta_0(G(A'))]{400}1l
 \putmorphism(920,-630)(1,0)[G'G(A')`G'G(A') `=]{660}1a
\efig
=
\bfig
 \putmorphism(-270,610)(1,0)[F\s'F(A)`G'F(A)  `\beta_1(F(A))]{670}1a
 \putmorphism(530,610)(1,0)[`G'F(A)  `=]{500}1a

\putmorphism(-160,600)(0,-1)[\phantom{Y_2}` `F\s'F(u)]{380}1l 
\putmorphism(370,600)(0,-1)[\phantom{Y_2}``G'F(u)]{380}1r  
\putmorphism(-160,250)(0,-1)[\phantom{Y_2}`F\s'G(A')`F\s'(\alpha_0(A'))]{430}1l
\putmorphism(370,250)(0,-1)[\phantom{Y_2}``G'(\alpha_0(A'))]{800}1r
\put(-60,440){\fbox{$(\beta_1)_{F(u)}$}}
\putmorphism(-270,220)(1,0)[F\s'F(A')`G'F(A')`\beta_1(F(A'))]{690}1a
\put(-50,10){\fbox{$r^\beta_{\alpha_0(A')} $}}
\putmorphism(-160,-150)(0,-1)[\phantom{Y_2}``\beta_0(G(A'))]{400}1l
\putmorphism(-180,-550)(1,0)[G'G(A')`G'G(A') `=]{630}1b
\putmorphism(600,-550)(1,0)[` G'G(A')`=]{420}1b

\putmorphism(1000,600)(0,-1)[\phantom{Y_2}`G'G(A) `G'(\alpha_0(A))]{380}1r
\putmorphism(1000,250)(0,-1)[\phantom{Y_2}``G'G(u)]{800}1r
\put(520,30){\fbox{$\delta_{G'(\alpha_0),u} $}}
\efig
$$
It is obtained similarly as \equref{Axiom 4}: set $a=\delta_{\alpha_1,f}$ in (T3-1) and use (T3-3), and similarly for the other direction.  

\medskip

For every 1-cell $F$ of $\DblPs$, the identity 2-cell $\Id_F: F\Rightarrow F$ is given by the 2-cells: 
$((\Id_F)_0)_f=\Id_{F(f)}=t^{\Id_F}_f, ((\Id_F)_1)_u=\Id_{F(u)}=r^{\Id_F}_u, \delta_{(\Id_F)_0,u}=\Id_{F(u)}$ and $\delta_{(\Id_F)_1,f}=\Id_{F(f)}$, 
with $(\Id_F)_0(A)$ and $(\Id_F)_1(A)$ being the identity 1v- and 1h-cells on $F(A)$, respectively, $f$ an arbitrary 1h-cell and $u$ an arbitrary 1v-cell. 

For the horizontal and vertical compositions of double pseudonatural transformations we have: 

\begin{prop} \prlabel{horiz comp 2-cells}
A horizontal composition of two double pseudonatural transformations acting between double pseudo functors  
$(\alpha_0, \alpha_1, t^\alpha, r^\alpha): F\Rightarrow G: \Aa\to\Bb$ and 
$(\beta_0, \beta_1, t^\beta, r^\beta): F\s'\Rightarrow G': \Bb\to\Cc$, denoted by $\beta\comp\alpha$, is well-given by: 
\begin{itemize}
\item the horizontal pseudonatural transformation $\beta_1\comp\alpha_1$ from \leref{horiz comp hor.ps.tr.}, 
\item the vertical pseudonatural transformation $\beta_0\comp\alpha_0$ from \leref{horiz comp vert.ps.tr.}, 
\item for every 1h-cell $f:A\to B$ and 1v-cell $u:A\to A'$ in $\Aa$: 2-cells in $\Bb$:
$$t^{\beta\comp\alpha}_f:= t^\beta_f\comp t^\alpha_f = 
\bfig
\putmorphism(-680,450)(1,0)[F\s'F(A)` `=]{360}1a
\putmorphism(-600,450)(0,-1)[\phantom{Y_2}`F\s'G(A)`F\s'(\alpha_0(A))]{400}1l
\putmorphism(-600,50)(0,-1)[\phantom{Y_2}`F\s'G(A)`=]{400}1l
\putmorphism(-600,-320)(0,-1)[\phantom{Y_2}``\beta_0(G(A))]{400}1l
\putmorphism(-640,-700)(1,0)[G'G(A)``=]{360}1b
\put(-580,-160){\fbox{$\delta_{\beta_0,\alpha_0(A)}$}}
 \putmorphism(-180,450)(1,0)[F\s'F(A)`F\s'F(B)`F\s'F(f)]{600}1a
 \putmorphism(450,450)(1,0)[\phantom{F(B)}`F\s'G(B) `F\s'(\alpha_1(B))]{660}1a
 \putmorphism(1180,450)(1,0)[\phantom{F(B)}`G'G(B) `\beta_1(G(B))]{570}1a
\put(0,250){\fbox{$(\beta_0)_{F(f)}$}}
\putmorphism(-110,450)(0,-1)[\phantom{Y_2}``\beta_0(F(A))]{400}1l
\put(1000,270){\fbox{$t^\beta_{\alpha_1(B)}$}}

\putmorphism(450,450)(0,-1)[\phantom{Y_2}``\beta_0(F(B))]{400}1r
\putmorphism(1660,450)(0,-1)[\phantom{Y_2}``=]{400}1r

  \putmorphism(-150,50)(1,0)[G'F(A)` G'F(B) `G'F(f)]{600}1a
\putmorphism(450,50)(1,0)[\phantom{F(A)}`G'G(B) ` G'(\alpha_1(B))]{1200}1a

\putmorphism(-110,50)(0,-1)[\phantom{Y_2}``=]{400}1r
\putmorphism(1640,50)(0,-1)[\phantom{Y_2}``=]{400}1r
\put(800,-160){\fbox{$G'^{-1}_{\alpha_1(B),F(f) }$}}

 \putmorphism(-150,-350)(1,0)[G'F(A)` G'G(B) `G'(\alpha_1(B)F(f))]{1220}1a
 \putmorphism(1200,-350)(1,0)[` G'G(B) `=]{520}1a
\putmorphism(-130,-700)(1,0)[G'G(A)`G'G(B)`G'G(f)]{1200}1b
\putmorphism(1200,-700)(1,0)[`G'G(B)`=]{520}1b
\putmorphism(-110,-340)(0,-1)[\phantom{Y_2}``G'(\alpha_0(A))]{360}1l
\putmorphism(1630,-320)(0,-1)[\phantom{Y_2}``=]{400}1r
\putmorphism(1030,-320)(0,-1)[\phantom{Y_2}``G'(id)]{400}1l
\put(250,-530){\fbox{$G'(t^\alpha_f)$}}
\put(1090,-560){\fbox{$\big(G'^{G(B)}\big)^{-1}$}}

\efig
$$ 
and \vspace{-0,4cm}
$$r^{\beta\comp\alpha}_u:= r^\beta_u\comp r^\alpha_u = 
\bfig
 \putmorphism(-180,850)(1,0)[F\s'F(A)`F\s'G(A)  `F\s'(\alpha_1(A))]{680}1a
 \putmorphism(530,850)(1,0)[\phantom{(B,A)}`F\s'G(A) `=]{490}1a
 \putmorphism(1160,850)(1,0)[`G'G(A) `\beta_1(G(A))]{600}1a 
\putmorphism(-180,850)(0,-1)[\phantom{Y_2}` `=]{350}1l
\putmorphism(1660,850)(0,-1)[``=]{350}1r
\put(540,670){\fbox{$\delta_{\beta_1,\alpha_1(A)}$}}

 \putmorphism(-180,500)(1,0)[F\s'F(A)`G'F(A)  `\beta_1(F(A))]{660}1a
\putmorphism(-180,500)(0,-1)[\phantom{Y_2}`F\s'F(A') `F\s'F(u)]{450}1l
\put(580,-190){\fbox{$G'^{\bullet\bullet}$}}

 \putmorphism(490,500)(1,0)[\phantom{(B,A)}` `=]{350}1a

\putmorphism(-180,70)(0,-1)[\phantom{Y_2}`F\s'G(A')`F\s'(\alpha_0(A'))]{420}1l
\putmorphism(440,70)(0,-1)[\phantom{Y_2}``]{820}1l  
\putmorphism(440,-60)(0,-1)[\phantom{Y_2}``G'(\alpha_0(A'))]{850}0l  

\putmorphism(-180,-330)(0,-1)[\phantom{Y_2}``\beta_0(G(A'))]{420}1l 
\putmorphism(-180,-760)(1,0)[G'G(A')`G'G(A') `=]{660}1a 
\putmorphism(540,-760)(1,0)[\phantom{F(A)}`G'G(A')`=]{490}1a 
\putmorphism(1200,-760)(1,0)[` `=]{300}1a 

\putmorphism(440,500)(0,-1)[\phantom{Y_2}` `G'F(u)]{450}1r
\put(-20,300){\fbox{$(\beta_1)_{F(u)}$}}
\putmorphism(-180,50)(1,0)[\phantom{F\s'F(A')}`G'F(A')`\beta_1(F(A'))]{720}1a
\putmorphism(980,500)(0,-1)[G'F(A)``G'(\bullet\bullet)]{800}1r   
\putmorphism(980,-280)(0,-1)[``=]{470}1r

\putmorphism(1100,-300)(1,0)[G'G(A')`G'G(A')`G'(id)]{600}1a

\putmorphism(1660,-300)(0,-1)[`G'G(A').`=]{460}1r  

\putmorphism(1660,500)(0,-1)[``G'G(u)]{800}1r
 \putmorphism(1140,500)(1,0)[`G'G(A) `G'(\alpha_1(A))]{600}1a 

\put(0,-190){\fbox{$r^\beta_{\alpha_0(A')}$}}
\put(1180,270){\fbox{$G'(r^\alpha_u)$}}
\put(1190,-530){\fbox{$G'_{G(A')}$}}
\efig
$$
\end{itemize}
\end{prop}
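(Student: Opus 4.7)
The plan is to verify that the proposed quadruple $(\beta_0\comp\alpha_0,\beta_1\comp\alpha_1, t^{\beta\comp\alpha}, r^{\beta\comp\alpha})$ satisfies each of the axioms (T1)--(T3-3) of \deref{double 2-cells}. Axiom (T1) is immediate from \leref{horiz comp hor.ps.tr.} and \leref{horiz comp vert.ps.tr.}, where well-definedness of the horizontal composites on the horizontal and on the vertical pseudonatural levels was already established. For (T2) I would read off from the formula in \leref{horiz comp hor.ps.tr.} that $\delta_{\beta_1\comp\alpha_1,f}$ is a vertical pasting of $\delta_{F'(\alpha_1),f}$ and $\delta_{\beta_1,G(f)}$; when $f$ is a 1h-cell component of some horizontal pseudonatural transformation, the second factor is invertible by (T2) applied to $\beta$, and the first factor is invertible by (T2) for $\alpha$ after pushing $F'$ through the pasting as in \leref{delta F(alfa)}. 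The case of $\delta_{\beta_0\comp\alpha_0,u}$ is dual.

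For axiom (T3-1), I would substitute the defining pastings of $t^{\beta\comp\alpha}_f$ and $r^{\beta\comp\alpha}_u$ into the two required identities. Each side then splits into an upper region involving only $F,G$ and $\alpha$-data and a lower region involving only $F',G'$ and $\beta$-data. I would first rewrite the $\alpha$-region using (T3-1) for $\alpha$, then transport the resulting 2-cell across $\beta_0$ or $\beta_1$ by their horizontal pseudonaturality (applied to $F(a)$, respectively to the appropriate $\alpha$-component), and finally apply (T3-1) for $\beta$ to bring the expression into the desired right-hand form. The constraints $G'^{-1}_{\bullet,\bullet}$ and $\big(G'^{G(B)}\big)^{-1}$ that appear in the definition of $t^{\beta\comp\alpha}_f$ cancel against their inverses introduced by applying the $\beta$-half of the axiom.

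Axiom (T3-2) comprises two pasting identities. The first is precisely the identity displayed immediately before the statement of the proposition, which was obtained by setting $a=\delta_{\alpha_1,f}$ in (T3-1) for $\beta$ and invoking (T3-3), and which is exactly what is needed to show that horizontal composition of $t$'s respects composition of 1h-cells. The second identity is derived symmetrically, using the corresponding interchange identity for $r^\alpha,r^\beta$, the pseudonaturality squares $(\beta_1)_{F(u)}$, and the pseudofunctor constraints $G'^{vu}$ of $G'$.

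For axiom (T3-3) I would expand $t^{\beta\comp\alpha}_{gf}$ by its definition and then decompose $t^\alpha_{gf}$ and $t^\beta_{\alpha_1(C)}$ using (T3-3) for $\alpha$ and $\beta$ separately; horizontal pseudonaturality of $\beta_0$ together with the associator $G'_{gf}$ and the unit constraint $G'^{G(C)}$ allow one to reorganise the resulting diagram into the horizontal composite $(t^{\beta\comp\alpha}_g)\comp(t^{\beta\comp\alpha}_f)$. The analogous identity for $r^{\beta\comp\alpha}_{vu}$ is dual, using $F'^{vu}$. The main obstacle throughout is not conceptual but notational: each axiom reduces to a pasting equality in which the constraints $F_\bullet, G_\bullet, F'_\bullet, G'_\bullet, F^\bullet, G^\bullet, F'^{\bullet}, G'^{\bullet}$ of the four double pseudofunctors and the coherence 2-cells of $\alpha,\beta$ proliferate, so the main task is to organise the diagram chase so that the $\alpha$- and $\beta$-halves are handled separately before being recombined by horizontal pseudonaturality of $\beta_0,\beta_1$.
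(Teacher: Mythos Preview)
Your approach matches the paper's: verify each axiom of \deref{double 2-cells} for the composite by reducing to the corresponding axioms for $\alpha$ and $\beta$ separately, using pseudonaturality of $\beta_0,\beta_1$ to interchange the two layers. The paper gives no detailed proof either; its only explicit hints are that the vertical analogue of identity \equref{Cor} is what makes (T3-1) go through, and that the displayed identity immediately preceding the proposition is the key step for (T3-2) --- your sketch invokes both. One small slip: you describe (T3-2) as being about ``composition of 1h-cells'', but that is (T3-3); axiom (T3-2) is the compatibility of $t^\alpha_f$ and $\delta_{\alpha_1,g}$ with $(\alpha_0)_f$ and $t^\alpha_g$, which is indeed what the pre-proposition identity addresses.
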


The non-strict associativity of the horizontal compositions of horizontal and vertical pseudonatural transformations has as a 
consequence the non-strict associativity of the horizontal composition of double pseudonatural transformations. 
In the proof that the horizontal composition of $t$'s satisfies axiom (T3-1) of \deref{double 2-cells} the vertical version of the 
identity \equref{Cor} is used. 

\begin{prop}
A vertical composition of two double pseudonatural transformations acting between double pseudo functors  
$(\alpha_0, \alpha_1, t^\alpha, r^\alpha): F\Rightarrow G: \Aa\to\Bb$ and 
$(\beta_0, \beta_1, t^\beta, r^\beta): G\Rightarrow H: \Aa\to\Bb$, denoted by $\frac{\alpha}{\beta}$, is well-given by: 
\begin{itemize}
\item the horizontal pseudonatural transformation $\frac{\alpha_1}{\beta_1}$ from \leref{vert comp hor.ps.tr.}, 
\item the vertical pseudonatural transformation $\frac{\alpha_0}{\beta_0}$ from \leref{vert comp vert.ps.tr.}, 
\item for every 1h-cell $f:A\to B$ and 1v-cell $u:A\to A'$ in $\Aa$: 2-cells in $\Bb$:
$$t^{\frac{\alpha}{\beta}}_f:= \frac{t^\alpha_f}{t^\beta_f}= 
\bfig

 \putmorphism(-200,150)(1,0)[F(A)`F(B)` F(f)]{550}1a
 \putmorphism(330,150)(1,0)[\phantom{F(B)}`G(B) `\alpha_1(B)]{550}1a

 \putmorphism(-200,-250)(1,0)[G(A)`G(B) `G(f)]{1100}1a
 \putmorphism(840,-250)(1,0)[\phantom{A'\ot B'}` H(B) `\beta_1(B)]{600}1a

\putmorphism(-230,150)(0,-1)[\phantom{Y_2}``\alpha_0(A)]{400}1r
\putmorphism(850,150)(0,-1)[\phantom{Y_2}``=]{400}1r
\put(500,-40){\fbox{$ t^\alpha_f$} }
\put(1000,-480){\fbox{$t^\beta_f$}}

 \putmorphism(-230,-650)(1,0)[H(A)` H(B) ` H(f)]{1680}1a

\putmorphism(-230,-250)(0,-1)[\phantom{Y_2}``\beta_0(A)]{400}1r
\putmorphism(1420,-250)(0,-1)[\phantom{Y_2}``=]{400}1r
\efig
$$ 
and 
$$r^{\frac{\alpha}{\beta}}_u:= \frac{r^\alpha_u}{r^\beta_u}= 
\bfig
 \putmorphism(-150,500)(1,0)[F(A)`G(A) `\alpha_1(A)]{520}1a
\putmorphism(480,500)(1,0)[`H(A)`\beta_1(A)]{460}1a

\putmorphism(-130,500)(0,-1)[\phantom{Y_2}`F(A') `F(u)]{450}1l
\put(0,250){\fbox{$r^\alpha_u$}}
\putmorphism(-150,-400)(1,0)[G(A')`G(A') `=]{500}1a
\putmorphism(-130,50)(0,-1)[\phantom{Y_2}``\alpha_0(A')]{450}1l
\putmorphism(380,500)(0,-1)[\phantom{Y_2}` `G(u)]{900}1r
\putmorphism(390,-850)(1,0)[\phantom{G(A)}`H(A').`=]{570}1a
\putmorphism(920,500)(0,-1)[`` H(u)]{1350}1r
\putmorphism(400,-400)(0,-1)[\phantom{(B, \tilde A)}`H( A') `\beta_0(A')]{450}1l
\put(530,-190){\fbox{$r^\beta_u$}}
\efig
$$
\end{itemize}
\end{prop}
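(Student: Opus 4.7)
The plan is to verify that the quadruple $(\frac{\alpha_0}{\beta_0}, \frac{\alpha_1}{\beta_1}, t^{\frac{\alpha}{\beta}}, r^{\frac{\alpha}{\beta}})$ satisfies each of the axioms (T1)--(T3-3) in \deref{double 2-cells}. Axiom (T1) is immediate: \leref{vert comp vert.ps.tr.} and \leref{vert comp hor.ps.tr.} already assert that $\frac{\alpha_0}{\beta_0}$ and $\frac{\alpha_1}{\beta_1}$ are, respectively, a vertical and a horizontal pseudonatural transformation. For (T2), inspection of the defining pasting diagrams in those lemmas shows that $\delta_{\frac{\alpha_1}{\beta_1},f}$ is the vertical paste of $\delta_{\alpha_1,f}$ above $\delta_{\beta_1,f}$, while $\delta_{\frac{\alpha_0}{\beta_0},u}$ is a horizontal paste of $\delta_{\alpha_0,u}$ and $\delta_{\beta_0,u}$. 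When $f$ (respectively $u$) is a 1h-cell (respectively 1v-cell) component of a horizontal (respectively vertical) pseudonatural transformation, both constituents are invertible by (T2) applied to $\alpha$ and to $\beta$, hence so is their paste.

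For (T3-1), (T3-2) and (T3-3) the strategy is the same in each case: derive the axiom for the composite from the same axiom for $\alpha$ and for $\beta$ by a \emph{vertical stacking} argument. Thus for (T3-1), starting from the pasting with $F(a)$ at the top and $t^{\frac{\alpha}{\beta}}_g$ below, one applies (T3-1) for $\alpha$ in the upper half to transport $a$ through $\alpha$, obtaining $G(a)$ sandwiched between the $\alpha$- and $\beta$-strips, and then applies (T3-1) for $\beta$ in the lower half to transport $G(a)$ through $\beta$, producing $H(a)$; this is precisely the prescribed right-hand side. The $r$-version is analogous. For (T3-2) one rewrites the left-hand pasting by applying the $\alpha$-version in the upper half and the $\beta$-version in the lower half, then invokes the interchange law in $\Bb$ to collect $(\frac{\alpha_0}{\beta_0})_f$ as a vertical paste on the left and $t^{\frac{\alpha}{\beta}}_g$ on the right. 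For (T3-3), substituting the $\alpha$- and $\beta$-formulas into the vertical paste that defines $t^{\frac{\alpha}{\beta}}_{gf}$, the middle occurrences of $G^{-1}_{gf}$ (coming from the bottom of the $t^\alpha_{gf}$-diagram) and $G_{gf}$ (from the top of $t^\beta_{gf}$) cancel, leaving the required expression with $F_{gf}$ at the top, the stacked compositor $(\frac{\alpha_0}{\beta_0})_f \cdot t^{\frac{\alpha}{\beta}}_g$ in the middle, and $H^{-1}_{gf}$ at the bottom. The $r$-case is parallel, with $G^{vu}$ and $(G^{vu})^{-1}$ cancelling in the middle.

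The main obstacle is not conceptual but bookkeeping: the pasting diagrams involved in (T3-2) and (T3-3) are sizable, and one must carefully track the placement of trivial 2-cells, of the compositors $F_{gf}, G_{gf}^{\pm 1}, H_{gf}^{-1}$ (and their vertical analogues $F^{vu}, G^{vu}, (H^{vu})^{-1}$), and of the various horizontal/vertical identity 2-cells, in order to identify the naive vertical paste of the $\alpha$- and $\beta$-side diagrams with the diagram prescribed by the definition of the composite. No new identity among 2-cells is required beyond those already in force for $\alpha$ and $\beta$, so the verification reduces to a diagram chase that proceeds axiom by axiom.
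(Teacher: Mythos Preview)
The paper states this proposition without proof, treating the verification of axioms (T1)--(T3-3) as routine; the only comment following the proposition is that the composition is clearly strictly associative. Your sketch is a correct outline of that routine check: the stacking argument (apply the axiom for $\alpha$ in the upper strip to push $F(a)$ down to $G(a)$, then the axiom for $\beta$ in the lower strip to reach $H(a)$) is precisely how (T3-1) goes, and the $G_{gf}^{-1}/G_{gf}$ (respectively $(G^{vu})^{-1}/G^{vu}$) cancellation is indeed the only point of content in (T3-3). One small refinement worth making explicit when you write it out in full: because $t^{\frac{\alpha}{\beta}}_g$ is a \emph{staircase} paste (with an identity 2-cell on $\beta_1(B')$ filling the upper-right corner), in (T3-1) the cell $(\beta_1)_v$ sits next to that identity filler, so one first uses interchange to isolate the sub-pasting $[F(a)\,\vert\,(\alpha_1)_v]$ over $t^\alpha_g$ before invoking (T3-1) for $\alpha$; the remaining $(\beta_1)_v$ then lines up with $G(a)$ and $t^\beta_g$ for the second application. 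With that bookkeeping made precise, your argument is complete.
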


This composition is clearly strictly associative. The unity constraint 3-cells for the vertical composition of 2-cells ($\lambda$ and $\rho$ from point 6) 
of (TD2) of \seref{tricat} evaluated at 2-cells) will be identities. The unity constraints for the horizontal composition we will discuss in \ssref{unities}.

\subsection{A subclass of the class of 2-cells}

In \cite[Definition 6.3]{Gabi1} {\em double natural transformations} between strict double functors were used, as a 
particular case of {\em generalized natural transformations} from \cite[Definition 3]{BMM}. Adapting the former to the case of 
{\em double pseudo functors} of Shulman, we get the following weakening of \cite[Definition 6.3]{Gabi1}:

\begin{defn} \delabel{Theta}
A {\em $\Theta$-double pseudonatural transformation} between two double pseudofunctors $F\Rightarrow G:\Aa\to\Bb$ is a tripple 
$(\alpha_0, \alpha_1, \Theta^\alpha)$, which we will denote shortly by $\Theta^\alpha$, where: 
\begin{itemize} 
\item $\alpha_0$ is a vertical and $\alpha_1$ a horizontal pseudonatural transformation (from \deref{hor psnat tr} 
and the analogous one), 
\item the axiom (T2) from \deref{double 2-cells} holds, and 
\item for every 0-cell $A$ in $\Aa$ there are 2-cells in $\Bb$:
$$
\bfig
\putmorphism(-150,50)(1,0)[F(A)`G(A)`\alpha_1(A)]{560}1a
\putmorphism(-150,-320)(1,0)[G(A)`G(A)`=]{600}1a
\putmorphism(-180,50)(0,-1)[\phantom{Y_2}``\alpha_0(A)]{370}1l
\putmorphism(410,50)(0,-1)[\phantom{Y_2}``=]{370}1r
\put(30,-120){\fbox{$\Theta^\alpha_A$}}
\efig
$$
so that for every 1h-cell $f:A\to B$ and every 1v-cell $u:A\to A'$ in $\Aa$ the following identities hold: \\
$(\Theta 0)$ 
$$
\bfig
\putmorphism(-150,500)(1,0)[F(A)`F(B)`F(f)]{600}1a
 \putmorphism(450,500)(1,0)[\phantom{F(A)}`G(B) `\alpha_1(B)]{640}1a

 \putmorphism(-150,50)(1,0)[G(A)`G(B)`G(f)]{600}1a
 \putmorphism(450,50)(1,0)[\phantom{F(A)}`G(B) `=]{640}1a

\putmorphism(-180,500)(0,-1)[\phantom{Y_2}``\alpha_0(A)]{450}1l
\putmorphism(450,500)(0,-1)[\phantom{Y_2}``]{450}1r
\putmorphism(300,500)(0,-1)[\phantom{Y_2}``\alpha_0(B)]{450}0r
\putmorphism(1100,500)(0,-1)[\phantom{Y_2}``=]{450}1r
\put(0,260){\fbox{$(\alpha_0)_f$}}
\put(700,270){\fbox{$\Theta^\alpha_B$}}
\efig
\quad=\quad
\bfig
\putmorphism(-150,500)(1,0)[F(A)`F(B)`F(f)]{600}1a
 \putmorphism(450,500)(1,0)[\phantom{F(A)}`G(B) `\alpha_1(B)]{680}1a
 \putmorphism(-150,50)(1,0)[F(A)`G(A)`\alpha_1(A)]{600}1a
 \putmorphism(450,50)(1,0)[\phantom{F(A)}`G(B) `G(f)]{680}1a

\putmorphism(-180,500)(0,-1)[\phantom{Y_2}``=]{450}1r
\putmorphism(1100,500)(0,-1)[\phantom{Y_2}``=]{450}1r
\put(350,260){\fbox{$\delta_{\alpha_1,f}$}}
\putmorphism(-150,-400)(1,0)[F(A')`G(A') `=]{640}1a

\putmorphism(-180,50)(0,-1)[\phantom{Y_2}``\alpha_0(A)]{450}1l
\putmorphism(450,50)(0,-1)[\phantom{Y_2}``]{450}1l
\putmorphism(610,50)(0,-1)[\phantom{Y_2}``=]{450}0l 
\put(20,-180){\fbox{$\Theta^\alpha_A$}} 
\efig
$$
and \\
$(\Theta 1)$ 
$$
\bfig
 \putmorphism(-150,500)(1,0)[F(A)`G(A)  `\alpha_1(A)]{600}1a
\putmorphism(-180,500)(0,-1)[\phantom{Y_2}`F(A') `F(u)]{450}1l
\putmorphism(-150,-400)(1,0)[G(A')` `=]{480}1a
\putmorphism(-180,50)(0,-1)[\phantom{Y_2}``\alpha_0(A')]{450}1l
\putmorphism(450,50)(0,-1)[\phantom{Y_2}`G(A')`=]{450}1r
\putmorphism(450,500)(0,-1)[\phantom{Y_2}`G(A') `G(u)]{450}1r
\put(0,260){\fbox{$(\alpha_1)_u$}}
\putmorphism(-180,50)(1,0)[\phantom{F(A)}``\alpha_1(A')]{500}1a
\put(20,-170){\fbox{$\Theta^\alpha_{A'}$}}
\efig=
\bfig
 \putmorphism(-150,500)(1,0)[F(A)`F(A) `=]{500}1a
 \putmorphism(450,500)(1,0)[` `\alpha_1(A)]{380}1a
\putmorphism(-180,500)(0,-1)[\phantom{Y_2}`F(A') `F(u)]{450}1l
\put(0,-160){\fbox{$\delta_{\alpha_0,u}$}}
\putmorphism(-150,-400)(1,0)[G(A')`G(A'). `=]{500}1a
\putmorphism(-180,50)(0,-1)[\phantom{Y_2}``\alpha_0(A')]{450}1l
\putmorphism(380,500)(0,-1)[\phantom{Y_2}` `\alpha_0(A)]{450}1l
\putmorphism(380,50)(0,-1)[G(A)` `G(u)]{450}1r
\put(550,290){\fbox{$\Theta^\alpha_A$} }
\putmorphism(940,500)(0,-1)[G(A)`G(A)`=]{450}1r
\putmorphism(470,60)(1,0)[``=]{360}1a
\efig
$$
\end{itemize} 
\end{defn}

Let us denote a $\Theta$-double pseudonatural transformation $\Theta^\alpha$ by 
$ \xymatrix{
\Aa  \ar@{:= }[r]|{\Downarrow \Theta^\alpha} \ar@/^{1pc}/[r]^{F} \ar@/_{1pc}/[r]_{G} & \Bb
}$. 

Horizontal composition of $\Theta$-double pseudonatural transformations 
$\xymatrix{
\Aa \ar@{:= }[r]|{\Downarrow \Theta^\alpha} 
\ar@/^{1pc}/[r]^F \ar@/_{1pc}/[r]_G &  
\Bb \ar@{:= }[r]|{\Downarrow \Theta^\beta} \ar@/^{1pc}/[r]^{F'} \ar@/_{1pc}/[r]_{G'} & \Cc
}$ 
is given by
$$\Theta^{\beta\comp\alpha}_A:= \Theta^\beta_A\comp \Theta^\alpha_A = 
\bfig
 \putmorphism(-180,500)(1,0)[F\s'F(A)`F\s'G(A)  ` F\s'(\alpha_1(A))]{670}1a
 \putmorphism(490,500)(1,0)[\phantom{(B,A)}` `=]{350}1a
\putmorphism(-180,500)(0,-1)[\phantom{Y_2}`F\s'G(A) `F\s'(\alpha_0(A))]{450}1l
\putmorphism(440,500)(0,-1)[\phantom{Y_2}` `]{450}1r
\putmorphism(330,410)(0,-1)[\phantom{Y_2}` `F\s'(id)]{450}0r

\put(580,300){\fbox{$ {F\s'^\bullet}^{-1} $}}  
\putmorphism(490,50)(1,0)[\phantom{(B,A)}`F\s'G(A) `=]{550}1a 

\putmorphism(-180,50)(0,-1)[\phantom{Y_2}``=]{450}1l
\putmorphism(440,50)(0,-1)[\phantom{F\s'F(A')}``=]{450}1r 
\put(-40,300){\fbox{$F\s'(\Theta^\alpha_A)$}}

\putmorphism(-200,50)(1,0)[\phantom{F\s'F(A')}`F\s'G(A)`F\s'(id)]{660}1a 
\putmorphism(980,500)(0,-1)[F\s'G(A)``=]{450}1r 
\putmorphism(980,50)(0,-1)[``=]{450}1r

\putmorphism(-200,-400)(1,0)[F\s'G(A)`F\s'G(A)`=]{660}1a 
\putmorphism(500,-400)(1,0)[\phantom{F(A)}`F\s'G(A)`=]{520}1a 

\putmorphism(1660,-380)(0,-1)[``=]{400}1r
 \putmorphism(1170,-400)(1,0)[`G'G(A) `\beta_1(G(A))]{600}1a 
 \putmorphism(1000,-380)(0,-1)[`G'G(A) `\beta_0(G(A))]{400}1l 
 \putmorphism(1160,-800)(1,0)[`G'G(A) `=]{600}1a 

\put(0,-190){\fbox{$F\s'_{G(A)}$}}
\put(650,-190){\fbox{$1$}}
\put(1210,-620){\fbox{$\Theta^\beta_{G(A)}$}}
\efig
$$
and vertical composition of $\Theta$-double pseudonatural transformations 
$\xymatrix{
\Aa \ar@{:=}@/^1pc/[rr] |{\Downarrow \Theta^\alpha} \ar@/^{2pc}/[rr]^{F}  \ar[rr]^G
  \ar@{:=}@/_1pc/[rr] |{\Downarrow \Theta^\beta}  \ar@/_{2pc}/[rr]_{H}  &&
	\Bb }$ 
is given by
$$\frac{\Theta^\alpha_A}{\Theta^\beta_A}=
\bfig

 \putmorphism(-240,150)(1,0)[F(A)`G(A)` \alpha_1(A)]{500}1a

 \putmorphism(-280,-250)(1,0)[G(A)``=]{420}1a  %
 \putmorphism(210,-250)(1,0)[\phantom{A\ot B}`H(A) `\beta_1(A)]{500}1a

\putmorphism(-230,150)(0,-1)[\phantom{Y_2}``\alpha_0(A)]{400}1l
\putmorphism(230,150)(0,-1)[\phantom{Y_2}`G(A)`=]{400}1l

\put(-160,-40){\fbox{$ \Theta^\alpha_A$ }}
\put(350,-440){\fbox{$ \Theta^\beta_A $}}

\putmorphism(250,-250)(0,-1)[\phantom{Y_2}``\beta_0(A)]{400}1l
\putmorphism(690,-250)(0,-1)[``=]{400}1r

 \putmorphism(250,-640)(1,0)[H(A)` H(A). `=]{450}1a
\efig
$$

The following result is directly proved: 

\begin{prop} \prlabel{teta->double}
A $\Theta$-double pseudonatural transformation $\Theta^\alpha$ gives rise to a double pseudonatural transformation $(\alpha_0,\alpha_1, t^\alpha, r^\alpha)$, 
where 
$$t^\alpha_f=
\bfig
\putmorphism(-150,250)(1,0)[F(A)`F(B)`F(f)]{600}1a
 \putmorphism(450,250)(1,0)[\phantom{F(A)}`G(B) `\alpha_1(B)]{600}1a

 \putmorphism(-150,-200)(1,0)[G(A)` G(B)` G(f)]{600}1a
 \putmorphism(450,-200)(1,0)[\phantom{F(A)}`G(B) ` =]{600}1a

\putmorphism(-180,250)(0,-1)[\phantom{Y_2}``\alpha_0(A)]{450}1l
\putmorphism(450,250)(0,-1)[\phantom{Y_2}``]{450}1r
\putmorphism(300,250)(0,-1)[\phantom{Y_2}``\alpha_0(B)]{450}0r
\putmorphism(1050,250)(0,-1)[\phantom{Y_2}``=]{450}1r
\put(0,10){\fbox{$(\alpha_0)_f$}}
\put(650,20){\fbox{$\Theta^\alpha_B$}}
\efig
\quad\text{and}\quad
r^\alpha_u=
\bfig
 \putmorphism(-150,500)(1,0)[F(A)`G(A)  `\alpha_1(A)]{600}1a
\putmorphism(-180,500)(0,-1)[\phantom{Y_2}`F(A') `F(u)]{450}1l
\putmorphism(-150,-400)(1,0)[G(A')` `=]{480}1a
\putmorphism(-180,50)(0,-1)[\phantom{Y_2}``\alpha_0(A')]{450}1l
\putmorphism(450,50)(0,-1)[\phantom{Y_2}`G(A')`=]{450}1r
\putmorphism(450,500)(0,-1)[\phantom{Y_2}`G(A') `G(u)]{450}1r
\put(0,260){\fbox{$(\alpha_1)_u$}}
\putmorphism(-180,50)(1,0)[\phantom{F(A)}``\alpha_1(A')]{500}1a
\put(20,-170){\fbox{$\Theta^\alpha_{A'}$}}
\efig
$$
for every 1h-cell $f:A\to B$ and 1v-cell $u:A\to A'$. Moreover, the class of all $\Theta$-double pseudonatural transformations 
is a subclass of the class of double pseudonatural transformations. 
\end{prop}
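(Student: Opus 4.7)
The plan is to show that the formulas defining $t^{\alpha}_f$ and $r^{\alpha}_u$ from $\Theta^{\alpha}_{A}$, $(\alpha_{0})_{f}$ and $(\alpha_{1})_{u}$ yield a quadruple $(\alpha_{0},\alpha_{1},t^{\alpha},r^{\alpha})$ which satisfies axioms (T1)--(T3) of \deref{double 2-cells}. Axiom (T1) holds by assumption, since $\alpha_{0}$ and $\alpha_{1}$ are the pseudonatural transformations packaged inside $\Theta^{\alpha}$. Axiom (T2) is inherited directly from the (T2)-type condition explicitly required in \deref{Theta}. It remains to verify the three equalities (T3-1), (T3-2), (T3-3).

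Before this, I would first observe that the formulas for $t^{\alpha}_{f}$ and $r^{\alpha}_{u}$ are globular 2-cells in the appropriate sense, i.e.\ their vertical/horizontal boundary 1-cells match what (T3) requires; this is immediate by composing the boundaries of $(\alpha_0)_f$ and $\Theta^\alpha_B$ (respectively $(\alpha_1)_u$ and $\Theta^\alpha_{A'}$) using that the bottom of $\Theta^{\alpha}_{B}$ is the identity 1v-cell and the right of $\Theta^{\alpha}_{A'}$ is the identity 1h-cell.

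For (T3-1), I would paste $t^{\alpha}_{g}$ (respectively $r^{\alpha}_{v}$) on its defining pasting diagram and use the pseudonaturality of $\alpha_{0}$ with respect to the 2-cell $a$ (i.e.\ the axiom 1) of the vertical analogue of \deref{hor psnat tr}) to migrate $F(a)$ across the $\alpha_{0}$-column, turning it into $G(a)$. The remaining $\Theta^{\alpha}_{B}$ (resp.\ $\Theta^{\alpha}_{A'}$) piece together with $G(a)$ then rearranges, via axiom $(\Theta 0)$ (resp.\ $(\Theta 1)$), into the right-hand side of (T3-1), producing $\delta_{\alpha_{1},f}$ and $\delta_{\alpha_{0},u}$ as the suitable compositions.

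For (T3-2), I would expand $t^{\alpha}_{gf}$ and $r^{\alpha}_{vu}$ with the definitions and use the vertical functoriality of $\alpha_{0}$ and the horizontal functoriality of $\alpha_{1}$ (axiom 2) of \deref{hor psnat tr}) to split $(\alpha_{0})_{gf}$ into $(\alpha_{0})_{g}\odot(\alpha_{0})_{f}$ (and similarly for $\alpha_1$), absorbing the $F$- and $G$-pseudo-functoriality constraints. The $\Theta^{\alpha}_{C}$ (resp.\ $\Theta^{\alpha}_{A''}$) tile is then untouched, and the result is the right-hand side of (T3-2). For (T3-3), horizontal functoriality of $\delta_{\alpha_1,-}$ from \deref{hor psnat tr}(3) (and its vertical analogue for $\delta_{\alpha_0,-}$) applied to $t^{\alpha}_{gf}$ and $r^{\alpha}_{vu}$ together with the axioms $(\Theta 0), (\Theta 1)$ complete the proof of the two required equalities.

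The main obstacle is the bookkeeping in the verification of (T3-1), where the four data $F(a), G(a), (\alpha_{0})_{\bullet}, (\alpha_{1})_{\bullet}$ must be combined correctly through the $\Theta$-tiles: it requires using both $(\Theta 0)$ and $(\Theta 1)$ together with pseudonaturality, and one has to keep track of the invertibility hypotheses on $\delta_{\alpha_{0},u}$ and $\delta_{\alpha_{1},f}$ imposed in (T2). Finally, the sub-class claim follows because the assignment $\Theta^{\alpha}\mapsto (\alpha_{0},\alpha_{1},t^{\alpha},r^{\alpha})$ is injective: from the given formulas, $\Theta^{\alpha}_{A}$ is recovered by specializing $t^{\alpha}$ at $f=1_{A}$ (or $r^{\alpha}$ at $u=1^{A}$) and using the unit normalizations $F^{A}, F_{A}$ and the unit axioms in \deref{hor psnat tr}, so no two distinct $\Theta$-transformations can produce the same double pseudonatural transformation.
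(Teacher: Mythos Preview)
Your overall strategy is right and matches the paper's idea: axioms (T1)--(T2) come for free, and the (T3) axioms are obtained by combining the $\Theta$-axioms with the pseudonaturality axioms for $\alpha_0$ and $\alpha_1$. The paper's own proof is a single line, only indicating how (T3-1) follows (the other axioms are declared ``redundant'' after the proposition), so your explicit treatment of (T3-2), (T3-3) and the injectivity argument for the subclass claim goes beyond what the paper writes out.

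There is, however, a labeling slip in your (T3-1) argument. For the $t^\alpha$-identity you pair pseudonaturality of $\alpha_0$ with axiom $(\Theta 0)$, and for the $r^\alpha$-identity you pair it (implicitly) with $(\Theta 1)$. This is the wrong way round. Concretely, the left-hand side of the first (T3-1) identity is
\[
\Big[\,\tfrac{F(a)}{(\alpha_0)_g}\;\Big|\;\tfrac{(\alpha_1)_v}{\Theta^\alpha_{B'}}\,\Big],
\]
and in order to apply axiom 1 for the \emph{vertical} transformation $\alpha_0$ you first need a $\delta_{\alpha_0,v}$ adjacent to $\tfrac{F(a)}{(\alpha_0)_g}$. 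That $\delta_{\alpha_0,v}$ is produced by applying $(\Theta 1)$ (not $(\Theta 0)$) to the right column $\tfrac{(\alpha_1)_v}{\Theta^\alpha_{B'}}=r^\alpha_v$, which rewrites it in terms of $\delta_{\alpha_0,v}$ and $\Theta^\alpha_B$; then axiom 1 for $\alpha_0$ transports $F(a)$ to $G(a)$ and replaces $\delta_{\alpha_0,v}$ by $\delta_{\alpha_0,u}$, yielding the right-hand side directly. Dually, the second (T3-1) identity (for $r^\alpha$) uses $(\Theta 0)$ together with axiom 1 for the \emph{horizontal} transformation $\alpha_1$, not $\alpha_0$. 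Your ``respectively'' clause obscures this asymmetry. The paper's one-line proof cites $(\Theta 1)$ and axiom 1 for $\alpha_1$, which corresponds to yet another route: rewrite $t^\alpha_g$ via $(\Theta 0)$ as $\delta_{\alpha_1,g}$ over $[\Theta^\alpha_{A'}\vert\Id]$, apply axiom 1 for $\alpha_1$, and then use $(\Theta 1)$ on the resulting $r^\alpha_u$; so both routes are available, but in either case the pairing of $\Theta$-axiom with pseudonaturality axiom must be chosen consistently.
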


\begin{proof}
By the axiom $(\Theta 1)$, axiom 1. for the horizontal pseudonatural transformation $\alpha_1$ implies 
axiom (T3-1) for $t^\alpha_f$. 
\end{proof}

\medskip

Thus, from the point of view of $\Theta$-double pseudonatural transformations, the axioms (T3-2) and (T3-3) of 
double pseudonatural transformations become redundant. 

Observe also that given a double pseudonatural transformation $\alpha:F\Rightarrow G$ acting between {\em strict} 
double functors, the 2-cells $t^\alpha_{id_A}$ obey the conditions $(\Theta 0)$ and $(\Theta 1)$ for every 0-cell $A$.

\bigskip

The other way around, observe that setting 
$$
\bfig
\putmorphism(-150,150)(1,0)[F(A)`G(A)`\alpha_1(A)]{560}1a
\putmorphism(-150,-220)(1,0)[G(A)`G(A).`=]{600}1a
\putmorphism(-180,150)(0,-1)[\phantom{Y_2}``\alpha_0(A)]{370}1l
\putmorphism(410,150)(0,-1)[\phantom{Y_2}``=]{370}1r
\put(0,-40){\fbox{$t\Theta^\alpha_A$}}
\efig
:=
\bfig
 \putmorphism(-150,420)(1,0)[F(A)`F(A)`=]{500}1a
\putmorphism(-180,420)(0,-1)[\phantom{Y_2}``=]{370}1l
\putmorphism(320,420)(0,-1)[\phantom{Y_2}``=]{370}1r
 \putmorphism(-150,50)(1,0)[F(A)`F(A)`F(id_A)]{500}1a
 \put(-80,250){\fbox{$(F_A)^{-1}$}} 
\putmorphism(330,50)(1,0)[\phantom{F(A)}`G(A) `\alpha_1(A)]{560}1a
 \putmorphism(-170,-350)(1,0)[F(A)`G(A)`G(id_A)]{1070}1a

\putmorphism(-180,50)(0,-1)[\phantom{Y_2}``\alpha_0(A)]{400}1r
\putmorphism(910,50)(0,-1)[\phantom{Y_2}``=]{400}1r
\put(540,-170){\fbox{$t^\alpha_{id_A}$}}
\put(240,-550){\fbox{$G_A$}}

\putmorphism(-180,-350)(0,-1)[\phantom{Y_2}``=]{350}1l
\putmorphism(920,-350)(0,-1)[\phantom{Y_3}``=]{350}1r
 \putmorphism(-180,-700)(1,0)[G(A)` G(A), `=]{1070}1a
\efig
$$
by (T3-4) we get 
$$t^\alpha_f=
\bfig
 \putmorphism(-100,200)(1,0)[F(A)`F(B)`F(f)]{520}1a
 \putmorphism(420,200)(1,0)[\phantom{F(B)}`F(B) `F(id_B)]{520}1a
 \putmorphism(930,200)(1,0)[\phantom{F(B)}`G(B) `\alpha_1(B)]{570}1a
\put(20,20){\fbox{$(\alpha_0)_f$}}
\putmorphism(-100,200)(0,-1)[\phantom{Y_2}``\alpha_0(A)]{400}1l
\put(1100,20){\fbox{$t^\alpha_{id_B}$}}

\putmorphism(430,200)(0,-1)[\phantom{Y_2}``\alpha_0(B)]{400}1r
\putmorphism(1510,200)(0,-1)[\phantom{Y_2}``=]{400}1r

  \putmorphism(-150,-200)(1,0)[G(A)` G(B) `G(f)]{600}1a
\putmorphism(450,-200)(1,0)[\phantom{F(A)}`G(B), ` G(id_B)]{1050}1a
\efig
$$
and analogously, setting 
$$
\bfig
\putmorphism(-150,150)(1,0)[F(A)`G(A)`\alpha_1(A)]{560}1a
\putmorphism(-150,-220)(1,0)[G(A)`G(A).`=]{600}1a
\putmorphism(-180,150)(0,-1)[\phantom{Y_2}``\alpha_0(A)]{370}1l
\putmorphism(410,150)(0,-1)[\phantom{Y_2}``=]{370}1r
\put(0,-40){\fbox{$r\Theta^\alpha_A$}}
\efig
:=
\bfig
\putmorphism(-610,500)(1,0)[F(A)` `=]{370}1a
\putmorphism(-600,500)(0,-1)[` `=]{450}1l
\putmorphism(-610,50)(1,0)[F(A)`F(A) `=]{450}1a
\put(-460,260){\fbox{$F^A$} }

\putmorphism(-150,500)(1,0)[F(A)`G(A) `\alpha_1(A)]{500}1a
 \putmorphism(450,500)(1,0)[` `=]{330}1a
\putmorphism(-180,500)(0,-1)[\phantom{Y_2}` `F(id_A)]{450}1r
\put(0,-160){\fbox{$r^\alpha_{id_A}$}}
\putmorphism(-150,-400)(1,0)[G(A)`G(A) `=]{500}1a
\putmorphism(-180,50)(0,-1)[\phantom{Y_2}``\alpha_0(A)]{450}1l
\putmorphism(380,500)(0,-1)[\phantom{Y_2}` `G(id_A)]{900}1l
\put(460,260){\fbox{$(G^A)^{-1}$} }
  \putmorphism(870,500)(0,-1)[G(A)`G(A),`=]{900}1r
\putmorphism(440,-400)(1,0)[``=]{340}1a
\efig
$$
one gets 
$$r^\alpha_u=
\bfig
 \putmorphism(-150,410)(1,0)[F(A)`G(A)  `\alpha_1(A)]{530}1a
\putmorphism(-160,400)(0,-1)[\phantom{Y_2}`F(A') `F(u)]{380}1l
\putmorphism(370,400)(0,-1)[\phantom{Y_2}`G(A') `G(u)]{380}1r
\putmorphism(-160,50)(0,-1)[\phantom{Y_2}`F(A')`F(id_{A'})]{430}1l
\putmorphism(370,50)(0,-1)[\phantom{Y_2}`G(A').`G(id_{A'})]{820}1r
\put(-60,240){\fbox{$(\alpha_1)_u$}}
\putmorphism(-70,20)(1,0)[``\alpha_1(A')]{330}1a
\put(20,-210){\fbox{$r^\alpha_{id_{A'}} $}}
\putmorphism(-160,-350)(0,-1)[\phantom{Y_2}``\alpha_0(A')]{400}1l
\putmorphism(-180,-750)(1,0)[G(A')` `=]{440}1b
\efig
$$
By successive applications of (T3-2) and axiom 2. for $\alpha_0$ one gets that $t\Theta^\alpha_{\bullet}$ satisfies the axiom $(\Theta 0)$, 
and similarly $r\Theta^\alpha_{\bullet}$ satisfies the axiom $(\Theta 1)$ of \deref{Theta}. Since the 2-cells $t^\alpha_f$ and $r^\alpha_f$ are 
not related, we can not claim that all double pseudonatural transformations are $\Theta$-double pseudotransformations.




\subsection{3-cells of the tricategory } 

We first define modifications for horizontal and vertical pseudonatural transformations. Since we will then define modifications for 
double pseudonatural transformations, for mnemonic reasons we will denote vertical pseudonatural transformations with index 0 and horizontal 
ones with index 1.

\begin{defn} \delabel{modif vert/horiz psnat tr}
A modification between two vertical pseudonatural transformations $\alpha_0$ and $\beta_0$ which act between double psuedofunctors 
$F\Rightarrow G$ is an application $a: \alpha_0\Rrightarrow\beta_0$ such that for each 1h-cell $f:A\to B$ in $\Aa$ there is a horizontally globular 
2-cell $a_0(A):\alpha_0(A)\Rightarrow\beta_0(A)$ satisfying: 
$$
\bfig
\putmorphism(-100,250)(1,0)[F(A)`F(A)`=]{550}1a
 \putmorphism(430,250)(1,0)[\phantom{F(A)}`F(B) `F(f)]{580}1a

 \putmorphism(-100,-200)(1,0)[G(A)`G(A)`=]{550}1a
 \putmorphism(450,-200)(1,0)[\phantom{F(A)}`G(B) `G(f)]{580}1a

\putmorphism(-100,250)(0,-1)[\phantom{Y_2}``\alpha_0(A)]{450}1l
\putmorphism(450,250)(0,-1)[\phantom{Y_2}``]{450}1r
\putmorphism(300,250)(0,-1)[\phantom{Y_2}``\beta_0(A)]{450}0r
\putmorphism(1020,250)(0,-1)[\phantom{Y_2}``\beta_0(B)]{450}1r
\put(0,10){\fbox{$a_0(A)$}}
\put(620,20){\fbox{$(\beta_0)_f$}}
\efig\quad
=\quad
\bfig
\putmorphism(-100,250)(1,0)[F(A)`F(B)`F(f)]{550}1a
 \putmorphism(430,250)(1,0)[\phantom{F(A)}`F(B) `=]{550}1a

 \putmorphism(-100,-200)(1,0)[G(A)`G(B)`G(f)]{550}1a
 \putmorphism(450,-200)(1,0)[\phantom{F(A)}`G(B) `=]{550}1a

\putmorphism(-100,250)(0,-1)[\phantom{Y_2}``\alpha_0(A)]{450}1l
\putmorphism(450,250)(0,-1)[\phantom{Y_2}``]{450}1r
\putmorphism(300,250)(0,-1)[\phantom{Y_2}``\alpha_0(B)]{450}0r
\putmorphism(990,250)(0,-1)[\phantom{Y_2}``\beta_0(B)]{450}1r
\put(0,40){\fbox{$ (\alpha_0)_f$}}
\put(590,30){\fbox{$a_0(B)$}}
\efig
$$
and 

$$
\bfig
 \putmorphism(-400,-400)(1,0)[` `=]{380}1a
 \putmorphism(80,500)(1,0)[F(A)`F(A) `=]{500}1a
\putmorphism(-480,50)(0,-1)[F(A')`G(A')`\alpha_0(A')]{450}1l
\putmorphism(-380,60)(1,0)[``=]{360}1a
\putmorphism(70,500)(0,-1)[\phantom{Y_2}`F(A') `F(u)]{450}1l
\put(200,270){\fbox{$\delta_{\beta_0,u}$}}
\putmorphism(80,-400)(1,0)[G(A')`G(A') `=]{500}1a
\putmorphism(70,50)(0,-1)[\phantom{Y_2}``\beta_0(A')]{450}1r
\putmorphism(580,500)(0,-1)[\phantom{Y_2}` `\beta_0(A)]{450}1r
\putmorphism(580,50)(0,-1)[G(A)` `G(u)]{450}1r
\put(-370,-180){\fbox{$a_0(A')$} }
\efig
=
\bfig
 \putmorphism(-150,500)(1,0)[F(A)`F(A) `=]{500}1a
 \putmorphism(450,500)(1,0)[` `=]{380}1a
\putmorphism(-180,500)(0,-1)[\phantom{Y_2}`F(A') `F(u)]{450}1l
\put(0,-160){\fbox{$\delta_{\alpha_0,u}$}}
\putmorphism(-150,-400)(1,0)[G(A')`G(A'). `=]{500}1a
\putmorphism(-180,50)(0,-1)[\phantom{Y_2}``\alpha_0(A')]{450}1l
\putmorphism(380,500)(0,-1)[\phantom{Y_2}` `\alpha_0(A)]{450}1l
\putmorphism(380,50)(0,-1)[G(A)` `G(u)]{450}1r
\put(520,290){\fbox{$a_0(A)$} }
\putmorphism(940,500)(0,-1)[F(A)`G(A)`\beta_0(A)]{450}1r
\putmorphism(470,60)(1,0)[``=]{360}1a
\efig
$$

A modification between two horizontal pseudonatural transformations $\alpha_1$ and $\beta_1$ which act between double psuedofunctors 
$F\Rightarrow G$ is an application $a: \alpha_1\Rrightarrow\beta_1$ such that for each 1v-cell $u:A\to A'$ in $\Aa$ there is a vertically globular 2-cell 
$a_1(A):\alpha_1(A)\Rightarrow\beta_1(A)$ satisfying: 
$$
\bfig
 \putmorphism(-150,470)(1,0)[F(A)`G(A)  `\alpha_1(A)]{490}1a
\putmorphism(-160,470)(0,-1)[\phantom{Y_2}`F(A) `=]{450}1l
\putmorphism(-160,50)(0,-1)[\phantom{Y_2}``F(u)]{420}1l
\putmorphism(370,50)(0,-1)[\phantom{Y_2}`G(A')`G(u)]{420}1r
\putmorphism(370,470)(0,-1)[\phantom{Y_2}`G(A) `=]{450}1r
\put(-40,260){\fbox{$a_1(A)$}}
\putmorphism(-200,20)(1,0)[\phantom{F(A)}``\beta_1(A)]{460}1a
\putmorphism(-200,-350)(1,0)[G(A')` `\beta_1(A')]{440}1b
\put(-40,-180){\fbox{$(\beta_1)_u$ }}
\efig
=
\bfig
 \putmorphism(-150,470)(1,0)[F(A)`G(A)  `\alpha_1(A)]{470}1a
\putmorphism(-160,470)(0,-1)[\phantom{Y_2}`F(A') `F(u)]{450}1l
\putmorphism(370,470)(0,-1)[\phantom{Y_2}`G(A') `G(u)]{450}1r

\putmorphism(-160,50)(0,-1)[\phantom{Y_2}``=]{420}1l
\putmorphism(370,50)(0,-1)[\phantom{Y_2}`G(A')`=]{420}1r
\put(-40,260){\fbox{$(\alpha_1)_u$}}
\putmorphism(-170,20)(1,0)[\phantom{F(A)}``\alpha_1(A')]{430}1a
\putmorphism(-190,-350)(1,0)[F(A')` `\beta_1(A')]{440}1b
\put(-40,-170){\fbox{$a_1(A') $}}
\efig
$$
and 
$$
\bfig
\putmorphism(-150,500)(1,0)[F(A)`F(B)`F(f)]{600}1a
 \putmorphism(450,500)(1,0)[\phantom{F(A)}`G(B) `\alpha_1(B)]{680}1a
 \putmorphism(-150,50)(1,0)[F(A)`G(A)`\alpha_1(A)]{600}1a
 \putmorphism(450,50)(1,0)[\phantom{F(A)}`G(B) `G(f)]{680}1a

\putmorphism(-180,500)(0,-1)[\phantom{Y_2}``=]{450}1r
\putmorphism(1100,500)(0,-1)[\phantom{Y_2}``=]{450}1r
\put(350,260){\fbox{$\delta_{\alpha_1,f}$}}
\putmorphism(-150,-400)(1,0)[F(A)`G(A) `\beta_1(A)]{640}1a

\putmorphism(-180,50)(0,-1)[\phantom{Y_2}``=]{450}1l
\putmorphism(450,50)(0,-1)[\phantom{Y_2}``]{450}1l
\putmorphism(610,50)(0,-1)[\phantom{Y_2}``=]{450}0l 
\put(20,-180){\fbox{$a_1(A)$}} 
\efig\quad=\quad
\bfig
\putmorphism(-150,0)(1,0)[F(A)`F(B)`F(f)]{600}1a
 \putmorphism(450,0)(1,0)[\phantom{F(A)}`G(B) `\beta_1(B)]{680}1a
 \putmorphism(-150,-450)(1,0)[F(A)`G(A)`\beta_1(A)]{600}1a
 \putmorphism(450,-450)(1,0)[\phantom{F(A)}`G(B). `G(f)]{680}1a

\putmorphism(-180,0)(0,-1)[\phantom{Y_2}``=]{450}1r
\putmorphism(1100,0)(0,-1)[\phantom{Y_2}``=]{450}1r
\put(350,-240){\fbox{$\delta_{\beta_1,f}$}}
\putmorphism(450,450)(1,0)[F(B)`G(B) `\alpha_1(B)]{640}1a

\putmorphism(450,450)(0,-1)[\phantom{Y_2}``=]{450}1l
\putmorphism(1100,450)(0,-1)[\phantom{Y_2}``=]{450}1l 
\put(600,250){\fbox{$a_1(B)$}} 
\efig
$$

\end{defn}

Now, 3-cells for our tricategory $\DblPs$ will be modifications which we define here:

\begin{defn} \delabel{modif 3}
A modification between two double pseudonatural transformations $\alpha=(\alpha_0,\alpha_1,t^\alpha, r^\alpha)$ and 
$\beta=(\beta_0,\beta_1, t^\beta, r^\beta)$ which act between double pseudofunctors $F\Rightarrow G$ is an application 
$a: \alpha\Rrightarrow\beta$ consisting of a modification $a_0$ for vertical pseudonatural transformations and a 
modification $a_1$ for horizontal pseudonatural transformations, such that for each 0-cell $A$ in $\Aa$ it holds: 
\begin{equation} \eqlabel{modif t}
\bfig
\putmorphism(900,650)(1,0)[F(B)`G(B)  `\alpha_1(B)]{520}1a
\putmorphism(890,650)(0,-1)[\phantom{Y_2}` `=]{380}1l
\putmorphism(1420,650)(0,-1)[\phantom{Y_2}` `=]{380}1r
\put(1020,470){\fbox{$a_1(B)$}}

\putmorphism(-100,250)(1,0)[F(A)`F(A)`=]{550}1a
 \putmorphism(430,250)(1,0)[\phantom{F(A)}`F(B) `F(f)]{480}1a
 \putmorphism(900,250)(1,0)[\phantom{F(A)}`G(B) `\beta_1(B)]{520}1a
\putmorphism(1420,250)(0,-1)[\phantom{Y_2}``=]{450}1r

 \putmorphism(-100,-200)(1,0)[G(A)`G(A)`=]{550}1a
 \putmorphism(450,-200)(1,0)[\phantom{F(A)}`G(B) `G(f)]{980}1a

\putmorphism(-100,250)(0,-1)[\phantom{Y_2}``\alpha_0(A)]{450}1l
\putmorphism(450,250)(0,-1)[\phantom{Y_2}``]{450}1r
\putmorphism(300,250)(0,-1)[\phantom{Y_2}``\beta_0(A)]{450}0r

\put(0,10){\fbox{$a_0(A)$}}
\put(1070,20){\fbox{$t^\beta_f$}}
\efig\quad
=\quad
\bfig
\putmorphism(-80,250)(1,0)[F(A)`F(B)`F(f)]{530}1a
 \putmorphism(450,250)(1,0)[\phantom{F(A)}`G(B) `\alpha_1(B)]{580}1a
 \putmorphism(-80,-150)(1,0)[F(A)`G(A)`G(f)]{1120}1b
\putmorphism(-100,250)(0,-1)[\phantom{Y_2}``\alpha_0(A)]{400}1r
\putmorphism(1020,250)(0,-1)[\phantom{Y_2}``=]{400}1r
\put(350,30){\fbox{$t^\alpha_f$}}
\efig
\end{equation}
and
$$
\bfig
 \putmorphism(-150,410)(1,0)[F(A)`G(A)  `\alpha_1(A)]{490}1a
\putmorphism(-160,430)(0,-1)[\phantom{Y_2}`F(A) `=]{410}1l
\putmorphism(370,430)(0,-1)[\phantom{Y_2}`G(A) `=]{410}1r
\putmorphism(-160,50)(0,-1)[\phantom{Y_2}``F(u)]{420}1l
\putmorphism(370,50)(0,-1)[\phantom{Y_2}`G(A')`G(u)]{820}1r
\put(-60,240){\fbox{$a_1(A)$}}

\putmorphism(-180,20)(1,0)[\phantom{F(A)}``\beta_1(A)]{460}1a
\put(20,-180){\fbox{$r^\beta_u $}}

\putmorphism(-160,-350)(0,-1)[\phantom{Y_2}``\beta_0(A')]{400}1r
\putmorphism(-620,-350)(0,-1)[\phantom{Y_2}``\alpha_0(A')]{400}1l
\putmorphism(-620,-350)(1,0)[F(A')`F(A') `=]{440}1b
\putmorphism(-630,-750)(1,0)[G(A')` `=]{340}1b
\putmorphism(-180,-750)(1,0)[G(A')` `=]{440}1b
\put(-540,-560){\fbox{$a_0(A')$}}

\efig
=
\bfig
 \putmorphism(-150,500)(1,0)[F(A)`G(A)  `\alpha_1(A)]{500}1a
\putmorphism(-180,500)(0,-1)[\phantom{Y_2}`F(A') `F(u)]{400}1l
\putmorphism(-150,-300)(1,0)[G(A')` `=]{400}1a
\putmorphism(-180,100)(0,-1)[\phantom{Y_2}``\alpha_0(A')]{400}1l
\putmorphism(350,500)(0,-1)[\phantom{Y_2}`G(A'). `G(u)]{800}1r
\put(50,100){\fbox{$r^\alpha_u$}}
\efig
$$
\end{defn}

{\em Horizontal composition} of the modifications $a: \alpha\Rrightarrow\beta: F\Rightarrow G$ and $b: \alpha'\Rrightarrow\beta': 
F\s'\Rightarrow G'$, acting between horizontally composable double pseudonatural transformations 
$\alpha'\comp\alpha\Rrightarrow\beta'\comp\beta : F\s'\comp F\Rightarrow G'\comp G$ is given for every 0-cell 
$A$ in $\Aa$ by pairs consisting of
$$(b\comp a)_0(A)=
\bfig
 \putmorphism(-170,470)(1,0)[F\s'F(A)`F\s'F(A)  `=]{570}1a
\putmorphism(-180,470)(0,-1)[\phantom{Y_2}` `=]{400}1l
\putmorphism(-180,80)(0,-1)[\phantom{Y_2}``F\s'(\alpha_0(A))]{400}1l
\putmorphism(370,80)(0,-1)[\phantom{Y_2}``F\s'(\beta_0(A))]{400}1r
\putmorphism(370,470)(0,-1)[\phantom{Y_2}``=]{400}1r  
\put(-40,280){\fbox{$(F\s'_\bullet)^{-1}$}}

\putmorphism(-170,70)(1,0)[F\s'F(A)`F\s'F(A) `F\s'(id)]{570}1a
\putmorphism(-170,-320)(1,0)[F\s'G(A)`F\s'G(A) `F\s'(id)]{570}1b
\put(-140,-130){\fbox{$F\s'(a_0(A))$}}

\putmorphism(-180,-310)(0,-1)[\phantom{Y_2}``=]{400}1l
\putmorphism(370,-310)(0,-1)[\phantom{Y_2}``=]{400}1r
\putmorphism(-170,-710)(1,0)[F\s'G(A)`F\s'G(A) `=]{570}1b
\put(-20,-560){\fbox{$F\s'_\bullet$}}

\putmorphism(-180,-700)(0,-1)[\phantom{Y_2}``\alpha'_0(G(A))]{400}1l
\putmorphism(370,-700)(0,-1)[\phantom{Y_2}``\beta'_0(G(A))]{400}1r
\putmorphism(-170,-1100)(1,0)[G'G(A)`G'G(A) `=]{570}1b
\put(-120,-930){\fbox{$b_0(G(A))$}}

\efig
$$
and 
$$(b\comp a)_1(A)=
\bfig
\putmorphism(-580,250)(0,-1)[\phantom{Y_2}``=]{450}1l
\putmorphism(1520,250)(0,-1)[\phantom{Y_2}``=]{450}1r
\putmorphism(-570,250)(1,0)[F\s'F(A)`\phantom{HF(A)}`=]{550}1a
\putmorphism(-570,-200)(1,0)[F\s'F(A)`\phantom{HF(A)}`=]{550}1a
\put(-460,30){\fbox{$F\s'^\bullet$}}
\put(1040,30){\fbox{$(F\s'^\bullet)^{-1}$}}

 \putmorphism(880,250)(1,0)[\phantom{HF(A)}`F\s'G(A) `=]{550}1a
 \putmorphism(1430,250)(1,0)[\phantom{HF(A)}`F\s'G(A) `\alpha_1'(G(A))]{900}1a
 \putmorphism(880,-200)(1,0)[\phantom{HF(A)}`F\s'G(A) `=]{550}1a
 \putmorphism(1450,-200)(1,0)[\phantom{HF(A)}`F\s'G(A). `\beta_1'(G(A))]{900}1a
\putmorphism(2350,250)(0,-1)[\phantom{Y_2}``=]{450}1l
\put(1680,30){\fbox{$b_1(G(A))$}}

\putmorphism(-20,250)(1,0)[F\s'F(A)`F\s'G(A)` F\s'(\alpha_1(A))]{900}1a
 \putmorphism(-20,-200)(1,0)[F\s'F(A)`F\s'G(A)`F\s'(\beta_1(A))]{900}1a

\putmorphism(-120,250)(0,-1)[\phantom{Y_2}``]{450}1r
\putmorphism(-140,250)(0,-1)[\phantom{Y_2}``F\s'(id)]{450}0r

\putmorphism(940,250)(0,-1)[\phantom{Y_2}``]{450}1l
\putmorphism(960,250)(0,-1)[\phantom{Y_2}``F\s'(id)]{450}0l

\put(180,30){\fbox{$F\s'(a_1(A))$}}

\efig
$$








{\em Vertical composition} of the modifications $a: \alpha\Rrightarrow\beta: F\Rightarrow G$ and $b: \alpha'\Rrightarrow\beta': 
G\Rightarrow H$, acting between vertically composable double pseudonatural transformations $F\stackrel{\alpha}{\Rightarrow} G 
\stackrel{\alpha'}{\Rightarrow}H$ and $F\stackrel{\beta}{\Rightarrow} G \stackrel{\beta'}{\Rightarrow}H$, is given for every 0-cell 
$A$ in $\Aa$ by pairs consisting of
$$(\frac{a}{b})_0(A)=
\bfig
 \putmorphism(-150,470)(1,0)[F(A)`F(A)  `=]{440}1a
\putmorphism(-160,470)(0,-1)[\phantom{Y_2}`G(A) `\alpha_0(A)]{400}1l
\putmorphism(-160,80)(0,-1)[\phantom{Y_2}``\alpha'_0(A)]{400}1l
\putmorphism(300,80)(0,-1)[\phantom{Y_2}`H(A)`\beta'_0(A)]{400}1r
\putmorphism(300,470)(0,-1)[\phantom{Y_2}`G(A) `\beta_0(A)]{400}1r
\put(-80,260){\fbox{$a_0(A)$}}

\putmorphism(-180,70)(1,0)[\phantom{F(A)}``=]{380}1a
\putmorphism(-220,-300)(1,0)[H(A)` `\beta_1(A')]{440}1b
\put(-80,-130){\fbox{$a'_0(A)$}}
\efig
\text{and}\quad
(\frac{a}{b})_1(A)=
\bfig
\putmorphism(-100,250)(1,0)[F(A)`G(A)`\alpha_1(A)]{550}1a
 \putmorphism(430,250)(1,0)[\phantom{F(A)}`H(A) `\alpha'_1(A)]{580}1a

 \putmorphism(-100,-200)(1,0)[F(A)`G(A)`\beta_1(A)]{550}1a
 \putmorphism(450,-200)(1,0)[\phantom{F(A)}`H(A). `\beta'_1(A)]{580}1a

\putmorphism(-100,250)(0,-1)[\phantom{Y_2}``=]{450}1r
\putmorphism(420,250)(0,-1)[\phantom{Y_2}``]{450}1r
 \putmorphism(400,250)(0,-1)[\phantom{Y_2}``=]{450}0r
\putmorphism(1020,250)(0,-1)[\phantom{Y_2}``=]{450}1r
\put(40,10){\fbox{$a_1(A)$}}
\put(560,20){\fbox{$a'_1(A)$}}
\efig
$$

{\em Transversal composition} of the modifications 
$\alpha\stackrel{a}{\Rrightarrow}\beta \stackrel{b}{\Rrightarrow}\gamma : F\Rightarrow G$ is given for every 0-cell 
$A$ in $\Aa$ by pairs consisting of
$$(b\cdot a)_0(A)=
\bfig
\putmorphism(-100,250)(1,0)[F(A)`G(A)`=]{550}1a
 \putmorphism(430,250)(1,0)[\phantom{F(A)}`H(A) `=]{580}1a

 \putmorphism(-100,-200)(1,0)[F(A)`G(A)`=]{550}1a
 \putmorphism(450,-200)(1,0)[\phantom{F(A)}`H(A) `=]{580}1a

\putmorphism(-100,250)(0,-1)[\phantom{Y_2}``]{450}1r
\putmorphism(-120,200)(0,-1)[\phantom{Y_2}``\alpha_0(A)]{450}0r

\putmorphism(420,250)(0,-1)[\phantom{Y_2}``]{450}1r
 \putmorphism(400,200)(0,-1)[\phantom{Y_2}``\beta_0(A)]{450}0r

\putmorphism(1020,250)(0,-1)[\phantom{Y_2}``]{450}1r
\putmorphism(1000,200)(0,-1)[\phantom{Y_2}``\gamma_0(A)]{450}0r

\put(100,100){\fbox{$a_0(A)$}}
\put(660,100){\fbox{$b_0(A)$}}
\efig\quad
\text{and}\quad
(b\cdot a)_1(A)=
\bfig
 \putmorphism(-150,470)(1,0)[F(A)`G(A)  `\alpha_1(A)]{440}1a
\putmorphism(-160,470)(0,-1)[\phantom{Y_2}`F(A) `=]{400}1l
\putmorphism(-160,80)(0,-1)[\phantom{Y_2}`F(A)`=]{400}1l
\putmorphism(300,80)(0,-1)[\phantom{Y_2}`G(A).`=]{400}1r
\putmorphism(300,470)(0,-1)[\phantom{Y_2}`G(A) `=]{400}1r
\put(-80,260){\fbox{$a_1(A)$}}

\putmorphism(-160,70)(1,0)[\phantom{F(A)}``\beta_1(A)]{380}1a
\putmorphism(-100,-300)(1,0)[\phantom{Y_2}` `\gamma_1(A)]{300}1b
\put(-80,-130){\fbox{$b_1(A)$}}
\efig
$$

From the definitions it is clear that vertical and transversal composition of the 3-cells is strictly associative. The associativity in 
the horizontal direction we will treat in \ssref{horiz ass modif}.

\subsection{A subclass of the 3-cells} 

For $\Theta$-double pseudonatural transformations we define modifications as follows:

\begin{defn}
A modification between two $\Theta$-double pseudonatural transformations $\Theta^\alpha\equiv(\alpha_0,\alpha_1,\Theta^\alpha)$ and 
$\Theta^\beta\equiv(\beta_0,\beta_1, \Theta^\beta)$ which act between double psuedofunctors $F\Rightarrow G$ is an application 
$a: \alpha\Rrightarrow\beta$ consisting of a modification $a_0$ for vertical pseudonatural transformations and a 
modification $a_1$ for horizontal pseudonatural transformations, such that for each 0-cell $A$ in $\Aa$ it holds:
$$
\bfig
\putmorphism(450,450)(1,0)[`G(A) `\alpha_1(A)]{460}1a
\putmorphism(380,470)(0,-1)[F(A)` `=]{420}1l
\putmorphism(920,470)(0,-1)[\phantom{Y_2}``=]{420}1r
\put(500,250){\fbox{$a_1(A)$} }

 \putmorphism(-150,50)(1,0)[F(A)`F(A) `=]{500}1a
 \putmorphism(450,50)(1,0)[` `\beta_1(A)]{380}1a
\putmorphism(-180,50)(0,-1)[\phantom{Y_2}` `\alpha_0(A)]{450}1l
\put(-50,-190){\fbox{$a_0(A)$}}
\putmorphism(-150,-400)(1,0)[G(A)`G(A) `=]{500}1a
\putmorphism(360,50)(0,-1)[\phantom{Y_2}` `\beta_0(A)]{450}1r
\put(660,-180){\fbox{$\Theta^\beta_A$} }
\putmorphism(940,50)(0,-1)[G(A)`G(A)`=]{450}1r
\putmorphism(470,-400)(1,0)[``=]{360}1a
\efig\quad
=
\bfig
\putmorphism(-150,50)(1,0)[F(A)`G(A)`\alpha_1(A)]{560}1a
\putmorphism(-150,-320)(1,0)[G(A)`G(A).`=]{600}1a
\putmorphism(-180,50)(0,-1)[\phantom{Y_2}``\alpha_0(A)]{370}1l
\putmorphism(410,50)(0,-1)[\phantom{Y_2}``=]{370}1r
\put(30,-140){\fbox{$\Theta^\alpha_A$}}
\efig
$$
\end{defn}

It is directly proved that modifications between $\Theta$-double pseudonatural transformations are particular cases of modifications 
between double pseudonatural transformations. This gives a sub-tricategory $\DblPs^\Theta$ of the tricategory $\DblPs$. 

\subsection{Horizontal pseudo-associativity of the 2-cells} \sslabel{assoc t's}


In order to check the horizontal associativity for 2-cells $\alpha: F\Rightarrow G, \beta: F\s'\Rightarrow G', \gamma: F''\Rightarrow G''$, 
we should find a modification of double pseudonatural transformations $(\gamma\comp\beta)\comp\alpha\stackrel{\iso}{\to} 
\gamma\comp(\beta\comp\alpha): F''F\s'F\Rightarrow G''G'G$. Because of the symmetry between horizontal and vertical 
pseudonatural transformations and between 2-cells $t^\alpha_f$ and $r^\alpha_u$, for 1h-cells $f$ and 1v-cells $u$, 
it is sufficient to find a corresponding modification of horizontal pseudonatural transformations which satisfies an 
identity of the form of \equref{modif t} proving that, so to say, $t^\gamma\comp(t^\beta\comp t^\alpha)\iso(t^\gamma\comp t^\beta)\comp t^\alpha$, 
which by definition is: $t^{\gamma\comp(\beta\comp\alpha)}\iso t^{(\gamma\comp\beta)\comp\alpha}$. 

To write out the diagrams for these two 2-cells is one of the most complex parts of the check that 
$\DblPs$ is a tricategory. Observe that to do so one should write out the diagram for $G''(t^{\beta\comp\alpha})$, which itself requires 
attention: $G''$ of a composition of 1h- or 1v-cells does not give directly a composition of the respective images of $G''$, neither $G''$ 
of identity 1-cells gives identity 1-cells, in both cases coherence diagrams should be inserted. Also, one needs to use how the 2-cells 
$G''G'^\bullet$ for the composition of 1h-cells and identity 1h-cell are given in terms of $G''$ and $G'^\bullet$. In the comparision 
of the diagrams for $t^{\gamma\comp(\beta\comp\alpha)}$ and $t^{(\gamma\comp\beta)\comp\alpha}$ one uses for example axiom 1. for the 
vertical pseudonatural transformation $\gamma_0$ (in particular, the axiom is applied to the 2-cell $(\gamma_0)_{F\s'F(f)}$), and  
the vertical version of the identity \equref{3d's}. One gets to the following conclusion: \\
\begin{equation} \eqlabel{assoc t's}  
t^{(\gamma\comp\beta)\comp\alpha}=
\end{equation}
$$
\bfig
\putmorphism(-620,250)(1,0)[F''F\s'F(A)`\phantom{HF(A)}`=]{550}1a
\putmorphism(-20,250)(1,0)[F''F\s'F(A)`F''F\s'F(B)` F''F\s'F(f)]{800}1a
\putmorphism(2800,250)(0,-1)[` `=]{1420}1l
\put(1040,-360){\fbox{$t^{\gamma\comp(\beta\comp\alpha)}$}}

 \putmorphism(800,610)(1,0)[` `F''F\s'(\alpha_1(B))]{640}1a
 \putmorphism(1260,610)(1,0)[\phantom{HF(A)}` `F''\beta_1(G(B))]{720}1a  
 \putmorphism(800,660)(0,-1)[ ` `=]{390}1l 
 \putmorphism(1980,640)(0,-1)[` `=]{390}1r
\put(1300,450){\fbox{$(F''_\bullet)^{-1}$}}

 \putmorphism(530,300)(1,0)[` `F'']{750}0a
 \putmorphism(620,250)(1,0)[` `\big(]{750}0a
 \putmorphism(990,250)(1,0)[` `F\s'(\alpha_1(B))]{470}1a
 \putmorphism(1300,250)(1,0)[\phantom{HF(A)}` `]{530}1a  
 \putmorphism(1300,250)(1,0)[\phantom{HF(A)}` `\beta_1(G(B)) ]{550}0a  
 \putmorphism(1500,250)(1,0)[\phantom{HF(A)}` `\big) ]{550}0a  
 \putmorphism(2060,250)(1,0)[F''G'G(B)` `]{750}0a  
 \putmorphism(2250,250)(1,0)[` G''G'G(B)`]{650}1a  
 \putmorphism(2170,270)(1,0)[` `\gamma_1(G'G(B))]{650}0a  

\putmorphism(-580,320)(0,-1)[\phantom{Y_2}``F''\big(]{400}0r  
\putmorphism(-780,280)(0,-1)[\phantom{Y_2}``]{480}1r 
\putmorphism(-980,220)(0,-1)[\phantom{Y_2}``F''F\s'(\alpha_0(A))]{440}0r
 \putmorphism(-480,160)(0,-1)[\phantom{Y_2}``]{380}1r
 \putmorphism(-430,160)(0,-1)[\phantom{Y_2}``F\s'(\alpha_0(A))]{380}0r
\put(-740,-170){\fbox{$F''^\bullet$}}
\putmorphism(-780,-100)(0,-1)[\phantom{Y_2}``]{390}1r
\putmorphism(-980,-80)(0,-1)[\phantom{Y_2}``F''\beta_0(G(A)))]{400}0r
 \putmorphism(-480,-110)(0,-1)[\phantom{Y_2}``]{380}1r
 \putmorphism(-430,-110)(0,-1)[\phantom{Y_2}``\beta_0(G(A))]{380}0r
 \putmorphism(-630,-110)(0,-1)[\phantom{Y_2}`F''G'G(A)`]{380}0r
 \putmorphism(-40,-150)(0,-1)[\phantom{Y_2}``\big)]{380}0r
\putmorphism(-480,-460)(0,-1)[`F''G'G(A)`=]{340}1r
\putmorphism(-480,-770)(0,-1)[`G''G'G(A)`\gamma_0(G'G(A))]{400}1r

\putmorphism(-270,-1170)(1,0)[`G''G'G(B).`G''G'G(f)]{3050}1a

\efig
$$
Thus a candidate for a modification $\tilde a : (\gamma\comp\beta)\comp\alpha \Rrightarrow \gamma\comp(\beta\comp\alpha)$ is induced by the 2-cells 
$\tilde a_0(A)=
\putmorphism(240,250)(1,0)[F''F\s'F(A)`F''F\s'F(A)`=]{600}1a
\putmorphism(620,320)(0,-1)[\phantom{Y_2}``F''\big(]{400}0r  
\putmorphism(220,280)(0,-1)[\phantom{Y_2}``]{480}1r 
\putmorphism(20,220)(0,-1)[\phantom{Y_2}``F''F\s'(\alpha_0(A))]{440}0r
 \putmorphism(720,160)(0,-1)[\phantom{Y_2}``]{380}1r
 \putmorphism(770,160)(0,-1)[\phantom{Y_2}``F\s'(\alpha_0(A))]{380}0r
\put(360,-170){\fbox{$F''^\bullet$}}
\putmorphism(220,-100)(0,-1)[\phantom{Y_2}``]{390}1r
\putmorphism(20,-80)(0,-1)[\phantom{Y_2}``F''\beta_0(G(A)))]{400}0r
 \putmorphism(720,-110)(0,-1)[\phantom{Y_2}``]{380}1r
 \putmorphism(770,-110)(0,-1)[\phantom{Y_2}``\beta_0(G(A))]{380}0r
 \putmorphism(240,250)(1,0)[F''F\s'F(A)`F''F\s'F(A)`=]{600}1a
 \putmorphism(1160,-150)(0,-1)[\phantom{Y_2}``\big)]{380}0r
\putmorphism(240,-510)(1,0)[F''G'G(A)`F''G'G(A)`=]{600}1a
\putmorphism(220,-480)(0,-1)[\phantom{Y_2}``=]{380}1r 
\putmorphism(720,-480)(0,-1)[\phantom{Y_2}``=]{380}1r 
\putmorphism(240,-870)(1,0)[F''G'G(A)`F''G'G(A)`]{600}0a  
\put(460,-780){\fbox{$\Id$}}
\putmorphism(220,-850)(0,-1)[``]{400}1r
\putmorphism(20,-850)(0,-1)[``\gamma_0(G'G(A))]{400}0r
\putmorphism(720,-850)(0,-1)[``\gamma_0(G'G(A))]{400}1r
\putmorphism(240,-1270)(1,0)[G''G'G(A)` G''G'G(A)`=]{600}1a
\hspace{6cm}\text{and}\quad
$%
$$\tilde a_1(B)=
\bfig 
\putmorphism(-570,210)(1,0)[` `F''F\s'(\alpha_1(B))]{580}1a
 \putmorphism(-160,210)(1,0)[\phantom{HF(A)}` `F''\beta_1(G(B))]{720}1a  
  \putmorphism(-680,220)(0,-1)[``=]{350}1r 
\putmorphism(-750,220)(0,-1)[F''F\s'F(B) `F''F\s'F(B) `]{350}0l 
 \putmorphism(680,220)(0,-1)[``=]{380}1l
 \putmorphism(740,200)(0,-1)[F''G'G(B)`  `]{340}0r
\put(-200,50){\fbox{$(F''_\bullet)^{-1}$}}
 \putmorphism(-880,-100)(1,0)[` `F'']{750}0a
 \putmorphism(-800,-150)(1,0)[` `\big(]{750}0a
 \putmorphism(-440,-150)(1,0)[` `F\s'(\alpha_1(B))]{470}1a
 \putmorphism(-150,-150)(1,0)[\phantom{HF(A)}` `]{530}1a  
 \putmorphism(-150,-150)(1,0)[\phantom{HF(A)}` `\beta_1(G(B)) ]{550}0a  
  \putmorphism(60,-150)(1,0)[\phantom{HF(A)}` `\big) ]{550}0a  
 \putmorphism(660,-150)(1,0)[F''G'G(B)` `]{750}0a  
 \putmorphism(850,-150)(1,0)[` G''G'G(B).`]{800}1a  
 \putmorphism(750,-130)(1,0)[` `\gamma_1(G'G(B))]{750}0a  
 \putmorphism(940,210)(1,0)[` G''G'G(B)`\gamma_1(G'G(B))]{700}1a  
 \putmorphism(1580,220)(0,-1)[``=]{380}1l
\put(1030,40){\fbox{$\Id$}}
\efig
$$
It remains to check that the above 2-cells $\tilde a_0(A)$ and $\tilde a_1(A)$ determine modifications of vertical and horizontal 
pseudonatural transformations, respectively. From  \equref{assoc t's} it will then be clear that we have a modification of double 
pseudonatural transformations $\tilde a: (\gamma\comp\beta)\comp\alpha \Rrightarrow \gamma\comp(\beta\comp\alpha)$. Because of the 
symmetry we do the check only for horizontal pseudonatural transformations. For this computation we adopt the following notation 
(we omit the index 1 to simplify the notation). In view of  \leref{horiz comp hor.ps.tr.} we have for the horizontal composition: 
$(\alpha\vert\beta)_u=[F\s'(\alpha_u)\vert\beta_{G(u)}]$, where $u$ is a 1v-cell and the square bracket is understood as horizontal 
juxtaposition of 2-cells. From here we get: 
\begin{equation} \eqlabel{assoc modif}
((\alpha\vert\beta)\vert\gamma)_u=
\bfig 
  \putmorphism(-700,560)(0,-1)[``=]{330}1r 
\putmorphism(-770,550)(0,-1)[F''F\s'F(A) ` `]{350}0l 
 \putmorphism(680,560)(0,-1)[``=]{360}1l
 \putmorphism(740,550)(0,-1)[F''G'G(A)`  `]{340}0r
\put(-160,400){\fbox{$F''_\bullet$}}
 \putmorphism(-880,600)(1,0)[` `F'']{750}0a
 \putmorphism(-800,550)(1,0)[` `\big(]{750}0a
 \putmorphism(-440,550)(1,0)[` `F\s'(\alpha_1(A))]{470}1a
 \putmorphism(-150,550)(1,0)[\phantom{HF(A)}` `]{530}1a  
 \putmorphism(-150,550)(1,0)[\phantom{HF(A)}` `\beta_1(G(A)) ]{550}0a  
  \putmorphism(60,550)(1,0)[\phantom{HF(A)}` `\big) ]{550}0a  

\putmorphism(-570,210)(1,0)[` `F''F\s'(\alpha_1(A))]{580}1a
 \putmorphism(-160,210)(1,0)[\phantom{HF(A)}` `F''\beta_1(G(A))]{680}1a  
  \putmorphism(-700,220)(0,-1)[``]{400}1l
\putmorphism(-770,220)(0,-1)[F''F\s'F(A') ` `]{350}0l 
  \putmorphism(0,220)(0,-1)[``]{400}1l

 \putmorphism(680,220)(0,-1)[``]{420}1l
 \putmorphism(740,200)(0,-1)[F''G'G(A)`  `]{340}0r
\put(-500,40){\fbox{$F''F\s'(\alpha_u)$}}
\put(160,40){\fbox{$F''(\beta_{G(u)})$}}

 \putmorphism(940,-190)(1,0)[` G''G'G(A')`]{720}1a  
 \putmorphism(800,-170)(1,0)[` `\gamma_1(G'G(A'))]{750}0a  
 \putmorphism(950,210)(1,0)[` G''G'G(A)`\gamma_1(G'G(A))]{700}1a  
 \putmorphism(1580,220)(0,-1)[``]{400}1l
\put(1000,80){\fbox{$\gamma_{G'G(u)}$}}

\putmorphism(-570,-190)(1,0)[` `F''F\s'(\alpha_1(A'))]{580}1a
 \putmorphism(-130,-190)(1,0)[\phantom{HF(A)}` `F''\beta_1(G(A'))]{660}1a  
  \putmorphism(-700,-180)(0,-1)[``=]{350}1r 
\putmorphism(-770,-180)(0,-1)[F''F\s'F(A') `F''F\s'F(A') `]{350}0l 
 \putmorphism(680,-180)(0,-1)[``=]{360}1l
 \putmorphism(740,-200)(0,-1)[F''G'G(A')`  `]{340}0r
\put(-200,-350){\fbox{$(F''_\bullet)^{-1}$}}
 \putmorphism(-880,-500)(1,0)[` `F'']{750}0a
 \putmorphism(-800,-550)(1,0)[` `\big(]{750}0a
 \putmorphism(-440,-550)(1,0)[` `F\s'(\alpha_1(A'))]{470}1a
 \putmorphism(-110,-550)(1,0)[\phantom{HF(A)}` `]{530}1a  
 \putmorphism(-110,-550)(1,0)[\phantom{HF(A)}` `\beta_1(G(A')) ]{550}0a  
  \putmorphism(110,-550)(1,0)[\phantom{HF(A)}` `\big) ]{550}0a  
 \putmorphism(720,-550)(1,0)[F''G'G(A')` `]{750}0a  
\efig
=  \threefrac{[F''_\bullet\vert\Id]}{(\alpha\vert(\beta\vert\gamma))_u}{[(F''_\bullet)^{-1}\vert\Id]} 
\end{equation} 
(recall that vertical composition of 2-cells is strictly associative). Now it is easily seen that $\tilde a_1: 
(\gamma_1\comp\beta_1)\comp\alpha_1 \Rrightarrow \gamma_1\comp(\beta_1\comp\alpha_1)$ is indeed a modification, and by our above arguments, 
so is $\tilde a: (\gamma\comp\beta)\comp\alpha \Rrightarrow \gamma\comp(\beta\comp\alpha)$. It is clear that $\tilde a$ is invertible.

Given that the horizontal composition of 1-cells is strictly associative, we set for the 3-cell $\pi$ from (TD7) from \seref{tricat} to be identity.  
We only need to check the identity of 3-cells from (TD7), observe that the only non-trivial 3-cells are the associativity constraints 
evaluated at 2-cells, that is, the 3-cells of the type of the above $\tilde a$.  
For this 
it suffices to prove that the corresponding 
modification $\tilde a_1$ of the horizontal 
pseudonatural transformations 
satisfies such an identity. Let us consider four horizontally composable horizontal pseudonatural transformations: 
$
\xymatrix@C=8pt{ 
 \ar@/^1pc/[rr]^{F}\ar@{}[rr]| {\Downarrow \alpha_1} \ar@/_1pc/[rr]_{ G} &  &  
\ar@/^1pc/[rr]^{F\s'}\ar@{}[rr]|{\Downarrow \beta_1} \ar@/_1pc/[rr]_{ G'} &  &   
\ar@/^1pc/[rr]^{F''}\ar@{}[rr]|{\Downarrow \gamma_1} \ar@/_1pc/[rr]_{ G''} &  &   
\ar@/^1pc/[rr]^{K}\ar@{}[rr]|{\Downarrow \delta_1} \ar@/_1pc/[rr]_{ L} &  & .}  
$
In \equref{assoc modif} we see that 
$\tilde a_1^{-1}$ 
acts by conjugation of $F''_\bullet(\bullet)$ 
on the composition of the first two components and it leaves the third one $\gamma_1$ unchanged. We will write this by
$\tilde a_{\alpha,\beta,\gamma}^{-1}=c^{F''}_{F\s'(\alpha_u),\beta_{G(u)}}\times 1_\gamma$ (we omit indexes 1 to simplify notation). 
For the same reason, in the identity to check 
the last one of the four components in the composition will remain untouched: 
$$(((\alpha\vert\beta)\vert\gamma)\vert\delta) \stackrel{\tilde a_{\alpha,\beta,\gamma}\times 1_\delta}{\mapsto} 
((\alpha\vert(\beta\vert\gamma))\vert\delta) \stackrel{\tilde a_{\alpha,\beta\gamma,\delta}}{\mapsto}
(\alpha\vert((\beta\vert\gamma)\vert\delta)) \stackrel{1_\alpha\times\tilde a_{\beta,\gamma,\delta}}{\mapsto}
(\alpha\vert(\beta\vert(\gamma\vert\delta)))$$ 
$\hspace{4cm}  \stackrel{\tilde a_{\alpha\beta,\gamma,\delta}}{\searrow}  \hspace{2cm} ((\alpha\vert\beta)\vert(\gamma\vert\delta)) 
\hspace{2cm}  \stackrel{\tilde a_{\alpha,\beta,\gamma\delta}}{\nearrow} $ \\
Observe that the action of the 3-cells in each arrow in this diagram corresponds to the vertical composition of 2-cells. 
The above diagramatic equation is equivalent to (mind the passage from $\tilde a$ to $\tilde a^{-1})$: 
$$ (c^{F''}_{F\s'(\alpha_u),\beta_{G(u)}}\times 1_\delta)  \comp c^K_{F''F\s'(\alpha_u),(\gamma\comp\beta)_{G(u)}}\comp 
(1_\alpha\times c^K_{F''(\beta_{G(u)}),\gamma_{G'G(u)}})  = 
c^K_{F''(\beta\comp\alpha)_u,\gamma_{G'G(u)}}  \comp  c^{KF''}_{F\s'(\alpha_u),\beta_{G(u)}} .$$
Writing out the diagrams for this equation, applying axiom (6.2) of \cite[Definition 6.1]{Shul1} three times on the left and twice on the right hand-side above, 
one arrives to the same 2-cell, which proves that the identity holds indeed. This finishes the proof of associativity up 
to an isomorphism of horizontal pseudonatural transformations (by symmetry the same follows for vertical ones) and by our comments above 
the same follows for double pseudonatural transformations. 

\subsection{Horizontal associativity of the 3-cells} \sslabel{horiz ass modif}

For the horizontal associativity of the 3-cells due to (TD5) of \seref{tricat} one has to check that the following diagram of 
modifications among the double pseudonatural transformations commutes: 
$$
\bfig
 \putmorphism(-130,160)(1,0)[` `]{360}1a
\putmorphism(380,160)(1,0)[` `]{360}1a
 \putmorphism(740,160)(1,0)[` `]{360}1a

\putmorphism(-120,200)(0,-1)[\phantom{Y_2}``]{380}1l
\putmorphism(210,200)(0,-1)[` `]{380}1l
\putmorphism(370,200)(0,-1)[\phantom{Y_2}``]{380}1l
\putmorphism(720,200)(0,-1)[\phantom{Y_2}``]{380}1l
\putmorphism(1090,200)(0,-1)[\phantom{Y_2}`` ]{380}1l

 \putmorphism(-130,-140)(1,0)[` `]{360}1a
 \putmorphism(380,-140)(1,0)[` `]{360}1a
 \putmorphism(730,-140)(1,0)[` `]{360}1a

\put(-20,0){\fbox{$\alpha$}}
\put(480,0){\fbox{$\beta$}}
\put(830,0){\fbox{$\gamma$}}

\putmorphism(-10,-470)(1,0)[` `\Downarrow]{360}0a 
\putmorphism(440,-470)(1,0)[` `(c\circ b)\circ a]{360}0a

\efig
\quad\quad\stackrel{\tilde a_{\gamma,\beta,\alpha}}{\Rrightarrow}\quad
\bfig
 \putmorphism(-130,160)(1,0)[``]{360}1b
 \putmorphism(230,160)(1,0)[``]{360}1b
 \putmorphism(710,160)(1,0)[``]{360}1b

\putmorphism(-110,200)(0,-1)[\phantom{Y_2}``]{380}1l
\putmorphism(230,200)(0,-1)[` `]{380}1l
\putmorphism(560,200)(0,-1)[\phantom{Y_2}``]{380}1l
\putmorphism(720,200)(0,-1)[\phantom{Y_2}``]{380}1l
\putmorphism(1040,200)(0,-1)[\phantom{Y_2}``]{380}1l

\putmorphism(-130,-140)(1,0)[``]{360}1b
\putmorphism(230,-140)(1,0)[``]{360}1b
 \putmorphism(710,-140)(1,0)[``]{360}1b

\put(0,-10){\fbox{$\alpha$}}
\put(330,-10){\fbox{$\beta$}}
\put(820,-10){\fbox{$\gamma$}}

\putmorphism(100,-470)(1,0)[` `\Downarrow]{360}0a 
\putmorphism(550,-470)(1,0)[` `c\circ(b\circ a)]{360}0a

\efig
$$





\vspace{-0,1cm}
$$
\bfig
 \putmorphism(-130,160)(1,0)[` `]{360}1a
\putmorphism(380,160)(1,0)[` `]{360}1a
 \putmorphism(740,160)(1,0)[` `]{360}1a

\putmorphism(-120,200)(0,-1)[\phantom{Y_2}``]{380}1l
\putmorphism(210,200)(0,-1)[` `]{380}1l
\putmorphism(370,200)(0,-1)[\phantom{Y_2}``]{380}1l
\putmorphism(720,200)(0,-1)[\phantom{Y_2}``]{380}1l
\putmorphism(1090,200)(0,-1)[\phantom{Y_2}`` ]{380}1l

 \putmorphism(-130,-140)(1,0)[` `]{360}1a
 \putmorphism(380,-140)(1,0)[` `]{360}1a
 \putmorphism(730,-140)(1,0)[` `]{360}1a

\put(-20,0){\fbox{$\alpha'$}}
\put(480,0){\fbox{$\beta'$}}
\put(830,0){\fbox{$\gamma'$}}
\efig
\quad\quad\stackrel{\tilde a_{\gamma',\beta',\alpha'}}{\Rrightarrow}\quad
\bfig
 \putmorphism(-130,160)(1,0)[``]{360}1b
 \putmorphism(230,160)(1,0)[``]{360}1b
 \putmorphism(710,160)(1,0)[``]{360}1b

\putmorphism(-110,200)(0,-1)[\phantom{Y_2}``]{380}1l
\putmorphism(230,200)(0,-1)[` `]{380}1l
\putmorphism(560,200)(0,-1)[\phantom{Y_2}``]{380}1l
\putmorphism(720,200)(0,-1)[\phantom{Y_2}``]{380}1l
\putmorphism(1040,200)(0,-1)[\phantom{Y_2}``]{380}1l

\putmorphism(-130,-140)(1,0)[``]{360}1b
\putmorphism(230,-140)(1,0)[``]{360}1b
 \putmorphism(710,-140)(1,0)[``]{360}1b

\put(0,-10){\fbox{$\alpha'$}}
\put(330,-10){\fbox{$\beta'$}}
\put(820,-10){\fbox{$\gamma'$}}
\efig
$$
Again as in the previous subsection, it is sufficient to check the above identity on the 2-cell components of the modifications of the horizontal 
pseudonatural transformations, that is, on $(\tilde a_{\gamma,\beta,\alpha})_1(A)$ and $\big((c\circ b)\circ a\big)_1(A)$ and so on, 
where instead of the 2-cell $\gamma\circ(\beta\circ\alpha)$ we consider the 2-cell component $t^{\gamma\circ(\beta\circ\alpha)}$, 
and similarly for the rest of the three ``vertices'' of the above ``square diagram''. 

Concretely, we should check if $\big(t^{(\gamma\circ\beta)\circ\alpha} \stackrel{(\tilde a_{\gamma,\beta,\alpha})_1(A)}{\mapsto}
t^{\gamma\circ(\beta\circ\alpha)} \stackrel{(c\circ (b\circ a))_1(A) }{\mapsto} t^{\gamma'\circ(\beta'\circ\alpha')}\big)=:\Omega$
gives the same diagram as $\big(t^{(\gamma\circ\beta)\circ\alpha} \stackrel{((c\circ b)\circ a)_1(A)}{\mapsto}
t^{(\gamma'\comp\beta')\circ\alpha'} \stackrel{ (\tilde a_{\gamma',\beta',\alpha'})_1(A)}{\mapsto} t^{\gamma'\circ(\beta'\circ\alpha')}\big):=\Sigma$ (here the 
action of the horizontal 2-cell components $(-)_1(A)$ of the respective modifications $(-)$ is meant as the vertical composition from above of 
$\Id\comp (-)$ on the corresponding 2-cell $t^\bullet$). 

Writing out the diagrams for $((c\circ b)\circ a)_1(A)$ and $(c\circ (b\circ a))_1(A)$ one gets as in \equref{assoc modif}: 
$$(c\circ (b\circ a))_1(A)=  \threefrac{[F''_\bullet\vert\Id]}{((c\circ b)\circ a)_1(A)}{[(F''_\bullet)^{-1}\vert\Id]}. $$
As indicated in \deref{modif 3}, we start from the image in $\Omega$, write the diagram for $(c\circ (b\circ a))_1(A)$ on top of it, 
which by the above equation is vertical composition of three 2-cells, and then as indicated in \equref{assoc t's}  write $(F''_\bullet)^{-1}$ 
on top of it, cancel out and we end up with $\frac{((c\circ b)\circ a)_1(A)}{[(F''_\bullet)^{-1}\vert\Id]}$ on top of $t^{\gamma'\circ(\beta'\circ\alpha')}$. 
The other way around, starting with the image in $\Sigma$ and doing the analogous process similar factors will cancel out, and one gets the same 2-cell as 
the result. This proves the horizontal associativity of the 3-cells.

\subsection{Unity constraints for the horizontal composition }  \sslabel{unities}

Given that $\DblPs$ is 1-strict, in the axiom (TD6) of \seref{tricat} the unity constraints for the 1-cells in $\DblPs$ are identities. Let $\Id$ 
stand shortly for the identity double pseudonatural transformation, and $\Id_0$ and $\Id_1$ for its vertical and horizontal component, respectively. 
Take an arbitrary double pseudonatural transformation $\alpha:F\rightarrow G$, 1v-cell $u:A\to A'$ and a 1h-cell $f:A\to B$. 
For the composition of the horizontal components we find: 
$(\alpha_1\comp\Id_1)_u=[ F(\Id_u)\vert(\alpha_1)_u], (\Id_1\comp\alpha_1)_u=(\alpha_1)_u$,
$$\delta_{\alpha_1\comp\Id_1,f}=
\bfig

 \putmorphism(100,0)(1,0)[F(A)` F(A)` =]{500}1a
 \putmorphism(700,0)(1,0)[`F(B) ` F(f)]{500}1a
 \putmorphism(1270,0)(1,0)[` F(B) `F(id_B)]{500}1a

\putmorphism(50,0)(0,-1)[\phantom{Y_2}``=]{450}1l
\putmorphism(600,0)(0,-1)[\phantom{Y_2}``=]{450}1r
\putmorphism(1170,0)(0,-1)[``=]{450}1r
\putmorphism(1750,0)(0,-1)[\phantom{Y_2}``=]{450}1r

\put(160,-250){\fbox{$ (F_A)^{-1}$ }}
\put(1380,-250){\fbox{$  F_B $}}

 \putmorphism(50,-450)(1,0)[F(A) ` F(A) `F(id_A) ]{580}1b

 \putmorphism(600,-450)(1,0)[\phantom{F''(A)} ` `F(f) ]{500}1b
 \putmorphism(500,-450)(1,0)[\phantom{F'(A)} ` F(B) ` ]{680}0a
	
\putmorphism(1270,-450)(1,0)[ `F(B) `=]{500}1b
	
 \putmorphism(1790,-450)(1,0)[\phantom{ B'}` G(B) ` \alpha_1(B)]{580}1b

\put(840,-250){\fbox{$  \Id $}}
\put(1280,-690){\fbox{$ \delta_{\alpha_1,f}$ }}

\putmorphism(600,-440)(0,-1)[\phantom{Y_2}``=]{450}1l
\putmorphism(2330,-440)(0,-1)[\phantom{Y_2}``=]{450}1r

 \putmorphism(600,-880)(1,0)[F(A)` G(A) ` \alpha_1(A) ]{860}1b
 \putmorphism(1490,-880)(1,0)[\phantom{A''}`G(B) ` G(f)]{850}1b
\efig
$$
$\delta_{\Id_1\comp\alpha_1,f}=\delta_{\alpha_1,f}$, 
$t^{\Id\comp\alpha}_f=t^\alpha_f$, 
$$t^{\alpha\comp\Id}_f=
\bfig
\putmorphism(900,650)(1,0)[F(B)`F(B)  `F(id_B)]{520}1a
\putmorphism(890,650)(0,-1)[\phantom{Y_2}` `=]{380}1l
\putmorphism(1420,650)(0,-1)[\phantom{Y_2}` `=]{380}1r
\put(1060,470){\fbox{$F_B$}}

\putmorphism(-100,250)(1,0)[F(A)`F(A)`=]{550}1a
 \putmorphism(430,250)(1,0)[\phantom{F(A)}`F(B) `F(f)]{480}1a
 \putmorphism(900,250)(1,0)[\phantom{F(A)}`F(B) `=]{520}1a
\putmorphism(1940,250)(0,-1)[\phantom{Y_2}``=]{900}1r

 \putmorphism(-100,-200)(1,0)[F(A)`F(A)`=]{550}1a
 \putmorphism(450,-650)(1,0)[\phantom{F(A)}`G(B). `G(f)]{1500}1a

\putmorphism(-100,250)(0,-1)[\phantom{Y_2}``F(id_A)]{450}1l
\putmorphism(450,250)(0,-1)[\phantom{Y_2}``=]{450}1r

\put(70,10){\fbox{$(F^A)^{-1}$}}
\put(1070,-220){\fbox{$t^\alpha_f$}}

 \putmorphism(1400,250)(1,0)[\phantom{F(A)}`G(B) `\alpha_1(B)]{520}1a

\putmorphism(450,-210)(0,-1)[\phantom{Y_2}`G(A)`\alpha_0(A)]{450}1l

\efig\quad
$$
For the vertical components the compositions are analogous. The modification $l_\alpha: \Id\ot\alpha\Rrightarrow\alpha$ is just the identity, 
while the modification $r_\alpha: \alpha\ot\Id\Rrightarrow\alpha$ is given by 
$$
a_0(A)=
\bfig
 \putmorphism(-150,470)(1,0)[F(A)`F(A)  `=]{470}1a
\putmorphism(-160,470)(0,-1)[\phantom{Y_2}`F(A) `F(id_A)]{450}1l
\putmorphism(350,470)(0,-1)[\phantom{Y_2}`F(A) `=]{450}1r

\putmorphism(-160,50)(0,-1)[\phantom{Y_2}``\alpha_0(A)]{420}1l
\putmorphism(350,50)(0,-1)[\phantom{Y_2}`G(A)`\alpha_0(A)]{420}1r
\put(-60,220){\fbox{$(F^A)^{-1}$}}
\putmorphism(-170,20)(1,0)[\phantom{F(A)}``=]{430}1a
\putmorphism(-190,-350)(1,0)[G(A)` `=]{440}1b
\put(0,-190){\fbox{$\Id$ }}
\efig
\qquad\quad
a_1(A)=
\bfig
\putmorphism(-100,250)(1,0)[F(A)`F(A)`F(id_A)]{550}1a
 \putmorphism(430,250)(1,0)[\phantom{F(A)}`G(A) `\alpha_1(A)]{550}1a

 \putmorphism(-100,-200)(1,0)[F(A)`F(A)`=]{550}1b
 \putmorphism(450,-200)(1,0)[\phantom{F(A)}`G(A) `\alpha_1(A)]{550}1b

\putmorphism(-100,250)(0,-1)[\phantom{Y_2}``=]{450}1r
\putmorphism(450,250)(0,-1)[\phantom{Y_2}``]{450}1r
\putmorphism(300,250)(0,-1)[\phantom{Y_2}``=]{450}0r
\putmorphism(990,250)(0,-1)[\phantom{Y_2}``=]{450}1r
\put(80,20){\fbox{$ F_A$}}
\put(630,20){\fbox{$\Id$}}
\efig
$$
and it is clearly invertible.

\subsection{Interchange laws for 2-cells}  

Let us take four double pseudonatural transformations: 
$F\stackrel{\alpha}{\Rightarrow} G \stackrel{\gamma}{\Rightarrow} H$ horizontally composable with 
$F\s'\stackrel{\beta}{\Rightarrow} G' \stackrel{\delta}{\Rightarrow} H'$, so that 
the compositions in the following desired isomorphism make sense:
$$\xi: \bigg(\frac{(\alpha\vert\beta)}{(\gamma\vert\delta)}\bigg) \stackrel{\iso}{\to} 
\bigg(\big(\frac{\alpha}{\gamma}\big) \bigg\rvert \big(\frac{\beta}{\delta}\big)\bigg). $$
To verify that such an isomorphism exists, because of the symmetry it is sufficient to verify if it exists for the corresponding 
horizontal pseudonatural transformations 
$\alpha_1,\beta_1,\gamma_1,\delta_1$ and the corresponding 2-cells $t^\alpha, t^\beta, t^\gamma, t^\delta$. As in \ssref{assoc t's}, we will supress the 
index 1, and for a 1v-cell $u$ we may write the horizontal and vertical compositions as follows:
$$(\alpha\vert\beta)_u=[F\s'(\alpha_u)\vert\beta_{G(u)}] \hspace{1,4cm} \big(\frac{\alpha}{\gamma}\big)_u=[\alpha_u\vert\gamma_u].$$
Then one gets that $\bigg(\frac{(\alpha\vert\beta)}{(\gamma\vert\delta)}\bigg)_u=
[[F\s'(\alpha_u)\vert\beta_{G(u)}] \big\vert [G'(\gamma_u)\vert\delta_{H(u)}] ]$ while 
$\bigg(\big(\frac{\alpha}{\gamma}\big) \bigg\rvert \big(\frac{\beta}{\delta}\big)\bigg)_u$ equals the third row of compositions of 2-cells in:  
$$
\bfig 
 \putmorphism(160,950)(1,0)[` `\beta(G(A))]{620}1a  
 \putmorphism(760,950)(1,0)[` `G'(\gamma(A))]{550}1a  
\put(580,760){\fbox{$\delta_{\beta,\gamma(A)}^{-1}$ }}

 \putmorphism(890,560)(1,0)[` G'H(A)`\beta(H(A))]{600}1a  
\putmorphism(0,950)(0,-1)[F\s'G(A)``=]{400}1l  
 \putmorphism(1480,950)(0,-1)[G'H(A)``=]{380}1l

  \putmorphism(-700,560)(0,-1)[``=]{330}1r 
\putmorphism(-770,550)(0,-1)[F\s'F(A) ` `]{350}0l 
 \putmorphism(680,560)(0,-1)[``=]{360}1l
 \putmorphism(740,550)(0,-1)[F\s'H(A)`  `]{340}0r
\put(-160,400){\fbox{$(F''_\bullet)^{-1}$}}
 \putmorphism(-880,260)(1,0)[` `F\s']{750}0a
 \putmorphism(-800,210)(1,0)[` `\big(]{750}0a
 \putmorphism(-440,210)(1,0)[` `\alpha(A)]{470}1a
 \putmorphism(-150,210)(1,0)[\phantom{HF(A)}` `]{530}1a  
 \putmorphism(-150,210)(1,0)[\phantom{HF(A)}` `\gamma(A) ]{550}0a  
  \putmorphism(60,210)(1,0)[\phantom{HF(A)}` `\big) ]{550}0a  

\putmorphism(-570,550)(1,0)[` `F\s'(\alpha(A))]{580}1a
 \putmorphism(-160,550)(1,0)[\phantom{HF(A)}` `F\s'(\gamma(A))]{680}1a  
  \putmorphism(-700,220)(0,-1)[``F\s'F(u)]{400}1l
\putmorphism(-770,220)(0,-1)[F\s'F(A) ` `]{350}0l 

 \putmorphism(680,220)(0,-1)[``F\s'H(u)]{420}1l
 \putmorphism(740,220)(0,-1)[F\s'H(A)`  `]{340}0r
\put(-310,40){\fbox{$F\s'(\alpha_u\vert\gamma_u)$}}
 
 \putmorphism(890,-190)(1,0)[` G'H(A)`]{620}1a 
 \putmorphism(740,-170)(1,0)[` `\beta(H(A'))]{750}0a  
 \putmorphism(890,210)(1,0)[` G'H(A)`\beta(H(A))]{600}1a  
 \putmorphism(1480,220)(0,-1)[``]{400}1l
\put(980,50){\fbox{$\beta_{H(u)}$}}

  \putmorphism(1640,210)(1,0)[` H'H(A)`\delta(H(A))]{600}1a  
 \putmorphism(1660,-190)(1,0)[` H'H(A')`\delta(H(A'))]{620}1a 
\putmorphism(2180,220)(0,-1)[``]{400}1l
\put(1690,40){\fbox{$\delta_{H(u)}$}}


\putmorphism(-570,-550)(1,0)[` `F\s'(\alpha(A'))]{580}1a
 \putmorphism(-130,-550)(1,0)[\phantom{HF(A)}` `F\s'\gamma(A')]{660}1a  

  \putmorphism(-700,-180)(0,-1)[``=]{350}1r 
\putmorphism(-770,-180)(0,-1)[F\s'F(A') `F\s'F(A') `]{350}0l 
 \putmorphism(680,-180)(0,-1)[``=]{360}1l
 \putmorphism(740,-200)(0,-1)[F\s'H(A')`  `]{340}0r
\put(-200,-350){\fbox{$F\s'_\bullet$}}
 \putmorphism(-880,-140)(1,0)[` `F\s']{750}0a
 \putmorphism(-800,-190)(1,0)[` `\big(]{750}0a
 \putmorphism(-440,-190)(1,0)[` `\alpha(A')]{470}1a
 \putmorphism(-110,-190)(1,0)[\phantom{HF(A)}` `]{530}1a  
 \putmorphism(-110,-190)(1,0)[\phantom{HF(A)}` `\gamma(A') ]{550}0a  
  \putmorphism(110,-190)(1,0)[\phantom{HF(A)}` `\big) ]{550}0a  
 \putmorphism(740,-550)(1,0)[F\s'H(A')` `]{750}0a  

 \putmorphism(890,-550)(1,0)[` G'H(A)`\beta(H(A))]{600}1a  
  \putmorphism(0,-530)(0,-1)[`F\s'G(A')`=]{400}1l  
 \putmorphism(1480,-540)(0,-1)[`G'H(A).`=]{400}1l
 \putmorphism(160,-940)(1,0)[` `\beta(G(A'))]{620}1a  
 \putmorphism(760,-940)(1,0)[` `G'(\gamma(A'))]{550}1a  
\put(580,-730){\fbox{$\delta_{\beta,\gamma(A')}$}}

\efig
$$
(Observe that $\delta_{\beta,\gamma(A)}^{-1}$ at the top of the diagram is invertible by (T2) of \deref{double 2-cells}.) 
Applying first (6.2) of \cite[Definition 6.1]{Shul1} on the middle left and then pseudonaturality of $\beta$ with respect to $\gamma_u$ (axiom 1) of 
\deref{hor psnat tr}) one gets that the above diagram equals $\bigg(\frac{(\alpha\vert\beta)}{(\gamma\vert\delta)}\bigg)_u$. 
It follows that $\bigg(\big(\frac{\alpha}{\gamma}\big) \bigg\rvert \big(\frac{\beta}{\delta}\big)\bigg)_u$ 
is the image of the former by the modification obvious from the above diagram. This proves that the interchange law for horizontal (and vertical) pseudonatural transfromations holds up to isomorphism. 

\bigskip

Let us now comment on the other desired isomorphism:  
$$\zeta: \bigg(\frac{(t^\alpha\vert t^\beta)}{(t^\gamma\vert t^\delta)}\bigg)_f \stackrel{\iso}{\to}
\bigg(\big(\frac{t^\alpha}{t^\gamma}\big) \bigg\rvert \big(\frac{t^\beta}{t^\delta}\big)\bigg)_f$$
for a 1h-cell $f$. Writing out the diagrams for the two sides in the above identity is the other most complex part in the check, together with 
that of the horizontal associativity of 2-cells, as we mentioned earlier. 
One starts by writing out the diagram for the right hand-side above and then uses the following: axiom (v) of \cite[Definition 6.1]{Shul1} 
for $H'_\bullet$; axiom 2. of the vertical analogon of \deref{hor psnat tr} for $(\delta_0)_{\alpha_1(B)\comp F(f)}$; 
axiom 1. of the same analogon for $(\delta_0)_{\alpha_1(B)\comp F(f)}$ and $H'(t^\alpha_f)$; axiom 1. for $t^\delta_{\gamma_1(B)}$ and 
$H'(\Id_{\gamma_1(B)})$; axiom 2. of \deref{hor psnat tr} for $(\delta_1)_{id_{H(B)}}$; axiom (6.3) of \cite[Definition 6.1]{Shul1} 
for $G'(\Id_{\gamma_1(B)})$; and finally axiom 2. for $t^\beta_{\gamma_1(B)}$ and $(\beta_0)_{\alpha_1(B)}$. This way one gets an  
identity as \equref{modif t} in which $\beta$ becomes $\frac{\beta\comp\alpha}{\delta\comp\gamma}$, and $\alpha$ becomes 
$\big(\frac{\beta}{\delta}\big) \comp \big(\frac{\alpha}{\gamma}\big)$ in this context, and the modification between them is given via 
$$a_0(A)=
\bfig
 \putmorphism(-50,500)(1,0)[`F\s'F(A) `=]{460}1a
\putmorphism(-280,460)(0,-1)[F\s'\big(` `]{450}0l
\putmorphism(-130,500)(0,-1)[\phantom{Y_2}`G(A) `\alpha_0(A)]{450}1l
\put(0,250){\fbox{$(F\s'^\bullet)^{-1}$}}
\putmorphism(-50,-400)(1,0)[\big)`F\s'H(A) `=]{460}1a
\putmorphism(-130,50)(0,-1)[\phantom{Y_2}``\gamma_0(A)]{450}1l
\putmorphism(380,500)(0,-1)[\phantom{Y_2}` `F\s'(\alpha_0(A))]{450}1r
\putmorphism(380,50)(0,-1)[F\s'G(A)` `F\s'(\gamma_0(A))]{450}1l
\putmorphism(530,60)(1,0)[ `F\s'G(A)`=]{360}1a
\putmorphism(430,-850)(1,0)[\phantom{G(A)}`G'H(A)`=]{500}1a
\putmorphism(920,50)(0,-1)[\phantom{(B, \tilde A')}``\beta_0(G(A))]{450}1r
\putmorphism(920,-400)(0,-1)[G'G(A)`` G'(\gamma_0(A))]{450}1r
\putmorphism(380,-400)(0,-1)[\phantom{(B, \tilde A)}`G'H(A) `\beta_0(H(A))]{450}1l
\put(500,-190){\fbox{$\delta_{\beta_0,\gamma_0(A)}$}}
\efig
$$
and 
$$a_1(B)=
\bfig
 \putmorphism(-200,200)(1,0)[F\s'\big(`G(B)` \alpha_1(B)]{650}1a
 \putmorphism(460,200)(1,0)[\phantom{F(B)}` \big) `\gamma_1(B)]{600}1a

 \putmorphism(-250,-200)(1,0)[F\s'F(B)`F\s'G(B)`F\s'(\alpha_1(B))]{680}1a
 \putmorphism(440,-200)(1,0)[\phantom{A\ot B}`F\s'H(B) `F\s'(\gamma_1(B))]{700}1a
 \putmorphism(1130,-200)(1,0)[\phantom{A'\ot B'}` H(B) `\beta_1(H(B))]{700}1a

\putmorphism(-230,200)(0,-1)[` `=]{400}1r
\putmorphism(1070,200)(0,-1)[\phantom{Y_2}``=]{400}1r
\put(300,-10){\fbox{$ F\s'_\bullet$  }}
\put(980,-400){\fbox{$\delta_{\beta_1,\gamma_1(B)}$}}

 \putmorphism(450,-600)(1,0)[F\s'G(B)` G'G(B) `\beta_1(G(B))]{700}1a
 \putmorphism(1110,-600)(1,0)[\phantom{A''\ot B'}`G'H(B). ` G'(\gamma_1(B))]{760}1a

\putmorphism(450,-200)(0,-1)[\phantom{Y_2}``=]{400}1l
\putmorphism(1800,-200)(0,-1)[\phantom{Y_2}``=]{400}1r
\efig
$$
This means that the above modification maps 
$t^{(\frac{\alpha}{\gamma}) \rvert (\frac{\beta}{\delta})}
\Rrightarrow
t^{\frac{(\alpha\vert\beta)}{(\gamma\vert\delta)}} $, thus the desired isomorphism $\zeta$ from above is the inverse of the named modification. 
This is in accordance with the modification that we found upper above for the interchange law of horizontal (and vertical) pseudonatural transfromations. 
Thus we may conclude that indeed the interchange law for double pseudonatural transformations holds up to an isomorphism.

\begin{rem}
For the subclass of $\Theta$-double pseudonatural transformations, a part from the fact that the interchange law for them follows by what we have just proved, 
one also checks the interchange law for the 2-cells $\Theta^\alpha_A$ directly, 
then an interchange law for $t$'s comming from $\Theta$'s follows shortly by interchange laws of $\alpha_0$'s and those of $\Theta$'s: 
$$ t^{(\frac{\alpha}{\gamma}) \rvert (\frac{\beta}{\delta})}_f =
\big[\big(\big((\frac{\alpha}{\gamma}) \rvert (\frac{\beta}{\delta})\big)_0\big)_f \big\rvert 
\Theta^{(\frac{\alpha}{\gamma}) \rvert (\frac{\beta}{\delta})} \big] 
\iso 
\big[\big(\big(\frac{(\alpha\vert\beta)}{(\gamma\vert\delta)}\big)_0\big)_f \big\rvert
\Theta^{ \frac{(\alpha\vert\beta)}{(\gamma\vert\delta)}} \big]=
t^{\frac{(\alpha\vert\beta)}{(\gamma\vert\delta)}}_f$$
(here we said shortly `interchange law' for `interchange law up to isomorphism' for brevity). 
\end{rem}

\subsection{Axiom (TD8) for $\DblPs$}

Given two composable 1-cells $\Aa\stackrel{F}{\to}\Bb\stackrel{G}{\to}\Cc$ of $\DblPs$ we set for all the three 3-cells 
$\mu_{G,F},\lambda_{G,F}$ and $\rho_{G,F}$ from (TD8) from \seref{tricat} to be identities (observe that all the 2-cells 
involved in their (co)domains are identities). In this case the three identities from (TD8) come down to: 
$$\big( \beta\comp(\Id\comp\alpha) \stackrel{a_{\beta,\Id,\alpha}}{\Rrightarrow}
(\beta\comp\Id)\comp\alpha \stackrel{r_\beta\ot\Id_\alpha}{\Rrightarrow} \beta\comp\alpha
\big) \quad=\quad 
\big(\beta\comp(\Id\comp\alpha) \stackrel{\Id_\beta\ot l_\alpha}{\Rrightarrow} \beta\comp\alpha\big) $$

$$\big( \Id\comp(\beta\comp\alpha) \stackrel{a_{\Id,\beta,\alpha}}{\Rrightarrow}
(\Id\comp\beta)\comp\alpha \stackrel{l_\beta\ot\Id_\alpha}{\Rrightarrow} \beta\comp\alpha
\big) \quad=\quad 
\big(\Id\comp(\beta\comp\alpha) \stackrel{l_{\beta\comp\alpha}}{\Rrightarrow} \beta\comp\alpha\big) $$
and
$$\big( \beta\comp(\alpha\comp\Id) \stackrel{a_{\beta,\alpha,\Id}}{\Rrightarrow}
(\beta\comp\alpha)\comp\Id \stackrel{r_{\beta\comp\alpha}}{\Rrightarrow} \beta\comp\alpha
\big) \quad=\quad 
\big(\beta\comp(\alpha\comp\Id) \stackrel{\Id_\beta\ot r_\alpha}{\Rrightarrow} \beta\comp\alpha\big). $$
The middle identity is quickly checked: from \ssref{unities} we know that $l_\beta$ and $l_{\beta\comp\alpha}$ are 
identities. On the other hand, $a_{\Id,\beta,\alpha}$ is indeed identity: it is the modification in \equref{assoc modif}
where $\gamma=\Id$ and $F\s''=\id$. 
For the first identity above, 
as we did at the end of \ssref{horiz ass modif}, start with the image 2-cell $\beta\comp\alpha$, 
compute the action of $r_\beta^{-1}$ on $\beta$ (see \ssref{unities}), then compose the result horizontally 
with the 2-cell $\alpha$ (recall \leref{horiz comp hor.ps.tr.}), and then act by $a_{\beta,\Id,\alpha}^{-1}$ (use \equref{assoc modif}). 
Compute the action of $\Id_\beta\ot l_\alpha$ on $\beta\comp\alpha$ in the analogous way, and one has the identity. 
The third identity above is similarly checked.

\bigskip
We have constructed our tricategory $\DblPs$ so that it is weaker from a 3-category in that the horizontal associativity and 
right unitality on 2-cells and the interchange law work up to an isomorphism. In other words, it is weaker from a Gray-category in that the 
horizontal associativity of 2-cells is not strict.




\section{The 2-category $PsDbl$ embeds into our tricategory $\DblPs$}

As we announced at the end of \ssref{beyond}, we want to propose an alternative notion to intercategories so that monoids in $(Dbl, \ot)$, 
which is the same as monoids in the Cartesian monoidal category $(Dbl, \oast)$, fit in it. Our idea is to propose it as a {\em category 
internal to the tricategory $\DblPs$}, a notion that we will introduce in a subsequent paper. 

As we explained, we can not part from the 
2-category $LxDbl$ (whose 1-cells are lax double functors) and embed it into $\DblPs$, instead we will embed the 2-category $PsDbl$ 
of pseudo double categories, pseudo double functors and vertical transformations, used in \cite{Shul}. Observe that a part from 1-cells, 
it differs from $LxDbl$ in that the horizontal direction is weak and the vertical one is strict, as for us, while in the approach 
of Grandis and Par\'e and in $LxDbl$ it is the other way around. Moreover, 2-cells in $PsDbl$ are vertical rather than horizontal transformations, 
as in $LxDbl$. Thus the 2-category $PsDbl$ is the closest one to $LxDbl$ that is 
suitable to our purpose: to embed it into our tricategory $\DblPs$ in order to define alternative intercategories later on. 



The 0-cells of $PsDbl$ are pseudo double categories and not {\em strict} double categories as in $\DblPs$. Though, by Strictification 
Theorem of \cite[Section 7.5]{GP:Limits} every pseudo double category is equivalent by a pseudodouble functor 
to a strict double category. Let $PsDbl^*_3$ be the 3-category defined by adding only the identity 3-cells to the 2-category equivalent 
to $PsDbl$ having strict double categories for 0-cells. Thus $PsDbl^*_3$ consists of strict double categories, pseudo double functors, 
vertical transformations and identity modifications among the latter. Pseudo double functors are in particular double pseudo functors, 
so the only thing it remains to check is how to make a vertical transformation a double pseudonatural transformation, that is, embed 2-cells 
of $PsDbl$ into those of $\DblPs$. 

Before doing this, we prove some more general results. 



\subsection{Bijectivity between strong vertical and strong horizontal transformations}

Recall that a {\em companion} for a 1v-cell $u:A\to A'$ is a 1h-cell  $u_*:A\to A'$ together with certain 2-cells $\Epsilon$ and $\eta$ satisfying 
$[\eta\vert\Epsilon]=\Id_{u_*}$ and $\frac{\eta}{\Epsilon}=\Id_{u}$, 
\cite[Section 1.2]{GP:Adj}, \cite[Section 3]{Shul}. 
We will say that $u_*$ is a {\em 1h-companion} of $u$. 
Companions are unique up to a unique globular isomorphism \cite[Lemma 3.8]{Shul} and a {\em connection} on a double category is 
a functorial choice of a companion for each 1v-cell, \cite{BS}. We will need a functorial choice of companions only for 1v-cell 
components of vertical pseudonatural transformations, accordingly we will speak about a {\em connection on those 1v-cells}. 

\begin{prop} \prlabel{vertic->horiz}
Let $\alpha_0:F\Rightarrow G$ be a strong vertical transformation between pseudo double functors acting between 
strict double categories $\Aa\to\Bb$ (\cite[Section 7.4]{GP:Limits}). 
The following data define a horizontal pseudonatural transformation $\alpha_1:F\Rightarrow G$:
\begin{itemize}
\item a fixed choice of a 1h-companion of $\alpha_0(A)$, for every 0-cell $A$ of $\Aa$ (with corresponding 2-cells 
$\Epsilon^\alpha_A$ and $\eta^\alpha_A$), 
we denote it by $\alpha_1(A)$; 
\item the 2-cell 
$$(\alpha_1)_u=\quad
\bfig
 \putmorphism(-40,50)(1,0)[``=]{320}1b

\put(550,30){\fbox{$\delta_{\alpha_0,u}$}}
\putmorphism(-150,-400)(1,0)[F(A')`G(A') `\alpha_1(A')]{520}1a
\putmorphism(-150,50)(0,-1)[F(A')``=]{450}1l
\putmorphism(380,500)(0,-1)[\phantom{Y_2}` `F(u)]{450}1l
\putmorphism(950,500)(0,-1)[\phantom{Y_2}`G(A) `\alpha_0(A)]{450}1r
\putmorphism(380,50)(0,-1)[F(A')` `\alpha_0(A')]{450}1r
\putmorphism(350,500)(1,0)[F(A)`F(A)`=]{600}1a
 \putmorphism(950,500)(1,0)[\phantom{F(A)}`G(A) `\alpha_1(A)]{650}1a
 \putmorphism(1060,50)(1,0)[`G(A)`=]{500}1b

\putmorphism(1570,500)(0,-1)[\phantom{Y_2}``=]{450}1r
\put(1250,260){\fbox{$\Epsilon^\alpha_A$}}
\putmorphism(480,-400)(1,0)[`G(A') `=]{500}1a
\putmorphism(950,50)(0,-1)[\phantom{Y_2}``G(u)]{450}1r
\put(0,-160){\fbox{$\eta^\alpha_{A'}$}}
\efig
$$
for every 1v-cell $u:A\to A'$; 
\item the 2-cell 
$$\delta_{\alpha_1,f}=\quad
\bfig
 \putmorphism(-150,250)(1,0)[F(A)`F(A) `=]{500}1a
\put(-100,30){\fbox{$\eta^\alpha_A$}}
\putmorphism(380,250)(0,-1)[\phantom{Y_2}` `\alpha_0(A)]{450}1l
\putmorphism(950,250)(0,-1)[\phantom{Y_2}` `\alpha_0(B)]{450}1r
\putmorphism(350,250)(1,0)[F(A)`F(B)`F(f)]{600}1a
 \putmorphism(950,250)(1,0)[\phantom{F(A)}`G(B) `\alpha_1(B)]{600}1a
 \putmorphism(470,-200)(1,0)[`G(B)`G(f)]{500}1b
 \putmorphism(1060,-200)(1,0)[`G(B)`=]{500}1b
\putmorphism(1570,250)(0,-1)[\phantom{Y_2}``=]{450}1r
\put(550,10){\fbox{$(\alpha_0)_f$}}
\put(1250,30){\fbox{$\Epsilon^\alpha_B$}}
\putmorphism(-150,-200)(1,0)[F(A)`G(A) `\alpha_1(A)]{520}1a
\putmorphism(-150,250)(0,-1)[``=]{450}1l
\efig
$$
for every 1h-cell $f:A\to B$. 
\end{itemize}
\end{prop}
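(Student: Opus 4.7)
\medskip

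\noindent\emph{Proof plan.} The statement reduces to verifying the three families of axioms of \deref{hor psnat tr} for the candidate $\alpha_1$. The construction packages every component of $\alpha_1$ as a ``sandwich'' of a 2-cell coming from $\alpha_0$ between the companion unit $\eta^\alpha$ and counit $\Epsilon^\alpha$. The whole proof is therefore governed by three tools: the companion triangle identities $[\eta^\alpha_A\vert\Epsilon^\alpha_A]=\Id_{\alpha_1(A)}$ and $\frac{\eta^\alpha_A}{\Epsilon^\alpha_A}=\Id^{\alpha_0(A)}$, the two axioms of the strong vertical transformation $\alpha_0$ (pseudonaturality of $(\alpha_0)_f$ and $\delta_{\alpha_0,u}$, plus vertical/horizontal functoriality for these), and the pseudofunctor axioms for $F$ and $G$.

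\medskip

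First, I would check vertical functoriality of $(\alpha_1)_u$ (axiom 2 of \deref{hor psnat tr}). For composable 1v-cells $u,v$, expand both sides by the defining sandwich formula for $(\alpha_1)_u$ and $(\alpha_1)_v$. The middle pair of cells $\frac{\eta^\alpha_{A'}}{\Epsilon^\alpha_{A'}}$ that appears between the two sandwiches collapses to $\Id^{\alpha_0(A')}$ by the companion triangle identity, after which what remains is exactly vertical functoriality of $\delta_{\alpha_0,\bullet}$ for $\alpha_0$. The unit case is analogous and uses $F^A$, $G^A$ and the normalization $\delta_{\alpha_0,\id^A}=\Id$ inherited from $\alpha_0$.

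Next, I would check pseudonaturality (axiom 1 of \deref{hor psnat tr}): for a 2-cell $a$ with boundaries $f,g,u,v$, paste $\delta_{\alpha_1,g}$ below $F(a)$ on one side and $(\alpha_1)_u$ above $G(a)$ with $\delta_{\alpha_1,f}$ on the other. Expanding $\delta_{\alpha_1,\bullet}$ and $(\alpha_1)_\bullet$ via the sandwich formulas, the inner $\Epsilon^\alpha_B$-then-$\eta^\alpha_B$ pair annihilates by $[\eta^\alpha_B\vert\Epsilon^\alpha_B]=\Id_{\alpha_1(B)}$, and what remains is the pseudonaturality identity of $\alpha_0$ applied to the 2-cell $a$. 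The same strategy, reading the diagram vertically rather than horizontally, handles the second pseudonaturality identity.

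Finally, for horizontal functoriality of $\delta_{\alpha_1,-}$ (axiom 3), which I expect to be the main obstacle: given composable 1h-cells $f,g$, I would paste the sandwich expressions for $\delta_{\alpha_1,f}$ and $\delta_{\alpha_1,g}$ side-by-side and show they combine into $\delta_{\alpha_1,gf}$. The interior produces a $\Epsilon^\alpha_B$ followed by $\eta^\alpha_B$ that cancels by the companion identity, leaving a composite of $(\alpha_0)_f$ and $(\alpha_0)_g$; horizontal functoriality of $(\alpha_0)_\bullet$ then rewrites this as $(\alpha_0)_{gf}$ modulo the pseudofunctor comparison 2-cells $F_{gf}$ and $G_{gf}^{-1}$. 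Re-inserting these comparisons into the sandwich produces precisely the diagram defining $\delta_{\alpha_1,gf}$. The unit axiom follows the same pattern, where the companion triangle $\frac{\eta^\alpha_A}{\Epsilon^\alpha_A}=\Id^{\alpha_0(A)}$ together with the normalizations $(F_A)^{-1}$ and $G_A$ collapses the sandwich of $(\alpha_0)_{\id_A}=\Id^{\alpha_0(A)}$ to $\Id_{\alpha_1(A)}$. The hard part is book-keeping the pseudofunctor comparison 2-cells of $F$ and $G$ across the composition, which is where axioms (6.2)--(6.3) of \cite[Definition~6.1]{Shul1} must be invoked carefully; everything else is a straightforward application of the companion triangles.
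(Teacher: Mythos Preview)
Your proposal is correct and follows exactly the paper's strategy: reduce each axiom for $\alpha_1$ to the corresponding axiom for $\alpha_0$ by collapsing companion unit/counit pairs via the triangle identities $[\eta\vert\Epsilon]=\Id$ and $\frac{\eta}{\Epsilon}=\Id$. The only divergence is one of emphasis: you flag horizontal functoriality (axiom~3 for composable $f,g$) as the main obstacle requiring the pseudofunctor coherences (6.2)--(6.3) of \cite{Shul1}, whereas the paper records it as working directly --- the compositor cells $F_{gf}$ and $G_{gf}^{-1}$ already sit inside both the target identity for $\delta_{\alpha_1,gf}$ and the horizontal-functoriality axiom of $(\alpha_0)_\bullet$, so no extra coherence beyond the companion rules and the axioms of $\alpha_0$ is actually needed.
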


\begin{proof}
To prove axiom 1), the axiom 1) of $\alpha_0$ is used; 
the first part of the axiom 2) works directly, and in the second one use second part of the axiom 3) for $\alpha_0$; 
the first part of the axiom 3) works directly, and in the second one use second part of the axiom 2) for $\alpha_0$; 
in checking of all the three axioms also the rules $\Epsilon\x\eta$ are used. 
\end{proof}

\medskip

Observe that there is a way in the other direction:

\begin{prop} \prlabel{horiz->vertic}
Let $\alpha_1:F\Rightarrow G$ be a strong horizontal transformation between pseudo double functors acting between 
strict double categories $\Aa\to\Bb$. Suppose that for every 0h-cell $A$ the 1h-cell $\alpha_1(A)$ is a 1h-companion 
of some 1v-cell (with corresponding 2-cells $\Epsilon^\alpha_A$ and $\eta^\alpha_A$). 
Fix a choice of such 1v-cells for each $A$ and denote them by $\alpha_0(A)$. 
The following data define a vertical pseudonatural transformation $\alpha_0:F\Rightarrow G$:
\begin{itemize}
\item the 1v-cell $\alpha_0(A)$, for every 0-cell $A$ of $\Aa$;
\item the 2-cell 
$$(\alpha_0)_f=\quad
\bfig
\putmorphism(-150,0)(1,0)[F(A)`F(B)`F(f)]{600}1a
 \putmorphism(450,0)(1,0)[\phantom{F(A)}`G(B) `\alpha_1(B)]{680}1a
 \putmorphism(-150,-450)(1,0)[F(A)`G(A)`\alpha_1(A)]{600}1a
 \putmorphism(450,-450)(1,0)[\phantom{F(A)}`G(B) `G(f)]{680}1a

\putmorphism(-180,0)(0,-1)[\phantom{Y_2}``=]{450}1r
\putmorphism(1100,0)(0,-1)[\phantom{Y_2}``=]{450}1r
\put(350,-240){\fbox{$\delta_{\alpha_1,f}$}}
\putmorphism(450,450)(1,0)[F(B)`F(B) `=]{640}1a

\putmorphism(450,450)(0,-1)[\phantom{Y_2}``=]{450}1l
\putmorphism(1100,450)(0,-1)[\phantom{Y_2}``\alpha_0(B)]{450}1r
\put(680,250){\fbox{$\eta^\alpha_B$}} 

\putmorphism(-150,-450)(0,-1)[\phantom{Y_2}``\alpha_0(A)]{450}1l
\putmorphism(450,-450)(0,-1)[\phantom{Y_2}``=]{450}1l 
 \putmorphism(-150,-900)(1,0)[G(A)`G(A) `=]{580}1a
\put(20,-700){\fbox{$\Epsilon^\alpha_A$}} 

\efig
$$
for every 1h-cell $f:A\to B$; 
\item the 2-cell 
$$\delta_{\alpha_0,u}=
\bfig
 \putmorphism(-150,410)(1,0)[F(A)`F(A)  `=]{530}1a
\putmorphism(-160,400)(0,-1)[\phantom{Y_2}`F(A) `=]{380}1l
\putmorphism(370,400)(0,-1)[\phantom{Y_2}`G(A) `\alpha_0(A)]{380}1r
\putmorphism(-160,50)(0,-1)[\phantom{Y_2}`F(A')`F(u)]{430}1l
\putmorphism(370,50)(0,-1)[\phantom{Y_2}`G(A')`G(u)]{430}1r
\put(-20,240){\fbox{$\eta^\alpha_A$}}
\putmorphism(-70,20)(1,0)[``\alpha_1(A)]{330}1a
\put(-60,-160){\fbox{$(\alpha_1)_u$ }}
\putmorphism(-160,-350)(0,-1)[\phantom{Y_2}``\alpha_0(A')]{400}1l
\putmorphism(-70,-350)(1,0)[``\alpha_1(A')]{330}1a
\putmorphism(370,-350)(0,-1)[\phantom{Y_2}`G(A')`=]{400}1r
\putmorphism(-180,-750)(1,0)[G(A')` `=]{440}1b
\put(-20,-560){\fbox{$\Epsilon^\alpha_{A'}$ }}
\efig
$$
for every 1v-cell $u:A\to A'$. 
\end{itemize}
\end{prop}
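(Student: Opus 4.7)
The plan is to verify the three axioms defining a vertical pseudonatural transformation—the formal duals of those listed in \deref{hor psnat tr} for a horizontal one, obtained by interchanging the r\^oles of horizontal and vertical throughout—for the prescribed data $(\alpha_0(A), (\alpha_0)_f, \delta_{\alpha_0,u})$. The principal tool is the pair of companion identities $[\eta^\alpha_A\vert\Epsilon^\alpha_A]=\Id_{\alpha_1(A)}$ and $\frac{\eta^\alpha_A}{\Epsilon^\alpha_A}=\Id_{\alpha_0(A)}$, which allow the $\eta$--$\Epsilon$ frames surrounding each prescribed 2-cell to collapse to identities whenever needed, so that an axiom for $\alpha_1$ acting on the middle block can be transported to an axiom for $\alpha_0$ acting on the entire pasting. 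The proof thus runs exactly dually to that of \prref{vertic->horiz}.

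First I would check the pseudonaturality axiom for an arbitrary 2-cell $a$ in $\Aa$. Expanding $(\alpha_0)_f, (\alpha_0)_g, \delta_{\alpha_0,u}, \delta_{\alpha_0,v}$ by their definitions, both sides of the required equation become pastings whose middle blocks are built out of $\delta_{\alpha_1,f}, \delta_{\alpha_1,g}, (\alpha_1)_u, (\alpha_1)_v$, and whose outer frames are compositions of $\eta^\alpha_\bullet$ on top and $\Epsilon^\alpha_\bullet$ on the bottom. Since the two outer frames match identically on the two sides, the required equality reduces to pseudonaturality of $\alpha_1$ with respect to $a$, i.e.\ axiom 1 of \deref{hor psnat tr} applied to $\alpha_1$.

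Next I would verify the two functoriality axioms. The horizontal-functoriality axiom for $(\alpha_0)_{gf}$ follows directly from the corresponding formula for $\delta_{\alpha_1,gf}$ (axiom 3 of \deref{hor psnat tr}, first part, applied to $\alpha_1$): substituting that formula into the definition of $(\alpha_0)_{gf}$ and cancelling one intermediate $\eta^\alpha_B$--$\Epsilon^\alpha_B$ pair by $\frac{\eta^\alpha_B}{\Epsilon^\alpha_B}=\Id_{\alpha_0(B)}$ produces precisely $(\alpha_0)_g$ pasted vertically on top of $(\alpha_0)_f$. The unit case $(\alpha_0)_{id_A}=\Id^{\alpha_0(A)}$ uses the second (unit) part of axiom 3 of $\alpha_1$ together with both companion identities. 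Symmetrically, the composition case of the vertical-functoriality axiom for $\delta_{\alpha_0,vu}$ follows from the first part of axiom 2 of $\alpha_1$ (the formula for $(\alpha_1)_{vu}$), while the unit case $\delta_{\alpha_0,id^A}$ uses the second part of axiom 2 of $\alpha_1$ together with the companion identities.

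The main obstacle is diagrammatic bookkeeping rather than any additional conceptual ingredient: each prescribed 2-cell of $\alpha_0$ is a pasting in which an $\alpha_1$-datum sits between an $\eta^\alpha$-frame on top and an $\Epsilon^\alpha$-frame on the bottom, and invoking an $\alpha_1$-axiom inside such a pasting requires tracking which of the surrounding $\eta$--$\Epsilon$ pairs collapse to identities via $[\eta\vert\Epsilon]=\Id$ and which collapse via $\frac{\eta}{\Epsilon}=\Id$. No hypothesis beyond the axioms of $\alpha_1$ and the existence of the fixed 1h-companion choice is needed, so the result is the honest converse of \prref{vertic->horiz} in the presence of a connection on the relevant 1v-cells.
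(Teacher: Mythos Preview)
Your proposal is correct and follows the same approach that the paper uses for the analogous \prref{vertic->horiz}: each axiom of $\alpha_0$ is reduced, via the $\Epsilon\text{--}\eta$ relations, to the corresponding axiom of $\alpha_1$, with the expected swap between the functoriality axioms (your axiom~2 for $\alpha_0$ uses axiom~3 of $\alpha_1$, and vice versa). The paper in fact gives no separate proof of this proposition, treating it as the evident dual of \prref{vertic->horiz}; your write-up is already more explicit than what the paper provides.
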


By $\Epsilon\x\eta$-relations, there is a 1-1 correspondence between those strong vertical transformations whose 1v-cell components 
have 1h-companions 
and those strong horizontal transformations 
whose 1h-cell components are 1h-companions of some 1v-cells. 

\begin{cor} \colabel{bij}
Suppose that there is a connection on 1v-components of strong vertical transformations. Then there is a bijection between 
strong vertical transformations and those strong horizontal transformations whose 1h-cell components are 1h-companions of some 1v-cells.
\end{cor}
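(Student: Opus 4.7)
The plan is to use Propositions \prref{vertic->horiz} and \prref{horiz->vertic} as mutually inverse assignments. The connection on 1v-components of strong vertical transformations provides, for every $\alpha_0$ and every object $A$, a distinguished 1h-companion $\alpha_1(A)$ together with the companion 2-cells $\eta^\alpha_A$ and $\Epsilon^\alpha_A$ satisfying
$$[\eta^\alpha_A\mid\Epsilon^\alpha_A]=\Id_{\alpha_1(A)} \quad\text{and}\quad \tfrac{\eta^\alpha_A}{\Epsilon^\alpha_A}=\Id_{\alpha_0(A)}.$$
This makes the construction of \prref{vertic->horiz} canonical, so we obtain a well-defined map $\Phi$ from strong vertical transformations to the subclass of strong horizontal transformations whose 1h-cell components are 1h-companions of some 1v-cell. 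Conversely, such an $\alpha_1$ determines, via the connection applied to the 1v-cell whose companion $\alpha_1(A)$ is, a canonical choice $\alpha_0(A)$ together with its $\eta^\alpha_A,\Epsilon^\alpha_A$, and the formulas of \prref{horiz->vertic} then define $\Psi(\alpha_1)$.

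First I would verify that $\Psi$ and $\Phi$ are inverse on 1-cell components: this is the statement that the connection-chosen companion of $\alpha_0(A)$ recovers $\alpha_0(A)$, and dually, which follows because companions are unique up to unique globular isomorphism (\cite[Lemma 3.8]{Shul}) and the connection fixes the choice functorially. The main step is then to verify that the 2-cell constructions in the two propositions invert one another. Concretely, substituting the formula for $(\alpha_1)_u$ from \prref{vertic->horiz} into the formula for $\delta_{\alpha_0,u}$ in \prref{horiz->vertic} produces a pasting of the intended output $\delta_{\alpha_0,u}$ flanked by $\eta^\alpha_A$ on one edge and $\Epsilon^\alpha_{A'}$ on the other; applying the companion equation $\frac{\eta^\alpha}{\Epsilon^\alpha}=\Id$ twice collapses the diagram back to $\delta_{\alpha_0,u}$. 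The analogous computation recovering $(\alpha_0)_f$ from $\delta_{\alpha_1,f}$, and the dual checks $\Phi\Psi=\Id$ on $(\alpha_1)_u$ and $\delta_{\alpha_1,f}$, are carried out using the other companion identity $[\eta^\alpha\mid\Epsilon^\alpha]=\Id$.

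The main obstacle will be the pictorial bookkeeping rather than any deep categorical fact: the pastings involved in $\Psi\Phi$ and $\Phi\Psi$ look intricate, but each reduction to identity requires only a single application of one of the two $\Epsilon\x\eta$ equations at the appropriate edge. No further axiom of pseudo double functors or of strong transformations intervenes, because the shapes of the formulas in \prref{vertic->horiz} and \prref{horiz->vertic} are precisely matched by the companion relations. The connection hypothesis is used only to make the choice of companion (respectively, of the 1v-cell admitting a given 1h-companion) canonical, so that $\Phi$ and $\Psi$ are genuine maps rather than defined only up to unique isomorphism. The bijection is therefore established.
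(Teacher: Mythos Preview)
Your proposal is correct and follows the paper's approach: the corollary is stated immediately after the sentence ``By $\Epsilon\x\eta$-relations, there is a 1-1 correspondence\ldots'', and your argument unpacks exactly this---use \prref{vertic->horiz} and \prref{horiz->vertic} as mutually inverse constructions, with the companion identities $[\eta\vert\Epsilon]=\Id$ and $\frac{\eta}{\Epsilon}=\Id$ supplying the round-trip cancellations on the 2-cell data.

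One small imprecision worth flagging: your description of $\Psi$ invokes ``the connection applied to the 1v-cell whose companion $\alpha_1(A)$ is'', but a connection runs the other way (1v-cell $\mapsto$ chosen companion) and does not directly furnish the inverse assignment. In the paper the connection hypothesis serves only to ensure that \emph{every} strong vertical transformation has companions for its 1v-components, so that the domain of $\Phi$ is all strong vertical transformations; the bijection between the two subclasses is already claimed in the sentence preceding the corollary, resting purely on the $\Epsilon\x\eta$-relations. This does not affect the substance of your argument.
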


As a direct corollary of \prref{vertic->horiz} we get:

\begin{cor} \colabel{4 identities e-eta}
Suppose that the 1v-components of a strong vertical transformation $\alpha_0:F\Rightarrow G$ have 1h-companions $\alpha_1(A)$, 
for every 0-cell $A$ (with corresponding 2-cells $\Epsilon^\alpha_A$ and $\eta^\alpha_A$), and define the 2-cells 
$(\alpha_1)_u$ and $\delta_{\alpha_1,f}$ as in \prref{vertic->horiz}. The following identities then follow: 
$$
\bfig
\putmorphism(-150,500)(1,0)[F(A)`F(B)`F(f)]{600}1a
 \putmorphism(450,500)(1,0)[\phantom{F(A)}`G(B) `\alpha_1(B)]{640}1a

 \putmorphism(-150,50)(1,0)[G(A)`G(B)`G(f)]{600}1a
 \putmorphism(450,50)(1,0)[\phantom{F(A)}`G(B) `=]{640}1a

\putmorphism(-180,500)(0,-1)[\phantom{Y_2}``\alpha_0(A)]{450}1l
\putmorphism(450,500)(0,-1)[\phantom{Y_2}``]{450}1r
\putmorphism(300,500)(0,-1)[\phantom{Y_2}``\alpha_0(B)]{450}0r
\putmorphism(1100,500)(0,-1)[\phantom{Y_2}``=]{450}1r
\put(0,260){\fbox{$(\alpha_0)_f$}}
\put(700,270){\fbox{$\Epsilon^\alpha_B$}}
\efig
\quad=\quad
\bfig
\putmorphism(-150,500)(1,0)[F(A)`F(B)`F(f)]{600}1a
 \putmorphism(450,500)(1,0)[\phantom{F(A)}`G(B) `\alpha_1(B)]{680}1a
 \putmorphism(-150,50)(1,0)[F(A)`G(A)`\alpha_1(A)]{600}1a
 \putmorphism(450,50)(1,0)[\phantom{F(A)}`G(B) `G(f)]{680}1a

\putmorphism(-180,500)(0,-1)[\phantom{Y_2}``=]{450}1r
\putmorphism(1100,500)(0,-1)[\phantom{Y_2}``=]{450}1r
\put(350,260){\fbox{$\delta_{\alpha_1,f}$}}
\putmorphism(-150,-400)(1,0)[F(A')`G(A'). `=]{640}1a

\putmorphism(-180,50)(0,-1)[\phantom{Y_2}``\alpha_0(A)]{450}1l
\putmorphism(450,50)(0,-1)[\phantom{Y_2}``]{450}1l
\putmorphism(610,50)(0,-1)[\phantom{Y_2}``=]{450}0l 
\put(20,-180){\fbox{$\Epsilon^\alpha_A$}} 
\efig
$$
and
$$
\bfig
\putmorphism(-150,0)(1,0)[F(A)`F(A)`=]{600}1a
 \putmorphism(450,0)(1,0)[\phantom{F(A)}`G(B) `F(f)]{640}1a

 \putmorphism(-150,-450)(1,0)[F(A)`G(A)`\alpha_1(A)]{600}1a
 \putmorphism(450,-450)(1,0)[\phantom{F(A)}`G(B) `G(f)]{640}1a

\putmorphism(-180,0)(0,-1)[\phantom{Y_2}``=]{450}1l
\putmorphism(450,0)(0,-1)[\phantom{Y_2}``]{450}1r
\putmorphism(300,0)(0,-1)[\phantom{Y_2}``\alpha_0(A)]{450}0r
\putmorphism(1100,0)(0,-1)[\phantom{Y_2}``\alpha_0(B)]{450}1r
\put(50,-200){\fbox{$\eta^\alpha_A$}}
\put(650,-200){\fbox{$(\alpha_0)_f$}}
\efig
\quad=\quad
\bfig
\putmorphism(-150,0)(1,0)[F(A)`F(B)`F(f)]{600}1a
 \putmorphism(450,0)(1,0)[\phantom{F(A)}`G(B) `\alpha_1(B)]{680}1a
 \putmorphism(-150,-450)(1,0)[F(A)`G(A)`\alpha_1(A)]{600}1a
 \putmorphism(450,-450)(1,0)[\phantom{F(A)}`G(B) `G(f)]{680}1a

\putmorphism(-180,0)(0,-1)[\phantom{Y_2}``=]{450}1r
\putmorphism(1100,0)(0,-1)[\phantom{Y_2}``=]{450}1r
\put(350,-240){\fbox{$\delta_{\alpha_1,f}$}}
\putmorphism(450,450)(1,0)[F(B)`F(B) `=]{640}1a

\putmorphism(450,450)(0,-1)[\phantom{Y_2}``=]{450}1l
\putmorphism(1100,450)(0,-1)[\phantom{Y_2}``\alpha_0(B)]{450}1l 
\put(600,250){\fbox{$\eta^\alpha_B$}} 
\efig
$$
for every 1h-cell $f:A\to B$; 
$$
\bfig
 \putmorphism(-150,500)(1,0)[F(A)`G(A)  `\alpha_1(A)]{600}1a
\putmorphism(-180,500)(0,-1)[\phantom{Y_2}`F(A') `F(u)]{450}1l
\putmorphism(-150,-400)(1,0)[G(A')` `=]{480}1a
\putmorphism(-180,50)(0,-1)[\phantom{Y_2}``\alpha_0(A')]{450}1l
\putmorphism(450,50)(0,-1)[\phantom{Y_2}`G(A')`=]{450}1r
\putmorphism(450,500)(0,-1)[\phantom{Y_2}`G(A') `G(u)]{450}1r
\put(0,260){\fbox{$(\alpha_1)_u$}}
\putmorphism(-180,50)(1,0)[\phantom{F(A)}``\alpha_1(A')]{500}1a
\put(40,-170){\fbox{$\Epsilon^\alpha_{A'}$}}
\efig=
\bfig
 \putmorphism(-150,500)(1,0)[F(A)`F(A) `=]{500}1a
 \putmorphism(450,500)(1,0)[` `\alpha_1(A)]{380}1a
\putmorphism(-180,500)(0,-1)[\phantom{Y_2}`F(A') `F(u)]{450}1l
\put(0,-160){\fbox{$\delta_{\alpha_0,u}$}}
\putmorphism(-150,-400)(1,0)[F(A'')`F(A'') `=]{500}1a
\putmorphism(-180,50)(0,-1)[\phantom{Y_2}``\alpha_0(A')]{450}1l
\putmorphism(380,500)(0,-1)[\phantom{Y_2}` `\alpha_0(A)]{450}1l
\putmorphism(380,50)(0,-1)[G(A)` `G(u)]{450}1r
\put(550,290){\fbox{$\Epsilon^\alpha_A$} }
\putmorphism(940,500)(0,-1)[G(A)`G(A)`=]{450}1r
\putmorphism(470,60)(1,0)[``=]{360}1a
\efig
$$
and 
$$
\bfig
 \putmorphism(-150,500)(1,0)[F(A)`F(A)  `=]{600}1a
\putmorphism(-180,500)(0,-1)[\phantom{Y_2}`F(A) `=]{450}1l
\putmorphism(-150,-400)(1,0)[F(A')` `\alpha_1(A')]{480}1a
\putmorphism(-180,50)(0,-1)[\phantom{Y_2}``F(u)]{450}1l
\putmorphism(450,50)(0,-1)[\phantom{Y_2}`G(A')`G(u)]{450}1r
\putmorphism(450,500)(0,-1)[\phantom{Y_2}`G(A) `\alpha_0(A)]{450}1r
\put(50,300){\fbox{$\eta^\alpha_A$}}
\putmorphism(-180,50)(1,0)[\phantom{F(A)}``\alpha_1(A)]{520}1a
\put(20,-170){\fbox{$(\alpha_1)_u$}}
\efig=
\bfig
 \putmorphism(-400,-400)(1,0)[` `\alpha_1(A')]{380}1a
 \putmorphism(80,500)(1,0)[F(A)`F(A) `=]{500}1a
\putmorphism(-480,50)(0,-1)[F(A')`F(A')`=]{450}1r
\putmorphism(-380,60)(1,0)[``=]{360}1a
\putmorphism(70,500)(0,-1)[\phantom{Y_2}`F(A') `F(u)]{450}1l
\put(220,270){\fbox{$\delta_{\alpha_0,u}$}}
\putmorphism(80,-400)(1,0)[G(A')`G(A') `=]{500}1a
\putmorphism(70,50)(0,-1)[\phantom{Y_2}``\alpha_0(A')]{450}1r
\putmorphism(580,500)(0,-1)[\phantom{Y_2}` `\alpha_0(A)]{450}1r
\putmorphism(580,50)(0,-1)[G(A)` `G(u)]{450}1r
\put(-280,-160){\fbox{$\eta^\alpha_{A'}$} }
\efig
$$
for every 1v-cell $u:A\to A'$. 
\end{cor}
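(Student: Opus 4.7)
The plan is to prove each of the four identities by direct pasting-diagram manipulation: substitute the defining formulas for $(\alpha_1)_u$ and $\delta_{\alpha_1,f}$ given in \prref{vertic->horiz} into the appropriate side, and then collapse the resulting diagram using the two companion zig-zag identities
\[
[\eta^\alpha_A \vert \Epsilon^\alpha_A] = \Id_{\alpha_1(A)}, \qquad \frac{\eta^\alpha_A}{\Epsilon^\alpha_A} = \Id^{\alpha_0(A)},
\]
together with the strict unit laws of $\Bb$.

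The four identities split into two symmetric pairs, governed by the horizontal$\leftrightarrow$vertical symmetry. The first pair concerns a 1h-cell $f\colon A\to B$ and the 2-cell $\delta_{\alpha_1,f}$. For the first identity I would substitute the definition of $\delta_{\alpha_1,f}$ on the right-hand side; the resulting pasting exhibits the vertical stacking of $\eta^\alpha_A$ over $\Epsilon^\alpha_A$ on the leftmost column, so applying $\frac{\eta^\alpha_A}{\Epsilon^\alpha_A}=\Id^{\alpha_0(A)}$ collapses that column to an identity 1v-cell, leaving precisely $(\alpha_0)_f$ pasted with $\Epsilon^\alpha_B$, which is the left-hand side. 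For the second identity the same substitution exhibits instead the horizontal juxtaposition $[\eta^\alpha_B\vert\Epsilon^\alpha_B]$ on the right column, and the identity $[\eta^\alpha_B\vert\Epsilon^\alpha_B]=\Id_{\alpha_1(B)}$ cancels them, producing $\eta^\alpha_A$ next to $(\alpha_0)_f$, as on the left-hand side.

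The second pair, concerning a 1v-cell $u\colon A\to A'$ and the 2-cell $(\alpha_1)_u$, is handled by the symmetric argument: substituting the defining formula for $(\alpha_1)_u$ into the appropriate side of identity 3 (resp.\ 4) produces either the horizontal pair $[\eta^\alpha_{A'}\vert\Epsilon^\alpha_{A'}]$ or the vertical pair $\frac{\eta^\alpha_{A}}{\Epsilon^\alpha_{A}}$, whose collapse via the corresponding companion identity yields the other side.

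The main obstacle is purely bookkeeping: the pasting diagrams are large and several strict-unit reductions must be applied in the correct order to reveal the two cells of a companion pair sitting adjacent to one another. There is, however, no new coherence content to verify --- in particular, the pseudofunctor constraints $F_f, F^A, G_f, G^A$ do not appear explicitly, since they are hidden inside $(\alpha_0)_f$ and $\delta_{\alpha_0,u}$, which enter the argument as opaque input. Hence each of the four identities reduces, after expansion, to exactly one application of a companion equation followed by strict cancellations.
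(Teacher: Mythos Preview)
Your approach is correct and coincides with the paper's: the statement is recorded there as a ``direct corollary'' of \prref{vertic->horiz} with no further argument, and unwinding the definitions of $\delta_{\alpha_1,f}$ and $(\alpha_1)_u$ followed by a single companion zig--zag is exactly the intended verification.

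One small correction of detail: in all four cases it is the \emph{vertical} companion equation $\dfrac{\eta^\alpha_X}{\Epsilon^\alpha_X}=\Id^{\alpha_0(X)}$ that does the work, not a mix of horizontal and vertical as you suggest for identities~2 and~3. For instance, in identity~2 the cell $\eta^\alpha_B$ sits directly above the $\alpha_1(B)$ edge, and after substituting $\delta_{\alpha_1,f}=[\,\eta^\alpha_A\mid(\alpha_0)_f\mid\Epsilon^\alpha_B\,]$ the cell $\Epsilon^\alpha_B$ lies directly below that same edge, so one obtains $\dfrac{\eta^\alpha_B}{\Epsilon^\alpha_B}$ rather than $[\eta^\alpha_B\mid\Epsilon^\alpha_B]$. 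The same happens in identity~3 with $\eta^\alpha_{A'}$ and $\Epsilon^\alpha_{A'}$. This does not affect the validity of your plan; once the diagrams are written out the correct zig--zag is forced.
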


By axioms (6.2) and (6.3) of \cite[Definition 6.1]{Shul1} (which are also valid for pseudo double functors) one has:

\begin{lma}
Let $u_*$ be a 1h-companion of $u$ with the corresponding 2-cells $\eta_u, \Epsilon_u$, and let $F$ be a pseudo double functor. 
Then $F(u)_*=F(u_*)$ where 
$$
\Epsilon_{F(u)}=
\bfig
\putmorphism(-100,250)(1,0)[F(A)`F(A')`F(u_*)]{550}1a

 \putmorphism(-100,-200)(1,0)[F(A')`F(A')`F(id)]{550}1a

\putmorphism(-100,250)(0,-1)[\phantom{Y_2}``]{450}1l
\putmorphism(-70,250)(0,-1)[\phantom{Y_2}``F(u)]{450}0l
\putmorphism(450,250)(0,-1)[\phantom{Y_2}``=]{450}1r
\put(0,40){\fbox{$ F(\Epsilon_u)$}}
\put(70,-430){\fbox{$F_{A'}$}}

\putmorphism(-100,-640)(1,0)[G(A')` `=]{440}1b
\putmorphism(-100,-200)(0,-1)[\phantom{Y_2}``=]{420}1l
\putmorphism(450,-200)(0,-1)[\phantom{Y_2}`G(A')`=]{420}1r

\efig
\qquad\text{and}\qquad
\eta_{F(u)}=
\bfig
 \putmorphism(30,250)(1,0)[F(A)`F(A) `=]{550}1a

 \putmorphism(50,-200)(1,0)[\phantom{F(A)}`F(A), `F(id)]{580}1a

\putmorphism(50,250)(0,-1)[`F(A)`=]{450}1l
\putmorphism(600,250)(0,-1)[\phantom{Y_2}``=]{450}1r
\put(180,-430){\fbox{$F(\eta_u)$}}
\put(180,20){\fbox{$(F^{A})^{-1}$}}

\putmorphism(50,-640)(1,0)[F(A)` `F(u_*)]{440}1b
\putmorphism(50,-200)(0,-1)[\phantom{Y_2}``=]{420}1l
\putmorphism(600,-200)(0,-1)[\phantom{Y_2}`F(A').`F(u)]{420}1r

\efig
$$
\end{lma}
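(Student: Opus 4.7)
The plan is to verify the two companion equations
\[
[\eta_{F(u)}\vert \Epsilon_{F(u)}] \;=\; \Id_{F(u_*)}
\quad\text{and}\quad
\tfrac{\eta_{F(u)}}{\,\Epsilon_{F(u)}\,} \;=\; \Id^{F(u)}
\]
directly from the corresponding identities $[\eta_u\vert\Epsilon_u]=\Id_{u_*}$ and $\frac{\eta_u}{\Epsilon_u}=\Id^{u}$ for the companion pair $(u,u_*)$ in $\Aa$, by pushing everything under $F$ and absorbing the coherence 2-cells.

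For the first identity I would expand the horizontal composite $[\eta_{F(u)}\vert \Epsilon_{F(u)}]$ as defined: it is the $2\times 2$ pasting obtained by stacking $(F^A)^{-1}$ on top of $F(\eta_u)$ on the left and $F(\Epsilon_u)$ on top of $F_{A'}$ on the right. By the interchange law I would regroup it so that the ``outer'' rows read $[(F^A)^{-1}\vert F_{A'}]$ at the boundaries and the ``inner'' row reads $[F(\eta_u)\vert F(\Epsilon_u)]$. Using the horizontal pseudofunctoriality axiom (6.2) of \cite[Definition 6.1]{Shul1}, applied to the composable pair $(\id_A,u_*)$ on top and $(u_*,\id_{A'})$ on bottom, together with the companion identity $[\eta_u\vert\Epsilon_u]=\Id_{u_*}$ inside $\Aa$, the middle row collapses to $F(\Id_{u_*})=\Id_{F(u_*)}$ up to the compositors $F_{\id_A,u_*}$ and $F_{u_*,\id_{A'}}$. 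The remaining outer compositors pair with $(F^A)^{-1}$ and $F_{A'}$; axiom (6.3) (the unit/coherence axiom for a pseudo double functor) precisely states that these cancel to an identity, leaving $\Id_{F(u_*)}$ as required.

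The second identity is symmetric: the vertical composite $\frac{\eta_{F(u)}}{\Epsilon_{F(u)}}$ rearranges, by interchange, to a $2\times 2$ block where the inner column is $\frac{F(\eta_u)}{F(\Epsilon_u)}$ and the outer column consists of $(F^A)^{-1}$ above $F(\Epsilon_u)$ and $F(\eta_u)$ above $F_{A'}$ \emph{reinterpreted in the vertical direction}. Here I would use the vertical half of pseudofunctoriality (the compositor $F^{u,v}$ and the unitor $F^A$) in place of $F_{f,g}$ and $F_A$; by the vertical analogues of (6.2) and (6.3) for $F$ the inner column reduces to $F(\Id^u)=\Id^{F(u)}$ and the outer coherence cells cancel.

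The main obstacle will be purely bookkeeping: keeping straight the four distinct coherence 2-cells $F_{f,g}$, $F^{u,v}$, $F_A$, $F^A$ of the pseudo double functor, and matching each occurrence in the definitions of $\eta_{F(u)}$ and $\Epsilon_{F(u)}$ with the right instance of axiom (6.2) or (6.3). Once the pasting is drawn in the $2\times 2$ form above, each application of a coherence axiom is routine; the only genuinely substantive input is the companion equation in $\Aa$ transported under $F$. No new structure on $\Bb$ is used beyond the pseudofunctor axioms, so the lemma really is, as the preceding sentence in the paper suggests, a direct consequence of (6.2) and (6.3).
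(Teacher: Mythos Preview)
Your approach is the same as the paper's, which simply cites axioms (6.2) and (6.3) of \cite[Definition~6.1]{Shul1} without writing anything further. One small inaccuracy in your write-up: the unit cells $(F_A)^{-1}$ and $F_{A'}$ do not sit in the same row of the pasting (one is top-left, the other bottom-right), so the horizontal composite $[\eta_{F(u)}\vert\Epsilon_{F(u)}]$ is a $3\times 2$ grid after padding with identity 2-cells, not a $2\times 2$ block; the three rows are $[(F_A)^{-1}\vert\Id_{F(u_*)}]$, $[F(\eta_u)\vert F(\Epsilon_u)]$, and $[\Id_{F(u_*)}\vert F_{A'}]$. With that correction your use of (6.2) on the middle row and (6.3) to cancel the unit constraints against the compositors $F_{u_*,\id_A}$ and $F_{\id_{A'},u_*}$ goes through exactly as you describe, and likewise for the vertical identity.
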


\begin{prop} \prlabel{invertible delta}
Given a strong vertical transformation $\alpha_0$ under conditions of \prref{vertic->horiz}. 
Let $u:A\to A'$ be a 1v-cell with a 1h-companion $f=u_*$. Then the inverse of the 2-cell $\delta_{\alpha_1,u_*}$ is given by: 
$$\delta_{\alpha_1,u_*}^{-1}=
\bfig
\putmorphism(-700,500)(1,0)[F(A)`F(A)`=]{530}1a
\putmorphism(-150,500)(0,-1)[\phantom{Y_2}` `F(u)]{450}1l
\putmorphism(-700,500)(0,-1)[\phantom{Y_2}`F(A) `=]{450}1l
\putmorphism(-610,50)(1,0)[``F(u)_*]{360}1b
\put(-630,260){\fbox{$\eta_{F(u)}$}}

 \putmorphism(-40,50)(1,0)[``=]{320}1b

\put(550,30){\fbox{$\delta_{\alpha_0,u}$}}
\putmorphism(-150,-400)(1,0)[F(A')`G(A') `\alpha_1(A')]{520}1a
\putmorphism(-150,50)(0,-1)[F(A')``=]{450}1l
\putmorphism(380,500)(0,-1)[\phantom{Y_2}` `F(u)]{450}1l
\putmorphism(950,500)(0,-1)[\phantom{Y_2}`G(A) `\alpha_0(A)]{450}1r
\putmorphism(380,50)(0,-1)[F(A')` `\alpha_0(A')]{450}1r
\putmorphism(350,500)(1,0)[F(A)`F(A)`=]{600}1a
 \putmorphism(950,500)(1,0)[\phantom{F(A)}`G(A) `\alpha_1(A)]{580}1a
 \putmorphism(1060,50)(1,0)[`G(A)`=]{500}1b

\putmorphism(1520,500)(0,-1)[\phantom{Y_2}``=]{450}1r
\put(1250,260){\fbox{$\Epsilon^\alpha_A$}}
\putmorphism(480,-400)(1,0)[`G(A') `=]{500}1a
\putmorphism(950,50)(0,-1)[\phantom{Y_2}``G(u)]{450}1r
\put(0,-160){\fbox{$\eta^\alpha_{A'}$}}

\putmorphism(1640,50)(1,0)[`G(A')`G(u)_*]{440}1a
\putmorphism(2070,50)(0,-1)[\phantom{Y_2}``=]{450}1r
\putmorphism(1520,50)(0,-1)[\phantom{Y_2}`G(A')`G(u)]{450}1l
\putmorphism(1630,-400)(1,0)[`G(A'). `=]{440}1a
\put(1660,-160){\fbox{$\Epsilon_{G(u)}$}}

\efig
$$
\end{prop}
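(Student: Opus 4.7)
The plan is to verify directly that the 2-cell displayed in the statement, call it $\delta'$, composes vertically with $\delta_{\alpha_1,u_*}$ to the identity 2-cell on $\alpha_1(A)\odot F(u_*)$ on one side and to the identity on $G(u_*)\odot\alpha_1(A')$ on the other. Because $\delta_{\alpha_1,u_*}$ is explicitly built in \prref{vertic->horiz} as a vertical composite of $\eta^\alpha_A$, $(\alpha_0)_{u_*}$ and $\Epsilon^\alpha_A$ (with identities on the outside), the composite $\delta_{\alpha_1,u_*}\odot\delta'$ splits into a top block containing the pair $\eta_{F(u)}/\eta^\alpha_A$ together with $(\alpha_0)_{u_*}$, a middle block containing $\delta_{\alpha_0,u}$, and a bottom block containing $\Epsilon_{G(u)}$ together with $\eta^\alpha_{A'}$ and $\Epsilon^\alpha_A$. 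The task is to collapse this to the identity.

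First I would rewrite the middle block using the fourth identity of \coref{4 identities e-eta} (the one that transforms $\delta_{\alpha_0,u}$ together with the flanking $\eta^\alpha$ into $(\alpha_1)_u$). This converts the middle/lower portion into a block of the form $(\alpha_1)_u$ flanked by the companion 2-cells of $\alpha_0(A)$. Next I would expand $\eta_{F(u)}$ and $\Epsilon_{G(u)}$ using the lemma immediately preceding the proposition (which gives them as $F(\eta_u)$ and $G(\Epsilon_u)$ pre- or post-composed with the coherence 2-cells $(F^A)^{-1}$, $G_{A'}$); the companion identities $[\eta_u\vert\Epsilon_u]=\Id_{u_*}$ and $\frac{\eta_u}{\Epsilon_u}=\Id_u$ inside $\Aa$ together with pseudofunctoriality of $F$ and $G$ then force cancellation of the $F(\eta_u)$--$F$-coherence--$F$-coherence--$F(\Epsilon_u)$ strings, and analogously for $G$. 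What remains is precisely a composite of the companion 2-cells for $\alpha_0(A)$, which equals $\Id_{\alpha_1(A)}$ by the companion axiom $[\eta^\alpha_A\vert\Epsilon^\alpha_A]=\Id_{\alpha_1(A)}$.

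The reverse composition $\delta'\odot\delta_{\alpha_1,u_*}$ is handled by the symmetric argument, using the first two identities of \coref{4 identities e-eta} (the ones involving $(\alpha_0)_f$ with $f=u_*$ on the one hand and $\eta^\alpha_B$, $\Epsilon^\alpha_B$ on the other), together with the same companion cancellations for $F(u)$, $G(u)$ and then for $\alpha_0(A')$ via $\frac{\eta^\alpha_{A'}}{\Epsilon^\alpha_{A'}}=\Id_{\alpha_0(A')}$.

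The main obstacle is the bookkeeping of the pseudo double functor coherence isomorphisms $F_A$, $(F^A)^{-1}$, $G_{A'}$, $(G^{A'})^{-1}$ that are hidden inside the expanded $\eta_{F(u)}$ and $\Epsilon_{G(u)}$: they must be shown to cancel against the corresponding coherence pieces already present inside $\delta_{\alpha_1,u_*}$ (which, recall, uses $(\alpha_0)_{u_*}$ and therefore secretly invokes $F$-- and $G$-coherence whenever one traces through how $\alpha_0$ acts on a companion 1h-cell). This is done using axiom (v) of \cite[Definition 6.1]{Shul1} together with axiom 3) (vertical functoriality) of the vertical analogue of \deref{hor psnat tr}. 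Once this coherence bookkeeping is in place, the remainder is a routine diagram chase driven entirely by the companion identities at three levels: $F(u)$, $G(u)$ and $\alpha_0(\bullet)$.
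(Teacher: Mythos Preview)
Your route is more elaborate than the paper's and is missing the one step that actually does the work. The paper's proof is: apply axiom 1) (pseudonaturality) of the vertical transformation $\alpha_0$ to the companion 2-cells $\eta_u$ and $\Epsilon_u$ of $u$, invoke axiom (6.3) of \cite[Definition 6.1]{Shul1} for the unit coherence of $F$ and $G$, and finish with the $\Epsilon\mbox{-}\eta$ relations. The point is that axiom 1) applied to, say, $\Epsilon_u$ immediately produces an identity tying $(\alpha_0)_{u_*}$, $\delta_{\alpha_0,u}$, $F(\Epsilon_u)$ and $G(\Epsilon_u)$ together; this is the bridge between the two large composites you are trying to collapse.

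Your argument never invokes axiom 1) for $\alpha_0$. You try instead to route through the identities of \coref{4 identities e-eta}, but those relate $(\alpha_0)_f$ to $\delta_{\alpha_1,f}$ and $(\alpha_1)_u$ to $\delta_{\alpha_0,u}$; they do not, by themselves, let you trade $(\alpha_0)_{u_*}$ (sitting inside $\delta_{\alpha_1,u_*}$) for the companion data $\eta_u,\Epsilon_u$ of $u$ inside $\Aa$. The sentence ``the companion identities inside $\Aa$ together with pseudofunctoriality of $F$ and $G$ then force cancellation'' is exactly where this gap lives: after expanding via the preceding lemma, $F(\eta_u)$ sits on the $F$-side and $G(\Epsilon_u)$ on the $G$-side, and no amount of $F$- or $G$-pseudofunctoriality alone connects them. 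What connects them is precisely naturality of $\alpha_0$ with respect to the 2-cells $\eta_u$ and $\Epsilon_u$, i.e.\ axiom 1). Once you insert that step, your coherence bookkeeping collapses to the paper's short argument, and your appeal to axiom (v) and to axiom 3) of the vertical analogue becomes unnecessary (the paper uses (6.3), the unit axiom, rather than (v)).
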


\begin{proof}
Use axiom 1) for $\alpha_0$ and (6.3) of \cite[Definition 6.1]{Shul1}, together with $\Epsilon\x\eta$-relations.  
\end{proof}

\medskip

Note that the above inverse of $\delta_{\alpha_1,u_*}$ is in fact the image of the known pseudofunctor $V\Dd\to H\Dd$ from the 
vertical 2-category to the horizontal one of a given double category $\Dd$ in which all 1v-cells have 1h-companions. For a horizontally 
globular 2-cell $a$ with a left 1v-cell $u$ and 
a right 1v-cell $v$, the image by this functor of $a$ is given by $[\eta_u \vert a\vert\Epsilon_v]$ (horizontal composition of 2-cells).

\begin{rem}
One could start with a vertical transformation $\alpha_0$ (for which $\delta_{\alpha_0,u}=\Id$ for all 1v-cells $u:A\to A'$) 
and define a horizontal transformation $\alpha_1$ setting $\delta_{\alpha_1,f}=\Id$ for all 1h-cells $f:A\to B$ and 
defining $(\alpha_1)_u$  as in \prref{vertic->horiz}. Though, in order for $\alpha_1$ to satisfy the corresponding axiom 1), 
one needs to assume the first two identities of \coref{4 identities e-eta}. 
\end{rem}

\subsection{Embedding $PsDbl^*_3$ into $DblPs$}

It remains to show how to turn vertical transformations into double pseudonatural transformations. We will assume that 
1v-cell components of vertical transformations have 1h-companions. 

Observe that for vertical transformations the 2-cells $\delta_{\alpha_0,u}$ are identities. Moreover,  
we know that vertical transformations are particular cases of vertical pseudonatural transformations (\rmref{hor tr as subcase}), 
and that strong horizontal transformations are particular cases of horizontal pseudonatural transformations. 
By \prref{vertic->horiz} we have that a vertical transformation $\alpha_0$ determines a strong horizontal transformation. 
So far we have axiom (T1) of \deref{double 2-cells}. 
Furthermore, by \prref{invertible delta} we have in particular that for all 1v-cell components $\alpha_0(A)$ of vertical transformations 
the 2-cells $\delta_{\alpha_1,\alpha_1(A)}$ are invertible. Then we have that the axiom (T2) is fulfilled.  
Observe that setting $\Theta^\alpha_A=\Epsilon^\alpha_A$, by the first and third identities in \coref{4 identities e-eta} 
we have a $\Theta$-double pseudonatural transformation between pseudo double functors. Due to \prref{teta->double} we have 
indeed a double pseudonatural transformation, as we wanted. (Actually, thanks to the $\Epsilon\x\eta$-relations, by the first identity in 
\coref{4 identities e-eta}, axiom 1) for the horizontal pseudonatural transformation $\alpha_1$ holds 
if and only if axiom (T3-1) for $t^\alpha_f$ in \prref{teta->double} holds.)

Moreover, we may deduce the following bijective correspondence $t^\alpha_f\leftrightarrow\delta_{\alpha_1,f}$:
$$
t^\alpha_f=
\bfig
\putmorphism(-150,500)(1,0)[F(A)`F(B)`F(f)]{600}1a
 \putmorphism(450,500)(1,0)[\phantom{F(A)}`G(B) `\alpha_1(B)]{680}1a
 \putmorphism(-150,50)(1,0)[F(A)`G(A)`\alpha_1(A)]{600}1a
 \putmorphism(450,50)(1,0)[\phantom{F(A)}`G(B) `G(f)]{680}1a

\putmorphism(-180,500)(0,-1)[\phantom{Y_2}``=]{450}1r
\putmorphism(1100,500)(0,-1)[\phantom{Y_2}``=]{450}1r
\put(350,260){\fbox{$\delta_{\alpha_1,f}$}}
\putmorphism(-150,-400)(1,0)[F(A')`G(A') `=]{640}1a

\putmorphism(-180,50)(0,-1)[\phantom{Y_2}``\alpha_0(A)]{450}1l
\putmorphism(450,50)(0,-1)[\phantom{Y_2}``]{450}1l
\putmorphism(610,50)(0,-1)[\phantom{Y_2}``=]{450}0l 
\put(20,-180){\fbox{$\Epsilon^\alpha_A$}} 
\efig
$$

$$\delta_{\alpha_1,f}=\quad
\bfig
 \putmorphism(-150,250)(1,0)[F(A)`F(A) `=]{500}1a
\put(0,30){\fbox{$\eta^\alpha_A$}}
\putmorphism(380,250)(0,-1)[\phantom{Y_2}` `\alpha_0(A)]{450}1r
\putmorphism(350,250)(1,0)[F(A)`F(B)`F(f)]{600}1a
 \putmorphism(950,250)(1,0)[\phantom{F(A)}`G(B) `\alpha_1(B)]{600}1a
 \putmorphism(470,-200)(1,0)[`G(B)`G(f)]{500}1b
 \putmorphism(1060,-200)(1,0)[`G(B),`=]{500}1b
\putmorphism(1570,250)(0,-1)[\phantom{Y_2}``=]{450}1r
\put(900,30){\fbox{$t^\alpha_f$}}
\putmorphism(-150,-200)(1,0)[F(A)`G(A) `\alpha_1(A)]{520}1a
\putmorphism(-150,250)(0,-1)[``=]{450}1l
\efig
$$
and complete the bijection $t^\alpha_f\leftrightarrow(\alpha_0)_f$:
$$(\alpha_0)_f=
\quad
\bfig
\putmorphism(-150,0)(1,0)[F(A)`F(B)`F(f)]{600}1a
 \putmorphism(450,0)(1,0)[\phantom{F(A)}`G(B) `\alpha_1(B)]{680}1a
 \putmorphism(-150,-450)(1,0)[F(A)`G(A).`G(f)]{1280}1a

\putmorphism(-180,0)(0,-1)[\phantom{Y_2}``=]{450}1r
\putmorphism(1100,0)(0,-1)[\phantom{Y_2}``=]{450}1r
\put(350,-200){\fbox{$t^\alpha_f$}}
\putmorphism(450,450)(1,0)[F(B)`F(B) `=]{640}1a

\putmorphism(450,450)(0,-1)[\phantom{Y_2}``=]{450}1l
\putmorphism(1100,450)(0,-1)[\phantom{Y_2}``\alpha_0(B)]{450}1r
\put(680,250){\fbox{$\eta^\alpha_B$}} 

\efig
$$

\begin{rem}
Given the $\Epsilon\x\eta$-relations, by the properties developed in this and the previous Subsection, 
axiom 1) for the horizontal pseudonatural transformation $\alpha_1$ holds if and only if 
axiom (T3-1) for $t^\alpha_f$ in \prref{teta->double} holds, if and only if 
axiom 1) for the vertical pseudonatural transformation $\alpha_0$ holds. 
\end{rem}

\bigskip

\bigskip

{\bf Acknowledgement.}
I am profoundly thankful to Gabi B\"ohm for helping me 
understand the problem of 
non-fitting of monoids in her monoidal category $Dbl$ into intercategories, for suggesting me to try her $Dbl$ 
as the codomain for the embedding in Section 3, and for many other richly nurturing discussions. 
This research was partly developed during my sabbatical year from the Instituto de Matem\'atica Rafael Laguardia of the 
Facultad de Ingeneir\'ia of the Universidad de la Rep\'ublica in Montevideo (Uruguay). My thanks to ANII and PEDECIBA Uruguay 
for financial support. The work was also supported by the Serbian Ministry of Education, Science and
Technological Development through Mathematical Institute of the Serbian Academy of Sciences and Arts. 

\end{document}